\def\longformule#1#2{
\displaylines{ \qquad{#1} \hfill\cr \hfill {#2} \qquad\cr } }
\def\inte#1{
\displaystyle\mathop{#1\kern0pt}^\circ }
\let\pa=\partial
\let\al=\alpha
\let\d=\partial
\let\e=\varepsilon
\let\r=\rho
\let\s=\sigma
\let\f=\frac
\let\p=\psi
\let\om=\omega
\let\D=\Delta
\let\Om=\Omega
\let\wt=\widetilde
\def\cB{{\mathcal B}}
\def\cC{{\mathcal C}}
\def\cF{{\mathcal F}}
\def\cH{{\mathcal H}}
\def\cM{{\mathcal M}}
\def\cS{{\mathcal S}}
\def\cW{{\mathcal W}}
\def\grad{\nabla}
\def\la{\lambda}
\def\ov{\overline}
\renewcommand{\div}{{\rm div}\,}
\newcommand{\loc}{{\rm loc}\,}
\newcommand{\Id}{{\rm Id}\,}
\newcommand{\Supp}{{\rm Supp}\,}
\newcommand\ds{\displaystyle}
\newcommand{\Prod}{\displaystyle \prod}
\def\dH{\dot{H}}
\def\dB{\dot{B}}
\def\h{\frak h}
\def\virgp{\raise 2pt\hbox{,}}
\def\cdotpv{\raise 2pt\hbox{;}}
\def\eqdefa{\buildrel\hbox{\footnotesize def}\over =}
\def\Id{\mathop{\rm Id}\nolimits}
\def\C{\mathop{\mathbb C\kern 0pt}\nolimits}
\def\DD{\mathop{\mathbb D\kern 0pt}\nolimits}
\def\EE{\mathop{{\mathbb E \kern 0pt}}\nolimits}
\def\K{\mathop{\mathbb K\kern 0pt}\nolimits}
\def\N{\mathop{\mathbb N\kern 0pt}\nolimits}
\def\Q{\mathop{\mathbb Q\kern 0pt}\nolimits}
\def\R{\mathop{\mathbb R\kern 0pt}\nolimits}
\def\SS{\mathop{\mathbb S\kern 0pt}\nolimits}
\def\ZZ{\mathop{\mathbb Z\kern 0pt}\nolimits}
\def\TT{\mathop{\mathbb T\kern 0pt}\nolimits}
\def\PP{\mathop{\mathbb P\kern 0pt}\nolimits}
\newcommand{\Z}{{\ZZ}}
\def\dive{\mathop{\rm div}\nolimits}
\def\Supp{\mathop{\rm Supp}\nolimits\ }
\def\no{\noindent}
\def\na{\nabla}
\def\p{\partial}
\def\h{{\rm h}}
\def\v{{\rm v}}
\def\th{\theta}
\newcommand{\beq}{\begin{equation}}
\newcommand{\eeq}{\end{equation}}
\newcommand{\ben}{\begin{eqnarray}}
\newcommand{\een}{\end{eqnarray}}
\newcommand{\beno}{\begin{eqnarray*}}
\newcommand{\eeno}{\end{eqnarray*}}
\newcommand{\andf}{\quad\hbox{and}\quad}
\newcommand{\with}{\quad\hbox{with}\quad}
\newtheorem{defi}{Definition}[section]
\newtheorem{thm}{Theorem}[section]
\newtheorem{lem}{Lemma}[section]
\newtheorem{rmk}{Remark}[section]
\newtheorem{prop}{Proposition}[section]
\begin{document}
\title[Global regularity of 2-D density patch]
{On the global  regularity of 2-D density patch for inhomogeneous
incompressible viscous flow}
 \author[X. Liao]{Xian Liao}
\address [X. Liao]%
{Academy of Mathematics $\&$ Systems Science and  Hua Loo-Keng Key
Laboratory of Mathematics, The Chinese Academy of Sciences, CHINA}
\email{xian.liao@amss.ac.cn}
\author[P. ZHANG]{Ping Zhang}%
\address[P. Zhang]
 {Academy of
Mathematics $\&$ Systems Science and  Hua Loo-Keng Key Laboratory of
Mathematics, The Chinese Academy of Sciences, CHINA}
\email{zp@amss.ac.cn}

\date{3/17/2015}
\maketitle

\begin{abstract} Toward P.-L. Lions' open question  in \cite{Lions96} concerning the propagation of  regularity for
density patch, we establish  the global existence of solutions to
the 2-D inhomogeneous incompressible Navier-Stokes system with
initial density given by $(1-\eta){\bf 1}_{\Om_0}+{\bf 1}_{\Om_0^c}$
for some small enough constant $\eta$ and  some $W^{k+2,p}$ domain
$\Om_0,$  and with initial vorticity belonging to $L^1\cap L^p$ and
with appropriate tangential regularities. Furthermore, we  prove
that the regularity of
 the domain $\Om_0$ is preserved by time evolution.
\end{abstract}

\noindent {\sl Keywords:}  Inhomogeneous incompressible
Navier-Stokes equations, density patch,\par $\qquad\quad\ \ $
striated distributions,  Littlewood-Paley Theory

\noindent {\sl AMS Subject Classification (2000):} 35Q30, 76D03  \

 \setcounter{equation}{0}
\section{Introduction}

We  consider the following inhomogeneous incompressible
Navier-Stokes equations in two space dimension:
\begin{equation}\label{inNS}
 \left\{\begin{array}{l}
\displaystyle \d_t\rho+\div(\rho v)=0,\qquad (t,x)\in\R^+\times\R^2,\\
\displaystyle\d_t(\rho v)+\dive(\r v\otimes v)-\Delta v+\nabla\pi=0,\\
\displaystyle \div v=0,\\
\displaystyle  (\r, v)|_{t=0}=(\r_0, v_0)
\end{array}\right.
\end{equation}
where $\rho\in\R^+,$ $v\in\R^2$ and $\pi\in\R$ stand for the
density, velocity field  and pressure of the fluid, respectively.
 The
above system describes a  fluid that is incompressible but has
nonconstant density. Basic examples are mixture of incompressible
and non reactant flows, flows with complex structure (e.g. blood
flow or model of rivers), fluids containing a melted substance, etc.
 \smallbreak
 A lot of recent works have been dedicated to the mathematical study
of the above system. Global weak solutions with finite energy have
been constructed by J. Simon in \cite{Simon} (see also the book by
P.-L. Lions \cite{Lions96} for the variable viscosity case). In the
case of smooth data with no vacuum, the existence of strong unique
solutions goes back to the work of O. Ladyzhenskaya and V.
Solonnikov in \cite{LS}. More recently, R. Danchin \cite{D1}
established the well-posedness of the above system in the whole
space $\R^d$ in the so-called \emph{critical functional framework}
for small perturbations of some positive constant density. The basic
idea is to use functional spaces (or norms) that have the same
\emph{scaling invariance} as \eqref{inNS}, namely
\begin{equation*}
(\rho,v,\pi)(t,x)\longmapsto (\rho,\lambda v,\lambda^2\pi)
(\lambda^2 t,\lambda x),\qquad (\rho_0,v_0)(x)\longmapsto
(\rho_0,\lambda v_0)(\lambda x).
\end{equation*}
More precisely, in \cite{D1}, global well-posedness was established
assuming that for some small enough constant $c,$ one has
$$\|\rho_0-1\|_ {\dot B^{\frac d2}_{2,\infty}(\R^d)\cap L^\infty(\R^d)}+\|v_0\|_{\dot B^{-1+\frac d2}_{2,1}(\R^d)}\leq c.$$
Above $\dot B^{\sigma}_{p,r}(\R^d)$ stands for a homogeneous Besov
space on $\R^d.$
 This result
was extended to more general Besov spaces  by H. Abidi in
\cite{Abidi}, and by H. Abidi and M. Paicu in \cite{AP}, and to more
general smallness condition by the second author and M. Paicu in
\cite{PZ2}. The smallness assumption on the initial density was
removed recently in \cite{AGZ2,AGZ3}.

\smallbreak

 Given that in all those works the density has to be at
least in the Besov space $\dot B^{\f dp}_{p,\infty}(\R^d),$
  one cannot capture  discontinuities  across an hypersurface. In effect,
  the Besov regularity of the characteristic function of a smooth domain is only $\dot B^{\f1p}_{p,\infty}(\R^d).$
  Therefore, those results do not apply to a  mixture of two fluids with different densities.

\smallbreak In \cite{DM12},  R. Danchin and P. Mucha proved the
global existence and uniqueness  of solutions to \eqref{inNS} for
any data $(\rho_0,u_0)$ such that for some $p\in[1,2d[$ and small
enough constant $c,$ there holds
\begin{equation}
\label{eq:small} \|\rho_0-1\|_{\cM(\dot B^{-1+\frac
dp}_{p,1}(\R^d))}+\|v_0\|_{\dot B^{-1+\frac dp}_{p,1}(\R^d)}\leq c.
\end{equation}
Above, $\|\cdot\|_{\cM(\dot B^{-1+\frac dp}_{p,1}(\R^d))}$ denotes
the \emph{multiplier norm} associated to the Besov space $\dot
B^{-1+\frac dp}_{p,1}(\R^d),$ which turns out to be finite for
characteristic functions of $C^1$ domains whenever $p>d-1.$
Therefore, initial densities with a discontinuity across an
interface may be considered (although the jump has to be small owing
to \eqref{eq:small}). In fact, even  $\rho_0$ is only bounded and
bounded away from zero,  J. Huang, M. Paicu and the second author in
\cite{HPZ} could solve the global existence of solutions to
\eqref{inNS}  in a critical functional framework, and uniqueness was
obtained if assuming slightly more regularity for the velocity
field. A half space setting of this problem was solved by R. Danchin
and the second author  lately in \cite{DZ14}.

 \smallbreak   A natural question to ask is whether it is still possible to
 propagate higher order regularities of the interface of the fluids.
 Especially, P.-L. Lions proposed the following open question in
 \cite{Lions96}: suppose the initial density $\r_0=1_{D}$ for some
 smooth domain $D,$ Theorem 2.1 of \cite{Lions96} yields at least
 one global weak solution $(\r,v)$ of \eqref{inNS} such that for all
 $t\geq 0,$ $\r(t)=1_{D(t)}$ for some set $D(t)$ with ${\rm vol}(D(t))={\rm vol}(D).$
 Then whether or not the regularity of $D$ is preserved by the time
 evolution. Since this problem is very sophisticated due to the
 appearance of vacuum, as a first step toward this question, here we aim at
 solving the global well-posedness of \eqref{inNS} with  the initial
density $\r_0(x)=(1-\eta){\bf 1}_{\Om_0}+{\bf 1}_{\Om_0^c} $ for
some simply connected $W^{k+2,p}(\R^2)$ domain $\Om_0$  and  $\eta$
being sufficiently small.

More precisely, let  $\Omega_0$ be a simply connected $W^{k+2,p},$
for $k\geq 1,$ $p\in ]2,4[,$ bounded domain. Let $f_0\in
W^{k+2,p}(\R^2)$ such that $ \d\Omega_0=f_0^{-1}(\{0\})$ and $\nabla
f_0 $ does not vanish on $\d\Omega_0. $ Then we can parametrize
$\d\Omega_0$ as \beq\label{S3eq19} \gamma_0: \cS^1\mapsto \d\Omega_0
\hbox{ via } s\mapsto \gamma_0(s) \hbox{ with
}\d_s\gamma_0(s)=\nabla^\bot f_0(\gamma_0(s)). \eeq We take the
initial density $\r_0$ and the initial vorticity $\omega_0$ of
\eqref{inNS} as follows \beq\label{S3eq18}
\begin{split}
 &  {\rho_0}={(1-\eta)\bf 1}_{\Om_0}+{\bf 1}_{\Om_0^c}, \andf \om_0\in L^1(\R^2) \with \p_{X_0}^\ell\om_0\in
 W^{-\f{\ell}k\e,p}(\R^2)
    \end{split}\eeq
for some $\e\in ]0,1[, \ell=0,\cdots, k,$ and  $X_0=\na^\perp f_0,$
$\p_{X_0}\om_0\eqdefa X_0\cdot\na\om_0.$ \smallbreak

The main motivation for us to solve the above problem comes from
\cite{Chemin88, Chemin91, Chemin93, Chemin98}. Let us mention that
by using the idea of conormal distributions or striated
distributions, J.-Y. Chemin \cite{Chemin91, Chemin93} (see also
\cite{BC93}) proved the global regularities for the vortex patch
problem of 2-D incompressible Euler system. One may also check the
following papers as extensions of the results established in
\cite{Chemin91, Chemin93}: P. Gamblin and X. Saint Raymond
\cite{GS95} and the second author and Q. Qiu \cite{ZQ} for the 3-D
extensions, and  R. Danchin \cite{Danchin97} and T. Hmidi
\cite{Hmidi05, Hmidi06} for the viscous extensions.
 We mention
that the tools in \cite{Chemin88, Chemin91, Chemin93, Chemin98} will
 play an essential role in this paper.

\smallbreak The main result of this paper states as follows:

\begin{thm}\label{thmmain}
{\sl Let the initial data $(\r_0, v_0)$ be given by \eqref{S3eq18}
for $\eta$ sufficiently small. Then \eqref{inNS} has a unique global
solution $(\r, v)$ so that $\r(t,x)={(1-\eta)\bf 1}_{\Om(t)}+{\bf
1}_{\Om(t)^c}$ for some    simply connected $W^{k+2,p}$ domain
$\Om(t).$ }
\end{thm}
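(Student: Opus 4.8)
\no The plan is to split the proof into an \emph{analytic} part (global existence, uniqueness and smoothing of $v$ in a critical functional framework) and a \emph{geometric} part (propagation of the patch structure and of the $W^{k+2,p}$ regularity of its boundary), the bridge between them being the \emph{striated regularity} of the vorticity along the flow--transported tangent field $X(t)$. Because the jump $\eta$ is small, $\rho_0$ satisfies a smallness condition of the type \eqref{eq:small} --- the multiplier norm of ${\bf 1}_{\Om_0}$ is finite since $\Om_0$ is at least $C^1$ and $p>1$ --- so the theory recalled in the introduction, e.g. \cite{DM12} or \cite{HPZ}, already yields a unique global solution $(\rho,v)$. As $\dive v=0$, the density equation is pure transport, $\partial_t\rho+v\cdot\nabla\rho=0$, whence $\rho(t)=\rho_0\circ\psi_t^{-1}$ for $\psi_t$ the flow of $v$; putting $\Om(t)\eqdefa\psi_t(\Om_0)$ gives at once $\rho(t,x)=(1-\eta){\bf 1}_{\Om(t)}+{\bf 1}_{\Om(t)^c}$ and, by incompressibility, conservation of volume. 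Everything then reduces to showing that $\Om(t)$ stays a simply connected $W^{k+2,p}$ domain.

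\no I would next transport the geometry along the flow. Let $f(t)\eqdefa f_0\circ\psi_t^{-1}$ solve $\partial_t f+v\cdot\nabla f=0$, so $\partial\Om(t)=f(t)^{-1}(\{0\})$, and let $X(t)$ be the push-forward of $X_0=\nabla^\perp f_0$, which solves $\partial_t X+v\cdot\nabla X=\partial_X v$ with $\partial_X\eqdefa X\cdot\nabla$. A direct computation gives $[\partial_t+v\cdot\nabla,\,\partial_X]=0$; combined with $\dive v=0$ this keeps $X(t)=\nabla^\perp f(t)$ divergence free and tangent to $\partial\Om(t)$. The $W^{k+2,p}$ regularity of $\partial\Om(t)$ is then equivalent to $X(t)\in W^{k+1,p}$ together with $X(t)$ non-degenerate on $\partial\Om(t)$; equivalently one controls the boundary parametrization $\gamma(t)$ of \eqref{S3eq19}, which obeys $\partial_s\gamma(t,s)=X(t,\gamma(t,s))$ and hence gains one derivative over $X(t)$.

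\no The heart of the argument is the propagation of the striated regularity of $\omega=\curl v$. In the inhomogeneous case $\omega$ is no longer merely transported: the curl of the momentum equation is a transport--diffusion equation whose forcing involves the pressure and $\nabla(1/\rho)$. The decisive structural point is that $\nabla(1/\rho)$ is a singular distribution carried by $\partial\Om(t)$ and directed normally, hence \emph{conormal} with respect to $\partial\Om(t)$: it stays regular under the tangential field $X=\nabla^\perp f$. Thus applying $\partial_X^\ell$, $\ell=0,\dots,k$, to the vorticity equation does not worsen the density singularity, and the commutators of $\partial_X$ with the transport--diffusion operator and with the nonlocal pressure operator can be treated by Chemin-type paraproduct and commutator estimates in a striated Besov/Sobolev scale, as in \cite{Chemin91, Chemin93}. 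Using time weights adapted to the negative indices $-\frac\ell k\varepsilon$ of \eqref{S3eq18} to compensate, through parabolic smoothing, for the low regularity of the data ($\omega_0\in L^1\cap L^p$ only), I would close by a continuity/bootstrap argument uniform-in-time bounds for $\partial_X^\ell\omega$ in the spaces of \eqref{S3eq18} and for $X(t)$ in $W^{k+1,p}$. The crucial by-product is that this striated control upgrades the Biot--Savart estimate so that $\nabla v$ stays (log-)Lipschitz, which makes $\psi_t$ a $W^{k+2,p}$-preserving diffeomorphism and feeds consistently back into the transport estimates for $f(t)$ and $X(t)$.

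\no Assembling these steps shows that $\Om(t)$ remains a simply connected $W^{k+2,p}$ domain for all $t\geq0$, uniqueness being inherited from the critical framework. The main obstacle is exactly the coupling in the third step: unlike the constant-density viscous vortex patch, the vorticity equation here carries the singular density-jump source $\nabla(1/\rho)$, and one must ensure that its interaction with the tangential derivatives $\partial_X^\ell$ and with the nonlocal pressure does not break the striated regularity. Arranging the estimates so that the second-order commutator $[\partial_X,\Delta]$ produced by the viscosity is absorbed --- rather than costing a derivative in the bad (normal) direction --- is the technical crux on which the whole bootstrap rests.
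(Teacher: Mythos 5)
The central step of your plan --- propagating striated regularity through the \emph{vorticity} equation --- is the step that fails, and it is precisely the obstruction that forces the paper onto a different route. For the inhomogeneous system, taking the curl of the momentum equation written as $\partial_t v+v\cdot\nabla v+(1+a)(\nabla\pi-\Delta v)=0$, with $a\eqdefa 1/\rho-1$, produces the source terms $\nabla^\perp a\cdot\Delta v$ and $\nabla^\perp a\cdot\nabla\pi$. Since $a$ jumps across $\partial\Om(t)$, $\nabla^\perp a$ is a measure carried by the interface, while $\Delta v$ and $\nabla\pi$ only lie in $L^{r}_t(L^p)$ and have no trace on $\partial\Om(t)$: these products are simply not defined at the available regularity, and one cannot instead split $\curl\bigl((1+a)\Delta v\bigr)$ as $(1+a)\Delta\om$ plus a remainder, because $a\Delta\om$ (a discontinuous $L^\infty$ function times a distribution of negative order) is meaningless. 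Saying that $\nabla(1/\rho)$ is ``conormal'' does not cure this: the difficulty is not differentiating $\nabla a$ tangentially, it is multiplying it by quantities with no interface regularity. The paper therefore never takes the curl. It applies $\partial_X^\ell$ directly to the momentum equation (see \eqref{S3eq8} and \eqref{S5eq9}): the density then enters only through $a$ and $\partial_X^\ell a$, which remain bounded because $\partial_X$ commutes with $D_t=\partial_t+v\cdot\nabla$ (see \eqref{S3eq1}--\eqref{S3eq2}), so no normal derivative of $a$ is ever produced; the viscous commutator $[\Delta;\partial_X]v=\Delta X\cdot\nabla v+2\sum_{i}\partial_iX\cdot\nabla\partial_iv$ involves only functions; and the pressure is handled through the divergence-form elliptic equation $\dive\bigl((1+a)\nabla\partial_X^\ell\pi\bigr)=\dive G_\ell$ of \eqref{S5eq13a}. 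The whole scheme is closed by maximal $L^{r}_t(L^p)$ regularity of the heat semigroup, the smallness of $\|a_0\|_{L^\infty}$ being used to absorb $a\Delta\partial_X^\ell v$ and $a\nabla\partial_X^\ell\pi$. Your proposal transplants the Euler/viscous vortex-patch scheme of Chemin--Danchin--Hmidi, where the diffusion (if any) has constant coefficients; the ``technical crux'' you defer to the end is exactly the point where that scheme breaks here.

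Two further corrections. First, your reduction ``$\partial\Om(t)\in W^{k+2,p}$ is equivalent to $X(t)\in W^{k+1,p}$'' targets the wrong quantity: full Sobolev regularity of $X$ in all directions is \emph{not} propagated (the velocity has only $W^{2,p}$-type spatial regularity at each time, so $X(t)$ itself stays merely in $W^{2,p}$), and the paper never claims it. What propagates is the striated regularity $\partial_X^{\ell-1}X\in L^\infty_\loc(W^{2,p})$, $1\le\ell\le k$, and the boundary regularity is then read off from the identities $\partial_s^{k}(\psi(t,\gamma_0(s)))=(\partial_{X_0}^{k}\psi)(t,\gamma_0(s))$ and \eqref{claim:X}, i.e. from regularity \emph{along} the patch boundary only. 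Second, invoking \cite{DM12} for the analytic part needs care, since that theorem also requires the velocity to be small, which is not the case here (only $\eta$ is small; $v_0$ is large); \cite{HPZ} is the viable citation, but the paper in any case reproves global existence (Lions' weak solutions, the a priori estimates of Sections \ref{S4}--\ref{S6}, and compactness) because the striated bounds must be established uniformly along the approximation, and it obtains uniqueness by the Lagrangian method of \cite{DM12,HPZ}. Relatedly, the Lipschitz bound on $v$ comes from maximal regularity via $\|\nabla v\|_{L^\infty}\lesssim\|\nabla v\|_{L^p}^{1-\f2p}\|\Delta v\|_{L^p}^{\f2p}$ integrated in time, not from a Biot--Savart estimate with striated vorticity; a merely log-Lipschitz velocity would not suffice to keep the flow, hence the boundary, in $W^{k+2,p}$.
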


\begin{rmk} For $\Om_0$ given by \eqref{S3eq19}, it is easy to
observe that $\p_{X_0}^\ell {\bf 1}_{\Om_0}(x)=0$ for any $\ell\in
\{1,\cdots, k\}.$ Hence in particular, Theorem \ref{thmmain} ensures
the global well-posedness of \eqref{inNS} with initial density
$\r_0=(1-\eta){\bf 1}_{\Om_0}+{\bf 1}_{\Om_0^c}$ and initial
vorticity $\om_0={\bf 1}_{\Om_0}(x).$ Furthermore, the geometric
structure of $\Om_0$ will be persisted for all time. \end{rmk}

\begin{rmk} \begin{itemize}

\item[(1)] Compared with the vector field method in \cite{Chemin88, Chemin91, Chemin93,
Chemin98}, here new difficulties arise due to the non-commutative
property of the vector field with the viscosity term and the
estimate of the pressure function.

\item[(2)] Furthermore due to  the transport equation of the
density function in \eqref{inNS}, the estimate of the pressure
function is more subtle than that in \cite{Danchin97,Hmidi05,
Hmidi06}.

\item[(3)] Technically since we shall use the maximal regularity estimate
for evolutionary  Stokes operator, we shall use striated
distributions in the framework of the Sobolev space, $W^{k,p},$
which is of Triebel-Lizorkin type space, yet the striated
distributions in the previous papers are in the framework of Besov
type spaces.

\end{itemize}
\end{rmk}

\setcounter{equation}{0}
\section{Basic facts on Littlewood-Paley analysis}\label{appa}

For the convenience of the reader, we recall some basic facts on
Littlewood-Paley theory, and one may check Chapter 2 of \cite{BCD}
for more details.
 Let us briefly explain how it may be built in the
case $x\in\R^d$ (see e.g. \cite{BCD}). Let $\chi,\varphi$ be smooth
functions  supported in the ball $\mathcal{B}\eqdefa \{ \xi\in\R^d,
|\xi|\leq\frac{4}{3}\}$ and the ring $\mathcal{C}\eqdefa \{
\xi\in\R^d,\frac{3}{4}\leq|\xi|\leq\frac{8}{3}\}$ respectively, such
that
\begin{equation*}
 \sum_{j\in\Z}\varphi(2^{-j}\xi)=1 \quad\hbox{for}\quad \xi\neq 0\andf \chi(\xi)+\sum_{j\geq 0}
 \varphi(2^{-j}\xi)=1 \quad\hbox{for}\quad \xi\in\R^d.
\end{equation*}
Then for $u\in{\mathcal S}'(\R^d),$ we set
\begin{equation}\label{dydic}
\begin{split}
&\forall j\in\Z,\quad \dot\Delta_j u=\varphi(2^{-j}D)
u\hspace{1cm}\mbox{and} \hspace{1cm} \dot S_ju=\chi(2^{-j}D)u,\\
&\forall j\in\N,\quad \Delta_j u=\varphi(2^{-j}D) u,\quad
\Delta_{-1} u=\chi(D) u\quad  \mbox{and} \quad S_ju=\sum_{j'\leq
j-1}\D_{j'}u.
\end{split}
\end{equation}
 The dyadic blocks satisfies
the property of almost orthogonality:
\begin{equation}\label{Pres_orth}
\dot\Delta_k\dot\Delta_j u\equiv 0 \quad\mbox{if}\quad| k-j|\geq 2
\quad\mbox{and}\quad \dot\Delta_k( \dot S_{j-1}u\dot \Delta_j u)
\equiv 0\quad\mbox{if}\quad| k-j|\geq 5.
\end{equation}

We first recall  the definition of homogeneous Besov spaces.
\begin{defi}\label{def1.1}
{\sl  Let $(p,r)\in[1,+\infty]^2,$ $s\in\R$ and $u\in{\mathcal
S}_h'(\R^d),$ which means that $u$ is in~$\cS'(\R^d)$ and
satisfies~$\ds\lim_{j\to-\infty}\|\dot S_ju\|_{L^\infty}=0$. We set
$$
\|u\|_{\dot B^s_{p,r}}\eqdefa \Big\| \Bigl(2^{js}\|\dot\Delta_j
u\|_{L^{p}}\Bigr)\Big\|_{\ell ^{r}}.
$$
\begin{itemize}

\item
For $s<\frac{d}{p}$ (or $s=\frac{d}{p}$ if $r=1$), we define $\dot
B^s_{p,r}(\R^d)\eqdefa \big\{u\in{\mathcal S}'(\R^d)\;\big|\; \|
u\|_{\dot B^s_{p,r}}<\infty\big\}.$

\item
If $k\in\N$ and $\frac{d}{p}+k\leq s<\frac{d}{p}+k+1$ (or
$s=\frac{d}{p}+k+1$ if $r=1$), then $\dot B^s_{p,r}(\R^d)$ is
defined as the subset of distributions $u\in{\mathcal S}'(\R^d)$
such that $\partial^\beta u\in\dot B^{s-k}_{p,r}(\R^d)$ whenever
$|\beta|=k.$
\end{itemize}}
\end{defi}

We also need the following version of Bernstein Lemma:

\begin{lem}\label{lem2.1} {\sl Let $\cB$ be a ball   and $\cC$ a ring of $\R^d.$
 A constant $C$ exists so
that for any positive real number $\lambda$, any non negative
integer $k$, any smooth homogeneous function $\sigma$ of degree $m,$
and any couple of real numbers $(a, \; b)$ with $ b \geq a \geq 1,$
there hold
\begin{equation}
\begin{split}
&\Supp \hat{u} \subset \lambda \mathcal{B} \Rightarrow
\sup_{|\alpha|=k} \|\pa^{\alpha} u\|_{L^{b}} \leq  C^{k+1}
\lambda^{k+ d(\frac{1}{a}-\frac{1}{b} )}\|u\|_{L^{a}},\\
& \Supp \hat{u} \subset \lambda \mathcal{C} \Rightarrow
C^{-1-k}\lambda^{ k}\|u\|_{L^{a}}\leq
\sup_{|\alpha|=k}\|\partial^{\alpha} u\|_{L^{a}} \leq
C^{1+k}\lambda^{ k}\|u\|_{L^{a}},\\
& \Supp \hat{u} \subset \lambda \mathcal{C} \Rightarrow \|\sigma(D)
u\|_{L^{b}}\leq C_{\sigma, m} \lambda^{ m+d(\frac{1}{a}-\frac{1}{b}
)}\|u\|_{L^{a}}\\
& \Supp \hat{u} \subset \lambda \mathcal{C} \Rightarrow
 \|\textrm{e}^{t\Delta}
u\|_{L^{a}}\leq C \textrm{e}^{-c\lambda^2 t}\|u\|_{L^{a}}.
\end{split}\label{2.1}
\end{equation}}
\end{lem}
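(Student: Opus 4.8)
The plan is to prove the four inequalities one at a time, in each case reducing the claim to Young's convolution inequality applied to a rescaled kernel whose $L^c$ norm is computed by a single change of variables. Throughout I fix a smooth compactly supported cutoff equal to $1$ on the relevant set, so that $u$ is unchanged under the corresponding Fourier multiplier.

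First I would treat the two forward estimates, namely the first line and the upper bound of the second line. Choosing $\phi$ with $\phi\equiv 1$ on $\mathcal B$ (resp.\ on $\mathcal C$) and $\phi$ compactly supported, one has $u=\phi(\lambda^{-1}D)u$ whenever $\Supp\hat u\subset\lambda\mathcal B$ (resp.\ $\lambda\mathcal C$), so that for $|\alpha|=k$ we may write $\partial^\alpha u=g_{\alpha,\lambda}\ast u$ with $g_{\alpha,\lambda}=\mathcal F^{-1}\big[(i\xi)^\alpha\phi(\lambda^{-1}\xi)\big]$. A change of variables gives $g_{\alpha,\lambda}(x)=\lambda^{k+d}g_\alpha(\lambda x)$ for a fixed integrable $g_\alpha$, whence $\|g_{\alpha,\lambda}\|_{L^c}=\lambda^{k+d(1-1/c)}\|g_\alpha\|_{L^c}$. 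Young's inequality with $1/c=1-1/a+1/b$ converts $1-1/c$ into $1/a-1/b$ and yields the stated power $\lambda^{k+d(1/a-1/b)}$; the constant $C^{k+1}$ comes from bounding $\|g_\alpha\|_{L^c}$ by a number of derivatives of $\phi$ that grows linearly in $k$.

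The reverse inequality (the lower bound of the second line) is the one genuinely algebraic step. I would exploit that on a ring $|\xi|$ is bounded away from $0$, so the symbols $g_\alpha(\xi)=\binom{k}{\alpha}\,\overline{\xi^\alpha}\,\phi(\xi)/(i^k|\xi|^{2k})$ are smooth, and by the multinomial identity $\sum_{|\alpha|=k}\binom{k}{\alpha}|\xi^\alpha|^2=|\xi|^{2k}$ they satisfy $\sum_{|\alpha|=k}g_\alpha(\xi)(i\xi)^\alpha=\phi(\xi)$. Applying this with $\xi$ replaced by $\lambda^{-1}\xi$ produces the representation $u=\lambda^{-k}\sum_{|\alpha|=k}g_\alpha(\lambda^{-1}D)\partial^\alpha u$; since each $g_\alpha(\lambda^{-1}D)$ is convolution against a kernel of $\lambda$-independent $L^1$ norm, Young's inequality delivers $\lambda^k\|u\|_{L^a}\le C^{k+1}\sup_{|\alpha|=k}\|\partial^\alpha u\|_{L^a}$.

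The estimate for $\sigma(D)$ follows the forward scheme verbatim: with $\psi\equiv1$ on $\mathcal C$, the kernel of $\sigma(D)\psi(\lambda^{-1}D)$ rescales as $\lambda^{m+d}h(\lambda\cdot)$ by homogeneity of degree $m$ (here $\sigma\psi$ is smooth and compactly supported, hence $h$ is Schwartz), and Young gives the exponent $m+d(1/a-1/b)$. Finally, for the heat flow I would factor $e^{-t|\xi|^2}=e^{-c\lambda^2 t}\,e^{-t(|\xi|^2-c\lambda^2)}$ on $\lambda\mathcal C$, choosing $c<(3/4)^2$ so that the second factor has a nonnegative exponent there. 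After rescaling $\xi=\lambda\eta$ and setting $\tau=\lambda^2 t$, the remaining multiplier is $e^{-\tau(|\eta|^2-c)}\psi(\eta)$, and the crux — the one step I expect to cost real work — is the uniform bound $\sup_{\tau\ge0}\big\|\mathcal F^{-1}[e^{-\tau(|\eta|^2-c)}\psi(\eta)]\big\|_{L^1}<\infty$. I would obtain it by the standard device of estimating the kernel and $|x|^{2N}$ times the kernel through at most $2N$ $\eta$-derivatives of the compactly supported multiplier, checking that these derivatives remain bounded uniformly in $\tau\ge0$ because each differentiation brings down powers of $\tau$ that are absorbed by the decay forced by $|\eta|^2-c\ge\mathrm{const}>0$ on $\Supp\psi$. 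Young's inequality then gives $\|e^{t\Delta}u\|_{L^a}\le Ce^{-c\lambda^2 t}\|u\|_{L^a}$.
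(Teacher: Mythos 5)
Your proof is correct, and it is the standard argument (rescaling to reduce to unit-scale Fourier multipliers, Young's inequality, the multinomial identity $\sum_{|\alpha|=k}\binom{k}{\alpha}\xi^{2\alpha}=|\xi|^{2k}$ for the reverse inequality, and uniform-in-$\tau$ kernel bounds for the heat multiplier). The paper itself does not prove this lemma; it recalls it from Chapter 2 of the cited reference of Bahouri--Chemin--Danchin, and your argument is essentially the one given there, so there is nothing to reconcile beyond minor bookkeeping (e.g.\ the cutoffs should equal $1$ on a neighborhood of $\mathcal B$, resp.\ $\mathcal C$, and in the heat estimate $c$ must be taken below the squared inner radius of the support of $\psi$, not just of $\mathcal C$).
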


Finally let us recall  Bony's decomposition in the inhomogeneous
context from \cite{Bo81}:
\begin{equation}
\begin{split}
&uv=T_u v+T_v u+R(u,v), \end{split}\label{bony}
\end{equation}
where
\begin{equation*}
\begin{split}
&{T}_u v\eqdefa\sum_{j \in \mathbb{Z}}S_{j-1}u\Delta_j v,\quad
{R}(u,v)\eqdefa\sum_{j\in\Z}\Delta_j u \widetilde{
  \Delta}_{j}v\with
\widetilde{\Delta}_{j}v\eqdefa \sum_{|j'-j|\leq 1}\Delta_{j'}v.
\end{split}
\end{equation*}
We sometime will also use Bony's decomposition in homogeneous
context.

 \setcounter{equation}{0}
\section{ Ideas of the proof and structure of the paper}

 Because we shall consider only
perturbations of the reference density $1,$ it is natural to set
$a\eqdefa 1/\rho-1$ so that System \eqref{inNS} translates into
\begin{equation}\label{INS}
 \quad\left\{\begin{array}{lll}
\displaystyle \pa_t a + v \cdot \grad a=0 \qquad (t,x)\in\R^+\times\R^2,  \\
\displaystyle \pa_t v + v \cdot \grad v+ (1+a)(\grad\pi-\D v)=0, \\
\displaystyle \dive\, v = 0, \\
 \displaystyle (a, v)|_{t=0}=(a_0, v_{0}).
\end{array}\right.
\end{equation}

Before proceeding, let us  recall  the striated distribution spaces
with respect to the vector field $X$ from  \cite{Chemin88,
Chemin91}:
\begin{equation}\label{S1eq7}
C^{\sigma}_{\delta}(X,k) \eqdefa\Bigl\{ f\in
C^{\sigma}(\R^2)\,\bigl|\,
 \|f\|^{\sigma,k}_{\delta,X}
\eqdefa\sum_{\ell=0}^k
 \|(T_{X\cdot\nabla} )^\ell f\|_{C^{\sigma-\ell\delta }}
 <+\infty \Bigr\},
 \quad \sigma\in \R^\ast\backslash\{1\},
 \end{equation} where $T_ab$ denotes the para-product of $b$ by
 $a$  (see \eqref{bony}).
 And  we call a vector field $X$ is $\delta,k$-regular if
\beq \label{S6eq4}
 \|X\| ^{1-\delta,k-1}_{\delta,X}<\infty, \with
  \delta\in ]0,1[
 \andf
 k\delta <1.
\eeq However  as in \cite{ DZ14, HPZ}, the functional framework in
this work for solving \eqref{INS} is motivated by classical maximal
regularity estimates for the evolutionary Stokes system, so that it
is more natural to introduce the striated distribution space in the
framework of Sobolev space $W^{s,p}$
\begin{equation} \label{S6eq5}
W^{s,p}_{ \delta}(X,k)\eqdefa \Bigl\{ f\in W^{s,p}(\R^2)\,\bigr|\,
  |f |^{s,p,k}_{\delta,X}
\eqdefa\sum_{\ell=0}^k
 \|(T_{X\cdot\nabla} )^\ell f\|_{W^{s-\ell\delta,p }}
 <+\infty \Bigr\} .
 \end{equation}
Corresponding to \eqref{S6eq5}, we also define
\begin{equation} \label{S1eq3}
\cW^{s,p}_{ \delta}(X,k) \eqdefa\Bigl\{ f\in W^{s,p}(\R^2)\,\Bigr|\,
  |f\|^{s,p,k}_{\delta,X}
\eqdefa \sum_{\ell=0}^k
 \|\p_X^\ell f\|_{W^{s-\ell\delta,p }}
 <+\infty \Bigr\}.
 \end{equation} Here and in all that follows, we always denote
 $\p_Xf={X\cdot\nabla}f.$
 We shall prove in Lemma \ref{lem:prop} below that the function  spaces $W^{s,p}_{
 \delta}(X,k)$ and $\cW^{s,p}_{ \delta}(X,k)$ are the same provided
 that the vector field $X$ is $\delta,k$-regular, $s\in ]k\delta,1[$ or  $s\in ]k\delta-1,1[$ if $\dive X=0.$

 \smallbreak

 We shall first prove the following more general result:

 \begin{thm}\label{thm1}
 {\sl Let $p\in ]2,4[,$ let $X_0=(X_0^1,X_0^2)$ and $(a_0,v_0)$
 satisfy $\dive X_0=0,$
  $\p_{X_0}^{\ell-1}X_0\in W^{2,p},$
  $\p_{X_0}^\ell a_0 \in L^\infty$ for $\ell\in \{1,\cdots, k\}$ and
  $a_0\in L^2, v_0\in \cW^{1,p}_{
 \varepsilon/k}(X_0,k)$ for some $\varepsilon\in ]0,1[$.
  Then there exists a positive constant $c_0$ so that
 whenever $\|a_0\|_{L^\infty}\leq c_0,$ there exists a divergence free vector field $X(t)$
 which satisfies \beq\label{S1eq2}
 \left\{\begin{array}{l}
\displaystyle \p_tX+v\cdot X=X\cdot\na v,\\
\displaystyle X(0,x)=X_0(x),
\end{array}\right. \eeq
 $\p_X^{\ell-1}X\in L^\infty_\loc(\R^+; W^{2,p})$ for $\ell\in \{1,\cdots, k\},$  and \eqref{INS} has a
 unique global
 solution $(a, v)$ with
 \beq\label{S1eq4}
 \begin{split}
&\|(\d_t \d_X^{\ell } v,\Delta\d_X^{\ell} v, \nabla\d_X^{\ell
}\pi)\|_{L^{r_{\ell }}_t(L^p)} +\|\d_X^{\ell
}a\|_{L^\infty_t(L^\infty)}+\|\nabla\d_X^{\ell } v\|_{L^{r_{\ell
}}_t(L^\infty) \cap
L^{s_{\ell }}_t(L^p)}\\
&\qquad\qquad+\|\d_X^{\ell } v\|_{L^{s_{\ell }}_t(L^\infty)} +
\|\d_t \d_X^{\ell-1}X\|_{L^{s_{\ell }}_t(W^{1,p})} + \|\d_X^{\ell-1}
X\|_{L^\infty_t(W^{2,p})}\leq\cH_\ell(t),
\end{split}
\eeq for  $\ell\in \{0,\cdots, k\}$ (with the convention:
$\d_X^{-1}X\eqdefa 0$). Here and in all that follows, we always
denote  $r_\ell, s_\ell$ to be any numbers with $r_\ell \in
\bigl]1,\f{2k}{k+\ell\e}\bigr[$ and $s_\ell\in
\bigl]2,\f{2k}{\ell\e}\bigr[,$   and $ \cH_\ell(t), \ell=0,\cdots,
k,$ to be positive increasing functions of $t$ which depend on the
initial data, $r_\ell, s_\ell,$  and which may change from line to
line.}\end{thm}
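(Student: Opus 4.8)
The plan is to combine maximal $L^{r}_t(L^p)$ regularity for the evolutionary Stokes operator with an induction on the order $\ell$ of striated differentiation, the whole scheme being organized around one algebraic fact: equation \eqref{S1eq2}, namely $\partial_t X+v\cdot\na X=X\cdot\na v$, is exactly the condition that the directional derivative $\partial_X\eqdefa X\cdot\na$ commute with the material derivative $D_t\eqdefa\partial_t+v\cdot\na$. Indeed a direct computation gives $[D_t,\partial_X]f=(\partial_t X+v\cdot\na X-X\cdot\na v)\cdot\na f$, which vanishes along solutions of \eqref{S1eq2}, so that $[D_t,\partial_X^\ell]=0$ for every $\ell$. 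Applying this to the density equation $D_t a=0$ shows that each $\partial_X^\ell a$ is again transported by the volume-preserving flow of $v$, whence $\|\partial_X^\ell a\|_{L^\infty_t(L^\infty)}=\|\partial_{X_0}^\ell a_0\|_{L^\infty}$; this settles the density part of \eqref{S1eq4} at once and explains why only $\partial_{X_0}^\ell a_0\in L^\infty$ need be assumed. The same volume preservation, together with $\dive X_0=0$, propagates $\dive X=0$.

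For the velocity one rewrites the momentum equation of \eqref{INS} as a forced Stokes system $\partial_t v-\D v+\na\pi=-v\cdot\na v+a(\D v-\na\pi)$, $\dive v=0$. At the base level $\ell=0$ I would recover global existence and uniqueness in the spirit of \cite{DM12,HPZ}: the term $a(\D v-\na\pi)$, which carries the top-order derivatives, is absorbed on the left-hand side using the smallness $\|a\|_{L^\infty}\le c_0$, while $v\cdot\na v$ is handled by product estimates in $L^{r_0}_t(L^p)$ with $r_0\in\,]1,2[$. This is precisely where the constant $c_0$ (equivalently the smallness of $\eta$) enters.

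The core is the induction. Granting \eqref{S1eq4} up to order $\ell-1$ together with $\partial_X^{\ell-1}X\in L^\infty_t(W^{2,p})$, I apply $\partial_X^\ell$ to $D_t v=(1+a)(\D v-\na\pi)$; since $[D_t,\partial_X^\ell]=0$ the left-hand side is exactly $D_t\partial_X^\ell v$, and after rearrangement one obtains the Stokes system
\begin{equation*}
\partial_t\partial_X^\ell v-\D\partial_X^\ell v+\na\partial_X^\ell\pi=F_\ell,\qquad \dive\partial_X^\ell v=g_\ell,
\end{equation*}
where $g_\ell=[\dive,\partial_X^\ell]v$ is a sum of products of derivatives of $X$ with striated derivatives of $v$ of order $\le\ell-1$, and
\[
F_\ell=a(\D\partial_X^\ell v-\na\partial_X^\ell\pi)-v\cdot\na\partial_X^\ell v+(1+a)\bigl([\partial_X^\ell,\D]v-[\partial_X^\ell,\na]\pi\bigr)+\sum_{m=1}^{\ell}\binom{\ell}{m}(\partial_X^m a)\,\partial_X^{\ell-m}(\D v-\na\pi).
\]
The first term is again absorbed by $c_0$; the commutators $[\partial_X^\ell,\D]v$ and $[\partial_X^\ell,\na]\pi$ produce only striated derivatives of $v$ and $\pi$ of order $\le\ell-1$ multiplied by lower-order striated derivatives of $X$ (controlled in $W^{2,p}$), and the Leibniz sum pairs the conserved quantities $\partial_X^m a$ ($m\ge1$) with lower-order striated derivatives of $\D v-\na\pi$; all of these are estimated by the induction hypothesis after a H\"older interpolation in time. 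Feeding $F_\ell$ into maximal regularity then closes the bound for $(\partial_t\partial_X^\ell v,\D\partial_X^\ell v,\na\partial_X^\ell\pi)$ in $L^{r_\ell}_t(L^p)$, the admissible range $r_\ell<2k/(k+\ell\e)$ being dictated by the trace relation $2-2/r_\ell<1-\ell\e/k$, which matches the initial regularity $\partial_{X_0}^\ell v_0\in W^{1-\ell\e/k,p}$; the $L^{s_\ell}_t(L^\infty)$ and $L^{s_\ell}_t(L^p)$ estimates then follow from the two-dimensional embedding $W^{1,p}\hookrightarrow L^\infty$ ($p>2$). In parallel, $\partial_X^{\ell-1}X\in L^\infty_t(W^{2,p})$ is propagated from its own equation $D_t X=\partial_X v$, whose right-hand side is governed by the velocity bounds just obtained.

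The step I expect to be the main obstacle---precisely the one flagged in the remarks---is the pressure. In contrast with the constant-density case, $\pi$ obeys the variable-coefficient elliptic equation
\[
\dive\bigl[(1+a)\na\pi\bigr]=\dive\bigl[(1+a)\D v\bigr]-\na v:\na v,
\]
so that bounding $\na\partial_X^\ell\pi$ in $L^{r_\ell}_t(L^p)$ requires a delicate commutator analysis between $\partial_X^\ell$ and this operator, entangled with the merely transported---hence low-regularity---coefficient $a$. Controlling this pressure term, together with the non-solenoidal defect $g_\ell=\dive\partial_X^\ell v$ (which must either be corrected away or absorbed into a Stokes system with prescribed divergence), is the technical heart of the argument; the remainder is, in spirit, bookkeeping around the commutation identity $[D_t,\partial_X]=0$. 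Once \eqref{S1eq4} is established uniformly along a suitable approximation scheme, global existence follows by compactness and uniqueness by a comparison estimate for the difference of two solutions in a weaker norm.
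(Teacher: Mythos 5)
Your skeleton (the commutation $[D_t,\partial_X]=0$ from \eqref{S1eq2}, transport of $\partial_X^\ell a$, induction on $\ell$, heat/Stokes maximal regularity with the $a$-terms absorbed by smallness of $\|a_0\|_{L^\infty}$) is the same as the paper's, but the two points you yourself flag as "the technical heart" are left unresolved, and your proposed framing of one of them would send you down a harder road than necessary. First, the divergence defect: you plan to view $\partial_X^\ell v$ as solving a Stokes system with prescribed divergence $g_\ell=[\dive,\partial_X^\ell]v$, which then has to be "corrected away". The paper never does this. It writes the $\partial_X^\ell$-differentiated momentum equation as a plain heat equation with source $-v\cdot\nabla\partial_X^\ell v+a\Delta\partial_X^\ell v-(1+a)\nabla\partial_X^\ell\pi+F_\ell$ (see \eqref{S3eq15} and \eqref{S5eq15}) and applies maximal regularity for the heat semigroup, for which no solenoidality of $\partial_X^\ell v$ is required; the pressure gradient is not part of the unknown but is estimated \emph{beforehand} from the variable-coefficient elliptic equation obtained by taking $\dive$ of \eqref{S3eq8}, namely $\dive\bigl((1+a)\nabla\partial_X^\ell\pi\bigr)=\dive G_\ell$, inverted by smallness of $\|a\|_{L^\infty}$. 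What makes this tractable is the algebraic identity $\partial_X\dive f=\dive(\partial_X f)-\dive(f\cdot\nabla X)$ (valid since $\dive X=0$), which produces the recursion \eqref{S5eq13a} for $G_\ell$; the resulting estimate (Lemma \ref{S5lem3}) is a full induction of its own, with cases $0\le i\le j\le\ell$, and none of this appears in your outline. Without it, your claim that "feeding $F_\ell$ into maximal regularity closes the bound" is not a proof.

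Second, your induction is ordered linearly — velocity bounds at order $\ell$ first, then propagate $\partial_X^{\ell-1}X\in L^\infty_t(W^{2,p})$ from $D_tX=\partial_X v$ — but that ordering is circular. The source $F_\ell$ contains the top-order terms $\partial_X^{\ell-1}\Delta X\cdot\nabla v$, $\partial_X^{\ell-1}\nabla X:\nabla^2 v$ and $\partial_X^{\ell-1}\nabla X\cdot\nabla\pi$ (cf. \eqref{S5eq11}), which are \emph{not} controlled by the induction hypothesis $J_{\ell-1}$ (that only contains $\partial_X^{\ell-2}X$ in $W^{2,p}$); conversely, the transport estimate for $\Delta\partial_X^{\ell-1}X$ requires $\Delta\partial_X^\ell v\in L^1_t(L^p)$. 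So neither quantity can be bounded before the other: the paper keeps these terms in time-integral form and closes the loop with Gronwall's inequality (see \eqref{S3eq12}--\eqref{S3eq13} for $\ell=1$ and \eqref{S5eq16}--\eqref{S5eq18} in general); your "Hölder interpolation in time" does not substitute for this. Finally, the base case $\ell=0$ cannot be delegated "in the spirit of" \cite{DM12}, whose hypothesis \eqref{eq:small} imposes smallness of the velocity, while here $v_0$ is large (only $\omega_0\in L^1\cap L^p$): the paper has to prove the $\ell=0$ bound directly via the 2-D energy estimate comparing $v$ with $e^{t\Delta}v_0$ (Proposition \ref{S2prop2}, including the bilinear estimate \eqref{S2eq7}) combined with the exponential weight $v_\lambda$ of \eqref{S2eq10}, which absorbs $v\cdot\nabla v$ by choosing $\lambda$ large instead of assuming the data small.
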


As an application of the above theorem, we consider the propagation
of the boundary regularity for the 2-D density patch's problem of
\eqref{inNS}.

\begin{thm}\label{thm2}
{\sl Let the initial data $(\r_0, v_0)$ be given by \eqref{S3eq18}
for $\eta$ sufficiently small. Then \eqref{inNS} has a unique global
solution $(\r, v)$ so that $\r(t,x)={(1-\eta)\bf 1}_{\Om(t)}+{\bf
1}_{\Om(t)^c}$ for some    simply connected $W^{k+2,p}$ domain
$\Om(t),$ which is determined by the level surface of $f\in
L^\infty_\loc(\R^+;W^{k+2,p})$ so that with
  $X\eqdefa \na^\perp f,$ there holds \eqref{S1eq4}.}
\end{thm}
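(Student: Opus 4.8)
The plan is to derive Theorem \ref{thm2} as a corollary of the general result Theorem \ref{thm1}, by translating the geometric density-patch data \eqref{S3eq18} into the functional hypotheses required by Theorem \ref{thm1}, and then reading off the persistence of boundary regularity from the conclusion \eqref{S1eq4}. First I would set $a_0\eqdefa 1/\r_0-1$, so that $a_0=\frac{\eta}{1-\eta}{\bf 1}_{\Om_0}$; since $\Om_0$ is bounded this lies in $L^2\cap L^\infty$ with $\|a_0\|_{L^\infty}=\eta/(1-\eta)$, which can be made smaller than the threshold $c_0$ of Theorem \ref{thm1} by taking $\eta$ small. Next I would set $X_0\eqdefa\na^\perp f_0$. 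Because $f_0\in W^{k+2,p}(\R^2)$, we have $\div X_0=\div\na^\perp f_0=0$ automatically, and I would verify that the iterated derivatives $\p_{X_0}^{\ell-1}X_0$ lie in $W^{2,p}$ for $\ell\in\{1,\dots,k\}$ — this is where the $W^{k+2,p}$ regularity of $f_0$ is consumed, each application of $\p_{X_0}=X_0\cdot\na$ costing essentially one derivative.

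The genuinely structural step is matching the density and vorticity hypotheses. For the density I would use the observation recorded in the first Remark after Theorem \ref{thmmain}: since $\d\Om_0=f_0^{-1}(\{0\})$ and $X_0=\na^\perp f_0$ is tangent to the level sets of $f_0$, the field $X_0$ is tangent to $\d\Om_0$, whence $\p_{X_0}{\bf 1}_{\Om_0}=0$ in the distributional sense and therefore $\p_{X_0}^\ell a_0=0\in L^\infty$ for every $\ell\in\{1,\dots,k\}$. For the velocity, the assumption $\p_{X_0}^\ell\om_0\in W^{-\frac{\ell}{k}\e,p}$ together with $\om_0\in L^1\cap L^p$ must be converted into the statement $v_0\in\cW^{1,p}_{\e/k}(X_0,k)$ from \eqref{S1eq3}. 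Here I would recover $v_0$ from $\om_0$ by the two-dimensional Biot--Savart law $v_0=\na^\perp\Delta^{-1}\om_0$, which gains one derivative, so that $\om_0\in L^p$ gives $v_0\in W^{1,p}$, and more carefully $\p_{X_0}^\ell\om_0\in W^{-\frac{\ell}{k}\e,p}$ yields $\p_{X_0}^\ell v_0\in W^{1-\frac{\ell}{k}\e,p}$, matching the defining sum $\sum_{\ell=0}^k\|\p_{X_0}^\ell v_0\|_{W^{1-\ell\frac{\e}{k},p}}$ with $\delta=\e/k$. The subtlety is that $\p_{X_0}$ does not commute with $\na^\perp\Delta^{-1}$, so I expect to need a commutator estimate controlling $[\p_{X_0},\na^\perp\Delta^{-1}]$ in the appropriate $W^{s,p}$ norms, using the $W^{2,p}$ control on $X_0$ and its iterates.

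Having checked all hypotheses, Theorem \ref{thm1} delivers a unique global solution $(a,v)$ of \eqref{INS}, equivalently of \eqref{inNS} with $\r=1/(1+a)$, together with a divergence-free field $X(t)$ solving the transport--stretching system \eqref{S1eq2} and satisfying the uniform bounds \eqref{S1eq4}. Since $X$ is divergence free and $v$ is divergence free, the flow map $\psi_t$ of $v$ is volume preserving and transports $\Om_0$ to $\Om(t)\eqdefa\psi_t(\Om_0)$; because $a_0={\eta}/{(1-\eta)}{\bf 1}_{\Om_0}$ is transported by the same flow, $\r(t,x)=(1-\eta){\bf 1}_{\Om(t)}+{\bf 1}_{\Om(t)^c}$, giving the claimed patch structure. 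To reconstruct the level-set function I would define $f$ by transporting $f_0$ along the flow, i.e. $f\eqdefa f_0\circ\psi_t^{-1}$, which solves $\p_t f+v\cdot\na f=0$; one checks that $\na^\perp f$ then solves \eqref{S1eq2} and hence coincides with $X$. The final task is to upgrade the $\cW^{2,p}$-type bounds on $X=\na^\perp f$ from \eqref{S1eq4} into $f\in L^\infty_\loc(\R^+;W^{k+2,p})$, thereby proving $\Om(t)=\{f(t,\cdot)<0\}$ is a $W^{k+2,p}$ domain for all time.

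I expect the main obstacle to be the velocity-to-vorticity conversion in the second paragraph: establishing that the striated regularity $\p_{X_0}^\ell\om_0\in W^{-\frac{\ell}{k}\e,p}$ is \emph{equivalent} to $v_0\in\cW^{1,p}_{\e/k}(X_0,k)$ requires the commutator analysis between $\p_{X_0}$ and the Biot--Savart operator, and one must moreover know that $\cW^{s,p}_\delta(X_0,k)$ and the paraproduct-based space $W^{s,p}_\delta(X_0,k)$ of \eqref{S6eq5} coincide — precisely the content of Lemma \ref{lem:prop}, which requires $X_0$ to be $\delta,k$-regular with $\delta=\e/k$ and $s$ in the admissible range. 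Verifying the $\e/k,k$-regularity condition \eqref{S6eq4}, namely $\|X_0\|^{1-\e/k,k-1}_{\e/k,X_0}<\infty$ with $k\cdot(\e/k)=\e<1$, from the $W^{k+2,p}$ smoothness of $f_0$ is where the interplay between the geometric and functional hypotheses is most delicate.
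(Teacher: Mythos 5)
Your reduction to Theorem \ref{thm1} is exactly the paper's route: the computation $a_0=\f{\eta}{1-\eta}{\bf 1}_{\Om_0}$ with $\|a_0\|_{L^\infty}$ small, the verification that $\div X_0=0$, $\p_{X_0}^\ell a_0\equiv 0$ and $\p_{X_0}^{\ell-1}X_0\in W^{k+2-\ell,p}\hookrightarrow W^{2,p}$ (the paper's \eqref{initial:X0}), and the Biot--Savart conversion of $\om_0\in L^1\cap L^p$, $\p_{X_0}^\ell\om_0\in W^{-\f{\ell}{k}\e,p}$ into $v_0\in W^{1,p}\cap \cW^{1,p}_{\e/k}(X_0,k)$ (the paper's \eqref{S6eq0} and \eqref{initial:v0}). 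The commutator/paraproduct analysis you anticipate for the latter, together with the equivalence of the spaces $W^{s,p}_{\delta}(X_0,k)$ and $\cW^{s,p}_{\delta}(X_0,k)$ from Lemma \ref{lem:prop} and the $\e/k,k$-regularity of $X_0$, is precisely how \eqref{initial:v0} is proved in Section \ref{Sect7}. The reconstruction of the patch by the flow $\psi$ of \eqref{S1eq6}, the transported level-set function and the identification $X=\na^\perp f$ solving \eqref{S1eq2} also agree with the paper.

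The genuine gap is in your closing step. You propose to ``upgrade'' the bounds of \eqref{S1eq4} into $f(t)\in W^{k+2,p}$ for the transported level-set function, i.e.\ into the \emph{full} Sobolev bound $X(t)\in W^{k+1,p}$. For $k\geq 2$ this is not what \eqref{S1eq4} provides and it cannot be obtained this way: \eqref{S1eq4} controls only the striated quantities $\d_X^{\ell-1}X$ in $W^{2,p}$, and since the density stays discontinuous across $\p\Om(t)$, the velocity has no better than $W^{2,p}$ regularity in the direction transversal to the interface, so the transport equation for $f$ cannot propagate $W^{k+2,p}$ regularity in all directions --- only repeated \emph{tangential} derivatives are controlled. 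The paper closes the argument differently, by converting the striated bound into regularity of the boundary parametrization: since $\d_s\gamma_0(s)=X_0(\gamma_0(s))$ by \eqref{S3eq19}, one has
$$
\d_s^{k}\bigl(\psi(t,\gamma_0(s))\bigr)=(\d_{X_0}^{k}\psi)(t,\gamma_0(s)),
$$
while iterating the flow identity $X(t,x)=(\d_{X_0}\psi)(t,\psi^{-1}(t,x))$ gives the key relation \eqref{claim:X},
$$
(\d_X^{k-1} X)(t,x)=(\d_{X_0}^{k}\psi)(t,\psi^{-1}(t,x)).
$$
Therefore the bound $\|\d_X^{k-1}X\|_{L^\infty_t(W^{2,p})}\leq\cH_k(t)$ contained in \eqref{S1eq4} yields $\d_{X_0}^{k}\psi\in L^\infty_\loc(W^{2,p})$, and hence $\d_s^k(\psi(t,\gamma_0(\cdot)))\in L^\infty_\loc(W^{2,p})$, i.e.\ $\p\Om(t)\in W^{k+2,p}$. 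Without this identity (or an equivalent device translating tangential derivatives of $X$ along the flow into arc-length derivatives of the boundary curve), your argument does not reach the stated conclusion.
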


It is easy to observe that Theorem \ref{thmmain} is a direct
consequence of Theorem \ref{thm2}.\smallskip

As an illustration to
 the main idea of the proof to Theorem \ref{thm2}, we first
 sketch the proof of Theorem \ref{thm2} for the case
$k=1$  by assuming Theorem \ref{thm1}. \smallskip

\no$\bullet$ \underline{Sketch of the proof of Theorem \ref{thm2}
for $k=1.$} Let $\psi(t,\cdot)$ be the flow associated with the
velocity vector field $v,$ that is
\begin{equation}\label{S1eq6}
 \left\{\begin{array}{l}
\displaystyle \f{d}{dt}\psi(t,x)=v(t,\psi(t,x)),\\
\displaystyle \psi(0,x)=x.
\end{array}\right.
\end{equation}
Then by virtue of Theorem \ref{thm1}, one has $$
\psi(t,\cdot)-\Id\in L^\infty_\loc (\R^+; W^{2,p}),
$$ and hence $\Om(t)\eqdefa \psi(t,\Omega_0)$ is  a
 $W^{2,p}$ domain. Moreover, it follows from the transport equation of \eqref{inNS} that
 \beq \label{density} \r(t,x)={(1-\eta)\bf 1}_{\Om(t)}+{\bf
1}_{\Om(t)^c}. \eeq
   Next we are going to prove that
$\Omega(t)$ is
 of class $W^{3,p}.$ As a matter of fact,
 let $\d\Omega(t)$
 be the level surface of $f(t,\cdot).$ Then $f$ solves
\beno
 \left\{\begin{array}{l}
\displaystyle \p_tf+v\cdot\na f=0,\\
\displaystyle f(0,x)=f_0(x),
\end{array}\right. \eeno
which implies that $X(t,x)\eqdefa \na^\perp f(t,x)$ solves
\eqref{S1eq2}. Thus it follows from \eqref{S1eq2} and \eqref{S1eq6}
that \beq \label{S3eq21}
\begin{split} &X(t,\psi(t,x))=X_0(x)\f{\p\psi(t,x)}{\p x}.
\end{split} \eeq
Moreover, due to $\dive X_0=0=\dive v,$ one has \beq\label{S1eq8}
 \p_t\dive X+v\cdot \dive X=0 \with \dive X_0=0,
 \eeq
 Hence $\dive X(t)=0,$ and
according to Theorem \ref{thm1}, we have
$$
 X\in
L^\infty_\loc(\R^+; W^{2,p}).
$$
It is easy to observe that $X\in L^\infty_\loc(L^p)$, and \beno
\begin{split} \d_{j}\p_j\bigl(X^k(t,\psi(t,x))\bigr)
=&\d_{i}\p_j\psi(t,x)\cdot\nabla X^k(t,\psi(t,x))\\
&+(\d_i\psi\otimes\d_j\psi)(t,x):\nabla^2 X^k(t,\psi(t,x)) \in
L^\infty_\loc(\R^+;L^p).
\end{split} \eeno
Therefore we deduce from \eqref{S3eq21} that \beq\label{S1eq5}
 \d_s(\psi(t,\gamma_0(s)))=X_0(\gamma_0(s))
 \f{\p\psi(t,\gamma_0(s))}{\p  x}
= X(t,\psi(t,\gamma_0(s)))\in L^\infty_\loc(\R^+; W^{2,p}),\eeq that
is, $\Omega(t)$ is  of class $W^{3,p}$.\\

Let us now present the proof of Theorem \ref{thm2} by assuming
Theorem \ref{thm1}.
\smallskip

\begin{proof}[Proof of Theorem \ref{thm2}] In view of \eqref{S3eq18},  we observe that $X_0\in W^{k+1,p}$, $k\geq 1$, $\div X_0=0$, and
moreover
\begin{equation}\label{initial:X0}
\d_{X_0}^\ell a_0\equiv 0, \quad \d_{X_0}^{\ell-1}X_0\in
W^{k+2-\ell,p}\hookrightarrow W^{2,p}, \quad \ell=1,\cdots,k.
\end{equation}
Since $p>2$ and $\na v_0=\na\na^\perp\D^{-1}\om_0,$ for any integer
$N,$ it follows from Lemma \ref{lem2.1} that  \beno
\begin{split}
\|v_0\|_{L^p}\lesssim & \sum_{ j\leq
N}2^{j\bigl(1-\f2p\bigr)}\|\dot\D_j\na
v_0\|_{L^1}+\sum_{j>N}2^{-j}\|\na\dot\D_j v_0\|_{L^p}\\
\lesssim &
2^{N\bigl(1-\f2p\bigr)}\|\om_0\|_{L^1}+2^{-N}\|\om_0\|_{L^p}.
\end{split} \eeno
Choosing the best integer $N$ in the above inequality so that \beno
2^N \sim
\biggl(\frac{\|\om_0\|_{L^p}}{\|\om_0\|_{L^1}}\biggr)^{\f{p}{2(p-1)}}
\eeno leads to \beno \|v_0\|_{L^p}\leq
C\|\om_0\|_{L^1}^{\f{p}{2(p-1)}}\|\om_0\|_{L^p}^{\f{p-2}{2(p-1)}}.
\eeno As $\nabla v_0=\nabla\nabla^\bot \Delta^{-1}\omega_0\in L^p$,
we achieve \beq\label{S6eq0} \|v_0\|_{ W^{1,p}}\leq
C\|\om_0\|_{L^1\cap L^p}.\eeq Moreover, we claim  the following
striated regularity for $v_0$
\begin{equation}\label{initial:v0}
|v_0\|_{\e/k,X_0}^{1,p,k}\leq C|\om_0|_{\e/k,X_0}^{0,p,k}
\quad\mbox{for any}\quad \e\in ]0,1[,
\end{equation} which will be proved in Section \ref{Sect7}.

Thanks to  \eqref{initial:X0} and   \eqref{initial:v0}, we deduce
from Theorem \ref{thm1} that  with initial data $(a_0,v_0)$ given by
\eqref{S3eq18}, \eqref{INS} has a unique solution $(a,v)$ satisfying
\eqref{S1eq4}. Furthermore, due to $X_0=\nabla^\bot f_0 $, it
follows from a similar derivation of \eqref{S1eq5} that
$$
\d_s^{k}(\psi(t,\gamma_0(s))) =\d_s^{k-1}\bigl(
(\d_{X_0}\psi)(t,\gamma_0(s)) \bigr) =\cdots
=(\d_{X_0}^{k}\psi)(t,\gamma_0(s)), \quad\forall k\geq 1 .
$$
Hence in order to show that $\p\Om(t)=\psi(t,\gamma_0(\p\Om_0)))\in
W^{k+2,p}$, it suffices to prove that $\d_{X_0}^k\psi\in
L^\infty_\loc(W^{2,p}),$ which is equivalent to show that
$\d_X^{k-1}X\in L^\infty_\loc(W^{2,p})$. Indeed, it follows from
 \eqref{S1eq6} and \eqref{S1eq2} that
$$
X(t,x)=(\d_{X_0}\psi)(t,\psi^{-1}(t,x)),
$$
and hence
\begin{align*}
(\d_X X)(t,x) &=(\d_{X_0}\psi)(t,\psi^{-1}(t,x)) \cdot
\nabla\psi^{-1}(t,x) \cdot (\nabla\d_{X_0}\psi)(t,\psi^{-1}(t,x))
\\
&=X_0(t,\psi^{-1}(t,x)) \cdot(\nabla\psi)(t,\psi^{-1}(t,x))
\cdot\nabla\psi^{-1}(t,x)
\cdot(\nabla\d_{X_0}\psi)(t,\psi^{-1}(t,x))
\\
&=X_0(t,\psi^{-1}(t,x)) \cdot(\nabla\d_{X_0}\psi)(t,\psi^{-1}(t,x))
\\
&=( \d_{X_0}^2\psi)(t,\psi^{-1}(t,x)).
\end{align*}
The same arguments yield   that
\begin{align}\label{claim:X}
(\d_X^{k-1} X)(t,x)=(\d_{X_0}^{k}\psi)(t,\psi^{-1}(t,x)), \quad
\forall k\geq 1,
\end{align}
which implies $ \d_{X_0}^k\psi\in L^\infty_\loc(W^{2,p}) $ if
$\d_X^{k-1} X\in L^\infty_\loc(W^{2,p}).$ And hence by virtue of
\eqref{S1eq4}, we conclude that  $\p\Om(t)\in W^{k+2,p}.$ This
completes the proof of Theorem \ref{thm2}.
\end{proof}

\no The structure of the paper is organized as follows: \smallskip

\no Section \ref{S4}, Section \ref{S5} and Section \ref{S6} are
devoted to the proof of Theorem \ref{thm1}: Section \ref{S4} is
devoted to the proof of the Estimate \eqref{S1eq4} for $\ell=0$;
in Section \ref{S5} we prove the Estimate \eqref{S1eq4} for the case $\ell=1$; and in Section \ref{S6}
we first derive the Estimate \eqref{S1eq4} for  the general case $\ell=2,\cdots,k$, and then Theorem \ref{thm1} will be proved at the end.\\
We concentrate on the proof of \eqref{initial:v0} in Section
\ref{Sect7} and the appendix will be devoted to the proofs of Lemma
\ref{lem:Rq} and Lemma \ref{lem:prop}. \smallskip

Let us complete this section with the notations we are going to use
in this context.

\smallbreak \noindent{\bf Notations:} $C^\s$ denotes the classical
H\"older space, $W^{s,p}$ stands for the
 Sobolev space with norm given by
$\|f\|_{W^{s,p}}\eqdefa \|(Id-\D)^{\f{s}2}f\|_{L^p}.$ We denote
$[A;B]=AB-BA$ the commutator of $A$ and $B.$ For $X$ a Banach space
and $I$ an interval of $\R,$ we denote by
 $L^q(I;\,X)$
 the set of measurable functions on $I$ with values in
$X,$ such that $t\longmapsto\|f(t)\|_{X}$ belongs to $L^q(I).$ For
$a\lesssim b$, we mean that there is a uniform constant $C,$ which
may be different on different lines, such that $a\leq Cb$.

 \setcounter{equation}{0}
\section{The preliminary estimates}\label{S4}

In this section, we shall prove the Estimate \eqref{S1eq4} for
$\ell=0,$  which will be based on the energy estimate of the System
\eqref{inNS}.

\begin{lem}\label{S2lem1}
{\sl Let $p\in ]2,4[,$ $r\in ]1,2[, s\in ]2, \infty]$ and $q\in
\bigl]\f{2p}{p+1}, 2p\bigr[.$ Let $v_0\in W^{1,p}$ and $
v_L(t)\eqdefa e^{t\D}v_0.$ Then there holds \beq\label{S2eq0} \|\D
v_L\|_{L^{r}_t(L^p)}+\bigl\|\na v_L\bigr\|_{L^{s}_t(L^p)}+\|\na
v_L\|_{L^q_t(L^{2p})}\leq C\|v_0\|_{W^{1,p}}. \eeq}
\end{lem}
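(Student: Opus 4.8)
The plan is to decompose $v_L$ via the homogeneous Littlewood--Paley blocks, $v_L=\sum_{j\in\Z}\dot\Delta_j v_L$ with $\dot\Delta_j v_L=e^{t\Delta}\dot\Delta_j v_0$, and to estimate each frequency piece separately using Bernstein's inequalities together with the parabolic smoothing bound of Lemma~\ref{lem2.1}, namely $\|e^{t\Delta}\dot\Delta_j v_0\|_{L^a}\lesssim e^{-c2^{2j}t}\|\dot\Delta_j v_0\|_{L^a}$. Combining this with $\|\partial^\alpha\dot\Delta_j u\|_{L^a}\sim 2^{j|\alpha|}\|\dot\Delta_j u\|_{L^a}$ and, for the third term, with the two-dimensional $L^p\to L^{2p}$ gain $2^{j\cdot 2(\frac1p-\frac1{2p})}=2^{j/p}$, each block reduces to a scalar profile $2^{aj}e^{-c2^{2j}t}$ whose $L^\mu(\R^+;dt)$ norm equals, up to a constant, $2^{aj}2^{-2j/\mu}=2^{j(a-2/\mu)}$. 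By Minkowski's inequality I may sum these block estimates, so the whole matter comes down to summing $2^{j(a-2/\mu)}\|\dot\Delta_j v_0\|_{L^p}$ over $j\in\Z$.

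Concretely, the first term corresponds to $\mu=r$, $a=2$ (two spatial derivatives), giving the weight $2^{j(2-2/r)}$; the second to $\mu=s$, $a=1$, giving $2^{j(1-2/s)}$; and the third to $\mu=q$, $a=1+\frac1p$, giving $2^{j(1+\frac1p-2/q)}$. In each case I split the sum into high and low frequencies. For $j\ge0$ I use $\|\dot\Delta_j v_0\|_{L^p}\lesssim 2^{-j}\|\nabla v_0\|_{L^p}$ (with $\|\dot\Delta_j\nabla v_0\|_{L^p}\lesssim\|\nabla v_0\|_{L^p}$ by $L^p$-boundedness of the projections), so the effective exponents become $1-2/r$, $-2/s$ and $\frac1p-2/q$; their negativity is exactly the upper constraints $r<2$, $s<\infty$ and $q<2p$. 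For $j<0$ I use instead $\|\dot\Delta_j v_0\|_{L^p}\lesssim\|v_0\|_{L^p}$, keeping the exponents $2(1-1/r)$, $1-2/s$ and $1+\frac1p-2/q$; their positivity is exactly the lower constraints $r>1$, $s>2$ and $q>\frac{2p}{p+1}$. Summing the two resulting geometric series and using $\|\nabla v_0\|_{L^p}+\|v_0\|_{L^p}\lesssim\|v_0\|_{W^{1,p}}$ then yields \eqref{S2eq0}. The endpoint $s=\infty$ is not produced by this computation and is handled directly by the contraction bound $\|\nabla e^{t\Delta}v_0\|_{L^p}=\|e^{t\Delta}\nabla v_0\|_{L^p}\le\|\nabla v_0\|_{L^p}$.

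The one genuinely delicate point is the exact matching of the exponent ranges. The upper endpoints ($r<2$, $q<2p$, and $s<\infty$) come from summability at high frequencies, where the parabolic smoothing must absorb the extra derivatives against the limited time integrability; the lower endpoints ($r>1$, $s>2$, $q>\frac{2p}{p+1}$) come from summability at low frequencies, where enough integrability as $t\to\infty$ is needed for convergence. Getting the Bernstein bookkeeping exactly right---in particular the factor $2^{j/p}$ for the $L^p\to L^{2p}$ gain in dimension two---is precisely what produces the interval $\bigl]\frac{2p}{p+1},2p\bigr[$ for $q$, while everything else is routine geometric summation.
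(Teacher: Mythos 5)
Your proof is correct: the block-by-block bookkeeping checks out (the $L^\mu_t$ norm of $2^{aj}e^{-c2^{2j}t}$ is indeed $\sim 2^{j(a-2/\mu)}$, the high-frequency gain $\|\dot\Delta_j v_0\|_{L^p}\lesssim 2^{-j}\|\nabla v_0\|_{L^p}$ and the $2^{j/p}$ Bernstein factor for $L^p\to L^{2p}$ in dimension two are right), and the convergence conditions of your geometric series reproduce exactly the stated ranges for $r$, $s$, $q$, with the $s=\infty$ endpoint correctly handled by semigroup contraction. The route, however, is genuinely different from the paper's. The paper does not decompose into dyadic blocks at all: it embeds $W^{1,p}\hookrightarrow \dot B^0_{p,p}\cap\dot B^1_{p,p}$ (Theorem 2.41 of \cite{BCD}), invokes the heat-kernel characterization of negative-index Besov norms (Theorem 2.34 of \cite{BCD}) to get weighted-in-time bounds such as $\|t^{\frac12-\frac1p}\Delta e^{t\Delta}v_0\|_{L^p_t(L^p)}\lesssim \|v_0\|_{W^{1,p}}$, and then splits the \emph{time} integral into $]0,t\wedge1[$ and $]t\wedge1,t[$, applying H\"older with power weights; the constraints on $r,s,q$ appear there as integrability conditions on $t^{-\theta}$ near $0$ and near $\infty$, rather than as convergence of frequency sums. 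The two mechanisms are dual ways of trading time decay against frequency growth, but yours is more self-contained (only Lemma~\ref{lem2.1} is needed, no external Besov machinery), and it covers the whole range $s\in]2,\infty[$ in one stroke, whereas the paper's weighted-time argument forces $s<p$ at first and then recovers $s\in[p,\infty]$ by interpolation with the $L^\infty_t(L^p)$ bound. What the paper's approach buys in exchange is brevity given the cited theorems, and the sharper $\ell^p$-type summation implicit in the $\dot B^\sigma_{p,p}$ norms, though that sharpness is not needed for \eqref{S2eq0}; your $\ell^1$ (Minkowski) summation loses nothing here because the split into $j\geq 0$ and $j<0$ produces strictly convergent geometric series.
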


\begin{proof} Since $p\in ]2,4[,$ by virtue of Theorem 2.41 of \cite{BCD}, we have
$v_0\in W^{1,p}\hookrightarrow \dot{B}^0_{p,p}\cap \dB^1_{p,p},$
where $\dot B^\s_{p,q}$ denotes the homogeneous Besov space (see
Definition \ref{def1.1}). Hence according to Theorem 2.34 of
\cite{BCD} (the definition of Besov spaces with negative indices
through heat kernel) that \beno &&\bigl\|t^{\f12-\f1p}\D
e^{t\D}v_0\bigr\|_{L^p_t(L^p)}\leq C\|\D
v_0\|_{\dB^{-1}_{p,p}}\leq C\|v_0\|_{W^{1,p}},\\
&&\bigl\|t^{1-\f1p}\D e^{t\D}v_0\bigr\|_{L^p_t(L^p)}\leq C\|\D
v_0\|_{\dB^{-2}_{p,p}}\leq C\|v_0\|_{W^{1,p}}, \eeno so that for any
$r\in ]1,2[$ and $t>0,$ one has \beq\label{S2eq2} \begin{split}
\bigl\|\D e^{t\D}v_0\bigr\|_{L^{r}_t(L^p)}\leq &
\bigl\|t^{\f12-\f1p}\D e^{t\D}v_0\bigr\|_{L^p(]0,t\wedge
1[\times\R^2)}\bigl\|t^{-\bigl(\f12-\f1p\bigr)}\bigr\|_{L^{\bar{r}}(]0,t\wedge
1[)}\\
&+\bigl\|t^{1-\f1p}\D e^{t\D}v_0\bigr\|_{L^p(]t\wedge
1,t[\times\R^2)}\bigl\|t^{-\bigl(1-\f1p\bigr)}\bigr\|_{L^{\bar{r}}(]t\wedge
1,t[)}\\
\leq & C\|v_0\|_{W^{1,p}}, \end{split} \eeq where
$\bar{r}=\f{pr}{p-r},$ and we used the fact that
$\bar{r}\bigl(\f12-\f1p\bigr)<1,$ $\bar{r}\bigl(1-\f1p\bigr)>1$
for any $r\in ]1,2[.$

Along the same line, one has \beno \bigl\|t^{\f12-\f1p}\na
e^{t\D}v_0\bigr\|_{L^p_t(L^p)}\leq C\|\na v_0\|_{\dB^{-1}_{p,p}}\leq
C\|v_0\|_{L^{p}},\eeno so that we have \beno
\begin{split}
\bigl\|\na e^{t\D}v_0\|_{L^{s}_t(L^p)}\leq & \bigl\|t^{\f12-\f1p}\na
e^{t\D}v_0\bigr\|_{L^p(]t\wedge
1,t[\times\R^2)}\bigl\|t^{-\bigl(\f12-\f1p\bigr)}\bigr\|_{L^{\bar{s}}(]t\wedge
1,t[)}\\
&+\bigl\|\na e^{t\D}v_0\bigr\|_{L^\infty(]0,t\wedge 1[; L^p)} \leq
C\bigl(\|v_0\|_{L^p}+\|\na v_0\|_{L^p}\bigr), \end{split} \eeno
where $\bar{s}=\f{ps}{p-s}$  and $\bar{s}\bigl(\f12-\f1p\bigr)>1$
for any $s\in ]2,p[.$ This together with the fact: $\bigl\|\na
e^{t\D}v_0\bigr\|_{L^\infty(\R^+; L^p)}\lesssim \|\na v_0\|_{L^p},$
ensures that \beq\label{S2eq3} \bigl\|\na
e^{t\D}v_0\|_{L^{s}_t(L^p)}\leq C\|v_0\|_{W^{1,p}}\quad\forall\ s\in
]2,\infty].\eeq

Finally it follows from Lemma \ref{lem2.1} that $\na v_0\in
\dB^{-\f1p}_{2p,p}\cap \dB^{-1-\f1p}_{2p,p}\hookrightarrow
\dB^{-\f1p}_{2p,2p}\cap \dB^{-1-\f1p}_{2p,2p}.$ Then we deduce from
Theorem 2.34 of \cite{BCD} that \beq\label{S2eq4} \begin{split}
\bigl\|\na e^{t\D}v_0\bigr\|_{L^q_t(L^{2p})}\leq & \bigl\|\na
e^{t\D}v_0\bigr\|_{L^{2p}(]0,t\wedge 1[; L^{2p})}+\bigl\|t^{\f12}\na
e^{t\D}v_0\bigr\|_{L^{2p}(]t\wedge 1,t[;
L^{2p})}\bigl\|t^{-\f12}\bigr\|_{L^{\bar{q}}(]t\wedge
1,t[)}\\
\leq & C\Bigl(\|\na v_0\|_{\dB^{-\f1p}_{2p,2p}}+\|\na
v_0\|_{\dB^{-1-\f1p}_{2p,2p}}\Bigr)\leq C\|v_0\|_{W^{1,p}},
\end{split} \eeq $\bar{q}=\f{2pq}{2p-q},$ and we
used the fact that $\f{\bar{q}}2>1$ for any $q$ satisfying $q \in
\bigl]\f{2p}{p+1}, 2p\bigr[.$

By summing up \eqref{S2eq2}, \eqref{S2eq3} and \eqref{S2eq4}, we
achieve \eqref{S2eq0}.
\end{proof}

\begin{prop}\label{S2prop2}
{\sl Let $p\in ]2,4[, v_0\in W^{1,p},$ and $\r_0$ satisfy \beq
\label{S2eq4a} \r_0-1\in L^2,\quad m\leq \r_0\leq M. \eeq Let $(\r,
v)$ be a smooth  enough solution of \eqref{inNS} on $[0,T].$ Then
there exists a positive constant, $C,$ which depends on $m, M,
\|\r_0-1\|_{L^2}$ and $\|v_0\|_{W^{1,p}}$ so that \beq \label{S2eq9}
\|({v}-v_L)(t)\|_{L^2}^2+\|\na ({v}-v_L)\|_{L^2_t(L^2)}^2\leq
C\bigl(1+{t}^2\bigr)^{\bigl(1-\f2p\bigr)}\exp\bigl(C(1+t)^{\f12}\bigr)\eqdefa
\h(t) \eeq for any $t\leq T.$}
\end{prop}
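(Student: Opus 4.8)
The plan is to perform the basic energy estimate for $w \eqdefa v - v_L$, where $v_L = e^{t\Delta}v_0$ solves the free heat equation with the same data. First I would write down the equation satisfied by $w$. Since $v$ solves \eqref{inNS} and $v_L$ solves $\partial_t v_L - \Delta v_L = 0$, subtracting gives
\beno
\r\partial_t w - \Delta w + \nabla\pi = -\r\, v\cdot\nabla v - (\r-1)\partial_t v_L,
\eeno
after using $\partial_t(\r v) + \dive(\r v\otimes v) = \r(\partial_t v + v\cdot\nabla v)$ together with the mass equation and $\dive v = 0$. The idea is to test this equation against $w$ in $L^2$, use $\dive w = 0$ to kill the pressure term $\int \nabla\pi\cdot w = 0$, and use $m \leq \r \leq M$ to control the kinetic energy $\int \r|w|^2$ from above and below by $\|w\|_{L^2}^2$ up to the constants $m, M$. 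This produces the left-hand side of \eqref{S2eq9}, namely $\f{1}{2}\f{d}{dt}\int\r|w|^2 + \|\nabla w\|_{L^2}^2$.

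Next I would estimate the two forcing terms on the right. For the convection term I would write $\int \r\, v\cdot\nabla v\cdot w$ and, using $v = w + v_L$ and the divergence-free structure (so that $\int \r\, v\cdot\nabla w\cdot w$ is controlled after an integration by parts that exploits $\dive v=0$), reduce it to terms that are products of $w$, $v_L$ and their gradients. Here I would invoke Lemma \ref{S2lem1}, which gives $\nabla v_L \in L^s_t(L^p)$ for all $s\in]2,\infty]$ and $\nabla v_L\in L^q_t(L^{2p})$ for $q\in\bigl]\f{2p}{p+1},2p\bigr[$, together with $v_L\in L^\infty_t(L^2)$ (from $v_0\in L^2$, which follows from $v_0\in W^{1,p}$ with $p>2$ via the bound \eqref{S6eq0}-type reasoning applied through $\r_0-1\in L^2$ and energy control). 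The $(\r-1)\partial_t v_L$ term I would pair with $w$ and control using $\r-1 \in L^\infty_t(L^2)$ (propagated from $\r_0-1\in L^2$ by the transport equation, since $\|\r(t)-1\|_{L^2}=\|\r_0-1\|_{L^2}$) and the smoothing estimate on $\partial_t v_L = \Delta v_L$. The aim throughout is to bound each term by a small multiple of $\|\nabla w\|_{L^2}^2$ (absorbed into the left) plus a factor times $\|w\|_{L^2}^2$ whose time coefficient is integrable or controlled, by repeated use of Gagliardo–Nirenberg $\|w\|_{L^{p'}}\lesssim \|w\|_{L^2}^{1-\theta}\|\nabla w\|_{L^2}^{\theta}$ in two dimensions and Young's inequality.

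Then I would apply Gr\"onwall's lemma. After absorbing the gradient terms, the differential inequality takes the schematic form $\f{d}{dt}\|w\|_{L^2}^2 + \|\nabla w\|_{L^2}^2 \leq A(t)\|w\|_{L^2}^2 + B(t)$, where $A(t)$ collects the $L^s_t$ and $L^q_t$ norms of $\nabla v_L$ raised to appropriate powers and $B(t)$ collects the inhomogeneous contributions. The explicit form of the bound $\h(t) = C(1+t^2)^{1-2/p}\exp\bigl(C(1+t)^{1/2}\bigr)$ strongly suggests that the time growth comes precisely from integrating these $v_L$-norms: the factor $(1+t^2)^{1-2/p}$ reflects the $L^q_t(L^{2p})$ scaling exponent $1-2/p$, while $\exp(C(1+t)^{1/2})$ arises from $\int_0^t A(s)\,ds$ with $A\in L^1_t$ behaving like $t^{1/2}$ under H\"older. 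I would therefore track the exponents carefully so that the Gr\"onwall factor reproduces exactly this $\h(t)$.

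The main obstacle I expect is the handling of the variable density in the energy identity and, relatedly, controlling the nonlinear convection term at the right integrability. Because $\r$ multiplies $\partial_t w$ rather than appearing as a constant, the time derivative lands on $\int\r|w|^2$ and one must use the mass equation $\partial_t\r + \dive(\r v)=0$ to rewrite $\f{d}{dt}\int\r|w|^2 = 2\int\r\, w\cdot\partial_t w + \int(\partial_t\r)|w|^2$ and show the extra term $\int(\partial_t\r)|w|^2 = -\int\dive(\r v)|w|^2 = \int\r v\cdot\nabla|w|^2$ combines favorably with the convection contribution. Getting this cancellation clean — rather than losing a derivative on $\r$, which has no spatial regularity beyond $L^\infty\cap(1+L^2)$ — is the delicate point, and it is exactly where the structure of the inhomogeneous system must be used in place of the constant-density energy estimate.
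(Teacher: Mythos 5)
Your overall scheme coincides with the paper's: subtract the free evolution $v_L=e^{t\D}v_0$, derive the energy identity \eqref{S2eq5} for $\bar v=v-v_L$ (your cancellation via the mass equation $\p_t\r+\dive(\r v)=0$ is exactly how the paper absorbs $\int\r v\cdot\na\bar v\cdot\bar v$ into $\f12\f{d}{dt}\|\sqrt{\r}\,\bar v\|_{L^2}^2$, so the ``delicate point'' you flag is handled correctly), then estimate the forcing terms $(\r-1)\p_tv_L$ and $\r v\cdot\na v_L$ and conclude by Gronwall. Your treatment of $(\r-1)\p_tv_L$ (conservation of $\|\r-1\|_{L^2}$, heat smoothing from Lemma \ref{S2lem1}, 2-D interpolation, Young) and of the piece $\r\bar v\cdot\na v_L$ also matches the paper.

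The genuine gap is your claim that $v_L\in L^\infty_t(L^2)$ because ``$v_0\in L^2$ follows from $v_0\in W^{1,p}$ with $p>2$''. This is false on $\R^2$: $W^{1,p}(\R^2)$ with $p>2$ embeds into $L^\infty$ and into $L^q$ for $q\geq p$, but not into $L^2$ (a smooth function decaying like $|x|^{-\al}$ with $\f2p<\al\leq 1$ lies in $W^{1,p}$ but not in $L^2$); neither \eqref{S6eq0} nor the hypothesis $\r_0-1\in L^2$ gives any $L^2$ information on $v_0$. This is not cosmetic: the absence of $L^2$ control on $v$ and $v_L$ separately is the very reason the proposition estimates $v-v_L$ rather than $v$, and your false claim is load-bearing precisely where the proof is hardest, namely the purely quadratic term $\int\r\, v_L\cdot\na v_L\cdot\bar v$ arising from writing $v=\bar v+v_L$ in $\r v\cdot\na v_L$. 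Since $\r$ is merely bounded, one cannot integrate by parts to move the derivative off $v_L\otimes v_L$; one must bound $v_L\cdot\na v_L$ in an $L^2$-based space in $x$ using only $L^p$-based information on $v_0$. The paper does this through the separate paraproduct claim \eqref{S2eq7}, $\|v_L\otimes v_L\|_{L^{\f{p}2}_t(\dB^1_{2,\f{p}2})}\leq C\|v_0\|_{L^p}^2$, combined with $\bigl|\bigl(\r v_L\cdot\na v_L\ |\ \bar v\bigr)_{L^2}\bigr|\leq \|v_L\otimes v_L\|_{\dH^1}\|\bar v\|_{L^2}$; this Littlewood--Paley ingredient, which also produces the growth factor $(1+t^2)^{1-\f2p}$ in $\h(t)$, is entirely missing from your proposal. (A more elementary repair does exist --- H\"older with $\f1p+\f1p+\f{p-2}p=1$ gives $\bigl|\bigl(\r v_L\cdot\na v_L\ |\ \bar v\bigr)_{L^2}\bigr|\leq M\|v_L\|_{L^p}\|\na v_L\|_{L^p}\|\bar v\|_{L^{\f{p}{p-2}}}$, then interpolation and Young, using $p<4$ --- but some such argument must be supplied and the resulting growth function re-derived; the $L^\infty_t(L^2)$ shortcut is simply unavailable.)
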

\begin{proof} We first deduce from \eqref{inNS} and \eqref{S2eq4a}
that \beq \label{S2eq4b}
\|\r(t,\cdot)-1\|_{L^2}=\|\r_0-1\|_{L^2},\quad m\leq \r(t,x)\leq M.
\eeq

 Let us set $\bar{v}(t)\eqdefa v(t)-v_L(t).$ Then  in view of
\eqref{inNS}, we write \beq\label{S2eq4c}
\begin{split}
\r\p_t\bar{v}+\r v\cdot\na\bar{v}-\D\bar{v}+\na\pi=&-\r\p_tv_L-\D
v_L-\r v\cdot\na v_L\\
=&-(\r-1)\p_tv_L-\r v\cdot\na v_L.  \end{split} \eeq Taking $L^2$
inner product of the above equation with $\bar v$ and using the
transport equation of \eqref{inNS}, we get \beq \label{S2eq5}
\f12\f{d}{dt}\|\sqrt{\r}\bar v(t)\|_{L^2}^2+\|\na \bar
v(t)\|_{L^2}^2=-\bigl((\r-1)\p_tv_L\ |\ \bar{v}\bigr)_{L^2}-\bigl(\r
v\cdot\na v_L\ |\ \bar{v}\bigr)_{L^2}. \eeq By applying the 2-D
interpolation inequality that \beq\label{S2eq6} \|f\|_{L^q}\leq
C\|f\|_{L^2}^{\f2q}\|\na f\|_{L^2}^{1-\f2q}\quad \mbox{for}\
\forall\ q\in ]2,\infty[, \eeq and Young's inequality, we obtain
\beno
\begin{split}
\bigl|\bigl((\r-1)\p_tv_L\ |\ \bar{v}\bigr)_{L^2}\bigr|\leq
&\|\r-1\|_{L^2}\|\D v_L\|_{L^p}\|\bar{v}\|_{L^{\f{2p}{p-2}}}\\
\leq &C\Bigl(\|\r_0-1\|_{L^2}^2\|\D
v_L\|_{L^p}^{2-\f{r(p-2)}p}\Bigr)^{\f12}\\
&\qquad\times \Bigl(\|\D
v_L\|_{L^p}^r\|\bar{v}\|_{L^2}^2\Bigr)^{\f{p-2}{2p}}\Bigl(\|\na
\bar{v}\|_{L^2}^2\Bigr)^{\f1p}\\
\leq &\f14\|\na \bar{v}\|_{L^2}^2+C\Bigl(\|\r_0-1\|_{L^2}^2\|\D
v_L\|_{L^p}^{2-\f{r(p-2)}p}+\|\D
v_L\|_{L^p}^r\|\bar{v}\|_{L^2}^2\Bigr),
\end{split}
\eeno for any $r\in ]1,2[.$

Whereas it is easy to observe that \beno
\begin{split}
\bigl|\bigl(\r \bar{v}\cdot\na v_L\ |\ \bar{v}\bigr)_{L^2}\bigr|\leq
& C\|\na v_L\|_{L^p}\|\bar{v}\|_{L^{\f{2p}{p-1}}}^2\\
\leq &C\|\na v_L\|_{L^p}\|\bar
v\|_{L^2}^{2\bigl(1-\f1p\bigr)}\|\na\bar v\|_{L^2}^{\f2p}\\
\leq & \f14\|\na \bar{v}\|_{L^2}^2+C \|\na
v_L\|_{L^p}^{\f{p}{p-1}}\|\bar v\|_{L^2}^2.
\end{split}
\eeno

In order the deal with the last term in \eqref{S2eq5}, we need to
use the claim that \beq\label{S2eq7} \|v_L\otimes
v_L\|_{L^{\f{p}2}_t(\dB^1_{2,\f{p}2})}\leq
C\|v_0\|_{L^p}^2\quad\mbox{for}\ \  \forall\ p\in ]2,4[. \eeq We
admit this inequality for the time being and continue to handle the
last term in \eqref{S2eq5}. Since $p\in ]2,4[,$ we have \beno
\begin{split}
\bigl|\bigl(\r v_L\cdot\na v_L\ |\ \bar{v}\bigr)_{L^2}\bigr|\leq
&\|v_L\otimes v_L\|_{\dH^1}\|\bar v\|_{L^2}\\
\leq &C\Bigl(\|v_L\otimes v_L\|_{\dB^1_{2,\f{p}2}}^{\f{p}2}\|\bar
v\|_{L^2}^2+\|v_L\otimes v_L\|_{\dB^1_{2,\f{p}2}}^{2-\f{p}2}\Bigr).
\end{split}
\eeno Substituting the above estimates into \eqref{S2eq5} gives rise
to \beq \label{S2eq8}
\f{d}{dt}\|\sqrt{\r}\bar{v}(t)\|_{L^2}^2+\|\na\bar{v}\|_{L^2}^2\leq
C\bigl(f_1(t)\|\sqrt{\r}\bar{v}(t)\|_{L^2}^2+f_2(t)\bigr), \eeq
where $f_1(t), f_2(t)$ are defined by \beno &&f_1(t)=\|\D
v_L(t)\|_{L^p}^r+\|\na v_L(t)\|_{L^p}^{\f{p}{p-1}}+\|v_L\otimes
v_L\|_{\dB^1_{2,\f{p}2}}^{\f{p}2},\\
&&f_2(t)=\|\D v_L(t)\|_{L^p}^{2-\f{r(p-2)}p}+\|v_L\otimes
v_L\|_{\dB^1_{2,\f{p}2}}^{2-\f{p}2}\quad\mbox{for}\ r\in ]1,2[.
\eeno By applying Lemma \ref{S2lem1} and \eqref{S2eq7}, we achieve
\beno
\begin{split} \int_0^tf_1(t')\,dt'\leq &
C\Bigl(\|v_0\|_{W^{1,p}}^r+\Bigl(\int_0^t\|\na
v_L(t')\|_{L^p}^{\f{2p}{p-1}}\,dt'\Bigr)^{\f12}\sqrt{t}+\|v_0\|_{L^p}^p\Bigr)\\
\leq &
C\Bigl(\|v_0\|_{W^{1,p}}^r+\|v_0\|_{W^{1,p}}^{\f{p}{p-1}}\sqrt{t}+\|v_0\|_{L^p}^p\Bigr),
\end{split}
\eeno and since $p\in ]2,4[, r\in ]1,2[,$ we have
$2-\f{r(p-2)}{p}\in ]1,2[$ and \beno
\begin{split} \int_0^tf_2(t')\,dt'\leq &
C\|v_0\|_{W^{1,p}}^{2-\f{r(p-2)}{p}}+C\Bigl(\int_0^t\|v_L\otimes
v_L(t')\|_{\dB^1_{2,\f{p}2}}^{\f{p}2}\,dt'\Bigr)^{\f{4-p}{p}}t^{2\bigl(1-\f2p\bigr)}\\
\leq  &
C\Bigl(\|v_0\|_{W^{1,p}}^{2-\f{r(p-2)}{p}}+\|v_0\|_{L^p}^{8-2p}t^{2\bigl(1-\f2p\bigr)}\Bigr).
\end{split}
\eeno  Therefore by applying Gronwall's inequality to \eqref{S2eq8},
we obtain \eqref{S2eq9}.
\end{proof}

Let us now turn to the proof of \eqref{S2eq7}.

\begin{proof}[Proof of \eqref{S2eq7}] We first get, by applying
Bony's decomposition \eqref{bony} in the homogeneous context to
$v_L\otimes v_L,$ that \beno v_L\otimes v_L=2\dot T_{v_L}v_L+\dot
R(v_L,v_L). \eeno Due to $v_0\in \dB^0_{p,p}$ and $p\in ]2,4[,$ we
deduce from Lemma \ref{lem2.1} that \beno\begin{split} &\|\dot
S_{j'-1}v_L\|_{L^{\f{2p}{p-2}}}\lesssim \sum_{\ell\leq
j'-2}2^{\ell\bigl(\f4p-1\bigr)}c_{\ell,p}\|v_0\|_{\dB^0_{p,p}}\lesssim
c_{j',p}2^{j'\bigl(\f4p-1\bigr)}\|v_0\|_{L^p} \andf\\
&\|\dot\D_{j'}v_L\|_{L^{\f{p}2}_t(L^p)}\lesssim
\bigl\|e^{-c2^{2j'}t}\dot\D_{j'}v_0\|_{L^{\f{p}2}_t(L^p)}\lesssim
c_{j',p}2^{-\f{4j'}p}\|v_0\|_{L^p}, \end{split} \eeno from which, we
infer \beno
\begin{split}
\bigl\|\dot\D_j(\dot T_{v_L}v_L)\bigr\|_{L^{\f{p}2}_t(L^2)}\lesssim
&\sum_{|j'-j|\leq
4}\|\dot S_{j'-1}v_L\|_{L^\infty_t(L^{\f{2p}{p-2}})}\|\dot\D_{j'}v_L\|_{L^{\f{p}2}_t(L^p)}\\
\lesssim &\sum_{|j'-j|\leq 4}
c_{j',\f{p}2}2^{-j'}\|v_0\|_{L^p}^2\lesssim
c_{j,\f{p}2}2^{-j}\|v_0\|_{L^p}^2,
\end{split}
\eeno where $\bigl(c_{j,p}\bigr)_{j\in\Z}$ denotes a unit  generic
element of $\ell^p(\Z).$

 Whereas by applying Lemma \ref{lem2.1} once
again, one has \beno
\begin{split}
\bigl\|\dot\D_j\dot R(v_L,v_L)\|_{L^{\f{p}2}_t(L^2)}\lesssim &
2^{j'\bigl(\f4p-1\bigr)}\sum_{j'\geq
j-3}\|\dot\D_{j'}v_L\|_{L^{\f{p}2}_t(L^p)}\|\wt{\dot\D}_{j'}v_L\|_{L^\infty_t(L^p)}\\
\lesssim & 2^{j'\bigl(\f4p-1\bigr)}\sum_{j'\geq j-3}
c_{j',\f{p}2}2^{-\f{4j'}p}\|v_0\|_{L^p}^2\\
\lesssim & c_{j,\f{p}2}2^{-j}\|v_0\|_{L^p}^2.
\end{split}
\eeno This proves \eqref{S2eq7}.
\end{proof}

\smallbreak

\begin{lem}\label{S2lem2}
{\sl For any  $r\in ]1,2[$ and $ p\in ]1,\infty[,$ one has
\begin{equation}\label{S2eq14}
\Bigl\| \int^t_0 \nabla e^{(t-t')\Delta} f(t')\,dt'
\Bigr\|_{L^{\frac{2r}{2-r}}_T (L^p)}+\Bigl\| \int^t_0 \nabla
e^{(t-t')\Delta} f(t')\,dt' \Bigr\|_{L^{q}_T (L^{2p})} \lesssim
\|f\|_{L^r_T(L^p)},
\end{equation} for $q$ given by $\f1q=\f1r-\f12\bigl(1-\f1p\bigr).$}
\end{lem}

\begin{proof}
As a matter of fact, note that
\begin{align*}
\nabla e^{(t-t')\Delta}f(t') &= \frac{\sqrt\pi}{(4\pi (t-t'))^{3/2}}
\int_{\R^2} \frac{x-y}{2(t-t')^{1/2}} \exp\Bigl \{
-\frac{|x-y|^2}{4(t-t')} \Bigr\} \, f(t',y)\,dy
\\
&\eqdefa\frac{\sqrt\pi}{(4\pi (t-t'))^{3/2}} K\Bigl(
\frac{\cdot}{2(t-t')^{1/2}} \Bigr) \ast \, f(t',x) .
\end{align*}
Then applying Young's inequality and then Hardy's inequality leads
to
\begin{align*}
\Bigl\| \int^t_0 \nabla e^{(t-t')\Delta} f(t')\,dt'
\Bigr\|_{L^{\frac{2r}{2-r}}_T (L^p)} &\lesssim \Bigl\| \int^t_0
(t-t')^{-\frac 12} \|f(t')\|_{L^p}\,dt'
\Bigr\|_{L^{\frac{2r}{2-r}}_T }
\\
&\lesssim \|f\|_{L^r_T(L^p)},
\end{align*}
and
\begin{align*}
\Bigl\| \int^t_0 \nabla e^{(t-t')\Delta} f(t')\,dt' \Bigr\|_{L^{q}_T
(L^{2p})} &\lesssim \Bigl\| \int^t_0 (t-t')^{-\frac
12\bigl(1+\f1p\bigr)} \|f(t')\|_{L^p}\,dt' \Bigr\|_{L^{q}_T }
\\
&\lesssim \|f\|_{L^r_T(L^p)}.
\end{align*}
This proves \eqref{S2eq14}. \end{proof}

\begin{prop}\label{S2prop1}
{\sl Let $r, \s$ be  any numbers so that $r\in ]1,2[$ and $\s\in
\bigl]\f{2p}{p+2}, p\bigr[.$ Then under the assumptions of
Proposition \ref{S2prop2}, if we assume moreover that
$\|a_0\|_{L^\infty}$ is sufficiently small,  there holds
\beq\label{S4eq2}
\begin{split}
J_0\eqdefa 1+\|(\d_t v,\Delta v,\nabla\pi)\|_{L^{r}_t(L^p)}&
+\|\nabla v\|_{L^\s_t(L^\infty)\cap L^{\f{2r}{2-r}}_t(L^p)}
\\
&+\|v\|_{L^{{\f{2r}{2-r}}}_t(L^\infty)} +\|X\|_{L^\infty_t(W^{1,p})}
\leq \cH_0(t).
\end{split}
\eeq}
\end{prop}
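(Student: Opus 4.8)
The plan is to estimate the solution $(a,v,\pi)$ of \eqref{INS} (equivalently \eqref{inNS}) in the maximal-regularity framework for the evolutionary Stokes system, bootstrapping from the energy bound \eqref{S2eq9} already established in Proposition \ref{S2prop2}. First I would rewrite the momentum equation in \eqref{INS} by isolating the constant-coefficient Stokes operator: writing $\bar v = v - v_L$ as in the proof of Proposition \ref{S2prop2}, I would recast the equation as $\partial_t \bar v - \Delta \bar v + \nabla\pi = F$ with $\div \bar v = 0$, where the forcing $F$ collects the convection term $-v\cdot\nabla v$, the density-dependent remainder $-a(\nabla\pi-\Delta v)$, and the terms involving $v_L$. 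By Duhamel's formula $\bar v(t) = \int_0^t e^{(t-t')\Delta}\mathbb{P}F(t')\,dt'$ (with $\mathbb{P}$ the Leray projector), the estimates for the heat kernel in Lemma \ref{S2lem2} convert $L^r_t(L^p)$ control of $F$ into the $L^{2r/(2-r)}_t(L^p)$ and $L^q_t(L^{2p})$ bounds on $\nabla v$ appearing in \eqref{S4eq2}, while the maximal $L^r_t(L^p)$ regularity for the Stokes operator controls $(\partial_t v, \Delta v, \nabla\pi)$ simultaneously. The $L^\sigma_t(L^\infty)$ bound on $\nabla v$ and the $L^{2r/(2-r)}_t(L^\infty)$ bound on $v$ would follow by Sobolev embedding $W^{1,p}\hookrightarrow L^\infty$ (valid since $p>2$) together with Lemma \ref{lem2.1}, combined with the gradient estimates from Lemma \ref{S2lem2} at the exponent $2p$.

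The core difficulty is the closure of the estimate for $F$ in $L^r_t(L^p)$. The convection term is handled by $\|v\cdot\nabla v\|_{L^r_t(L^p)}\le \|v\|_{L^{2r/(2-r)}_t(L^\infty)}\|\nabla v\|_{L^2_t(L^p)}$, using the energy control of $\|\nabla\bar v\|_{L^2_t(L^2)}$ from \eqref{S2eq9} interpolated with higher integrability, and the fact that $v_L$ is controlled by Lemma \ref{S2lem1}; this is where the restriction $\sigma\in\bigl]\tfrac{2p}{p+2},p\bigr[$ and the specific endpoint $\tfrac{2r}{2-r}$ are dictated. The genuinely delicate term is the pressure-density coupling $a(\nabla\pi-\Delta v)$: one controls it by $\|a\|_{L^\infty_t(L^\infty)}\,\|(\nabla\pi,\Delta v)\|_{L^r_t(L^p)}$, and since $\|a\|_{L^\infty_t(L^\infty)}=\|a_0\|_{L^\infty}$ is preserved by the transport equation and assumed small, this term can be absorbed into the left-hand side of the maximal-regularity inequality. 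This smallness absorption is precisely why the hypothesis that $\|a_0\|_{L^\infty}$ be sufficiently small is imposed, and it is the main obstacle: one must verify that the implicit constant in the Stokes maximal-regularity estimate on $L^r_t(L^p)$ is independent of $t$, so that the absorption is uniform in time.

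Having absorbed the small term, I would collect the remaining contributions to $F$ — those built from $v_L$ and from the energy-level quantities — and bound their time integrals using Lemma \ref{S2lem1}, the product estimate \eqref{S2eq7}, and \eqref{S2eq9}, obtaining a bound in terms of an increasing function of $t$ depending only on the initial data. Feeding this back through Lemma \ref{S2lem2} and the maximal-regularity estimate, and adding the trivial contribution from $v_L$ itself (Lemma \ref{S2lem1}), yields the full left-hand side of \eqref{S4eq2} bounded by some $\cH_0(t)$. Finally, the bound $\|X\|_{L^\infty_t(W^{1,p})}$ is obtained separately: from the transport equation \eqref{S1eq2} for $X$, a Gronwall argument in $W^{1,p}$ gives $\|X(t)\|_{W^{1,p}}\le \|X_0\|_{W^{1,p}}\exp\bigl(C\int_0^t\|\nabla v\|_{L^\infty}\,dt'\bigr)$, and the exponent is finite because $\nabla v\in L^\sigma_t(L^\infty)$ with $\sigma>1$ has just been established.
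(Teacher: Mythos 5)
Your overall architecture (Duhamel for the Stokes part, maximal regularity for $(\p_t v,\D v,\na\pi)$, absorption of $a(\na\pi-\D v)$ by the smallness of $\|a_0\|_{L^\infty}$, divergence equation for the pressure, transport plus Gronwall for $X$) matches the paper's, but there is a genuine gap at the step you yourself flag as "the core difficulty": your estimate of the convection term is circular. You bound $\|v\cdot\nabla v\|_{L^r_t(L^p)}\le \|v\|_{L^{2r/(2-r)}_t(L^\infty)}\|\nabla v\|_{L^2_t(L^p)}$, but both factors are part of the \emph{conclusion} of Proposition \ref{S2prop1}, not consequences of Proposition \ref{S2prop2}: the energy estimate \eqref{S2eq9} only controls $\|\bar v\|_{L^\infty_t(L^2)}$ and $\|\na \bar v\|_{L^2_t(L^2)}$, and no interpolation of these $L^2$-based quantities yields $\na v\in L^2_t(L^p)$ or $v\in L^{2r/(2-r)}_t(L^\infty)$. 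Moreover, unlike the term $a(\na\pi-\D v)$, the quadratic term carries no small prefactor (only $\|a_0\|_{L^\infty}$ is small; $v_0$ is an arbitrary large $W^{1,p}$ field), so it cannot be absorbed into the left-hand side of the maximal regularity inequality, and a local-in-time bootstrap would give no global-in-time bound. Incidentally, this — not the time-uniformity of the maximal regularity constant — is the main obstacle.

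The paper's key device, absent from your proposal, is an exponential time weight: set $f(t)=\|v(t)\|_{L^{2p}}^{\f{2p}{p-1}}$ and $v_\la(t)=v(t)\exp\bigl(-\la\int_0^tf(t')\,dt'\bigr)$. The point is that $\int_0^tf(t')\,dt'$ \emph{is} controlled a priori: by 2-D interpolation $\|\bar v\|_{L^{2p}}\lesssim \|\bar v\|_{L^2}^{\f1p}\|\na\bar v\|_{L^2}^{1-\f1p}$ and \eqref{S2eq9}, one gets $\|v\|_{L^{2p/(p-1)}_t(L^{2p})}^{2p/(p-1)}\leq \cH_0(t)$. Writing Duhamel for $v_\la$, the weighted convection term satisfies $\bigl(\int_0^te^{-\la r\int_{t'}^tf}\|v\cdot\na v_\la(t')\|_{L^p}^r\,dt'\bigr)^{1/r}\leq C\la^{-\f{p-1}{2p}}\|\na v_\la\|_{L^q_t(L^{2p})}$, so it is absorbed by taking $\la$ large — absorption by a large parameter rather than by small data — at the harmless price of the factor $e^{\la\int_0^tf}\leq e^{\la\cH_0(t)}$ in the final bound. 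The same weighted argument is run a second time on the equation for $\bar v=v-v_L$ (where the $L^2$ energy information lives) to obtain $\na\bar v\in L^{2r/(2-r)}_t(L^p)$ and hence, via $\|\bar v\|_{L^\infty}\lesssim\|\bar v\|_{L^2}^{\f{p-2}{2(p-1)}}\|\na\bar v\|_{L^p}^{\f{p}{2(p-1)}}$, the bound $\|v\|_{L^{2r/(2-r)}_t(L^\infty)}\leq\cH_0(t)$; your route via $W^{1,p}\hookrightarrow L^\infty$ for $v$ itself would also need $v\in L^{2r/(2-r)}_t(L^p)$, which again must come from the energy part. Without the weight (or an equivalent time-splitting along the a priori controlled quantity $\int_0^t\|v\|_{L^{2p}}^{2p/(p-1)}\,dt'$), your scheme does not close.
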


\begin{proof} We divide the proof of this proposition into the
following steps:

\no$\bullet$ \underline{The estimate of $\|(\d_t v,\Delta
v,\nabla\pi)\|_{L^{r}_t(L^p)}$}

We first rewrite the momentum equation of \eqref{INS} as
$$
v(t)=e^{t\Delta}v_0 +\int^t_0 e^{(t-t')\Delta}\Bigl( -v\cdot\nabla v
+a\Delta v-(1+a)\nabla\pi \Bigr)(t')\,dt'.
$$
For any $\la>0,$ we set \beq \label{S2eq10} f(t)\eqdefa
\|v(t)\|_{L^{2p}}^{\f{2p}{p-1}}\andf v_\la(t)\eqdefa
v(t)\exp\bigl(-\la\int_0^tf(t')\,dt'\bigr). \eeq Then $v_\la$ solve
\beq \label{S2eq11} v_\la(t)=e^{t\Delta}v_{0,\la} +\int^t_0
e^{-\la\int_{t'}^tf(t'')\,dt''} e^{(t-t')\Delta}\Bigl( -v\cdot\nabla
v_\la +a\Delta v_\la-(1+a)\nabla\pi_\la \Bigr)(t')\,dt'. \eeq
Whereas by taking space divergence to the momentum equation of
\eqref{INS}, one has \beq\label{S2eq11a} \D\pi_\la=-\dive(v\cdot\na
v_\la)+\dive\bigl(a(\D v_\la-\na\pi_\la)\bigr), \eeq from which and
$\|a_0\|_{L^\infty}$ is sufficiently small, we infer
\beq\label{S2eq12} \|\na\pi_\la(t)\|_{L^p}\leq C\bigl(\|v\cdot\na
v_\la(t)\|_{L^p}+\|a_0\|_{L^\infty}\|\D v_\la(t)\|_{L^p}\bigr). \eeq

In view of \eqref{S2eq11}, we get, by applying maximal regularity
estimate to heat semigroup (see Theorem 7.3 of \cite{LPG} for
instance), and the Estimates \eqref{S2eq0},
 \eqref{S2eq14} and \eqref{S2eq12},
that \beno
\begin{split}
\|\D v_\la&\|_{L^r_t(L^p)}+\|\bigl(\p_t
v\bigr)_\la\|_{L^r_t(L^p)}+\|\na
v_\la\|_{L^{\f{2r}{2-r}}_t(L^p)}+\|\na v_\la\|_{L^q_t(L^{2p})}\\
\leq &\|\D v_L \|_{L^r_t(L^p)}+\|\p_t v_L \|_{L^r_t(L^p)}+\|\na v_L
\|_{L^{\f{2r}{2-r}}_t(L^p)}+\|\na v_L
\|_{L^{q}_t(L^{2p})}\\
&+C\biggl(\int_0^te^{-\la r\int_{t'}^tf(t'')\,dt''}\Bigl(\|v\cdot\na
v_\la(t')\|_{L^p}^r+\|a(t')\|_{L^\infty}^r\|\D
v_\la(t')\|_{L^p}^r\\
&\qquad\qquad\qquad\qquad\qquad\qquad\qquad\quad+\bigl(1+\|a(t')\|_{L^\infty}^r\bigr)\|\na\pi(t')\|_{L^p}^r\Bigr)\,dt'\biggr)^{\f1r}\\
\leq& C\biggl(\|v_0\|_{W^{1,p}}+\|a_0\|_{L^\infty}\|\D
v_\la\|_{L^r_t(L^p)}\\
&\qquad\qquad\qquad\qquad+\Bigl(\int_0^te^{-\la
r\int_{t'}^tf(t'')\,dt''}\|v\cdot\na
v_\la(t')\|_{L^p}^r\,dt'\Bigr)^{\f1r}\biggr),
\end{split}
\eeno where  $\f1q=\f1r-\f12\bigl(1-\f1p\bigr)$ and thus
$q\in\bigl]\f{2p}{p+1},2p\bigr[.$ Hence whenever
$\|a_0\|_{L^\infty}$ is sufficiently small, we obtain \beno
\begin{split}
\|\D v_\la\|_{L^r_t(L^p)}+&\|\bigl(\p_t
v\bigr)_\la\|_{L^r_t(L^p)}+\|\na
v_\la\|_{L^{\f{2r}{2-r}}_t(L^p)}+\|\na v_\la\|_{L^q_t(L^{2p})}\\
\leq
&C\biggl(\|v_0\|_{W^{1,p}} +\Bigl(\int_0^te^{-\la
r\int_{t'}^tf(t'')\,dt''}\|v\cdot\na
v_\la(t')\|_{L^p}^r\,dt'\Bigr)^{\f1r}\biggr).
\end{split}\eeno
Whereas due to $\f1r=\f1q+\f12\bigl(1-\f1p\bigr),$  it is easy to
observe that \beno
\begin{split}
\Bigl(\int_0^t&e^{-\la r\int_{t'}^tf(t'')\,dt''}\|v\cdot\na
v_\la(t')\|_{L^p}^r\,dt'\Bigr)^{\f1r}\\
\leq
&\Bigl(\int_0^te^{-\f{2p\la}{p-1}
\int_{t'}^tf(t'')\,dt''}\|v(t')\|_{L^{2p}}^{\f{2p}{p-1}}\,dt'\Bigr)^{\f{p-1}{2p}}\|\na
v_\la\|_{L^q_t(L^{2p})}\\
\leq &\f{C}{\la ^{\f{p-1}{2p}}}\|\na v_\la\|_{L^q_t(L^{2p})}.
\end{split}
\eeno  As a consequence, we obtain \beq \label {S2eq13}
\begin{split}
\|\D v_\la\|_{L^r_t(L^p)}+\|\bigl(\p_t
v\bigr)_\la\|_{L^r_t(L^p)}+\|\na
v_\la\|_{L^{\f{2r}{2-r}}_t(L^p)}+&\|\na
v_\la\|_{L^q_t(L^{2p})}\\
\leq &C\Bigl(\|v_0\|_{W^{1,p}} +\f{1}{\la ^{\f{p-1}{2p}}}\|\na
v_\la\|_{L^q_t(L^{2p})}\Bigr).
\end{split}\eeq

Then in view of \eqref{S2eq10}, by taking $\la$ to be sufficiently
large in \eqref{S2eq13}, we achieve \beno
\begin{split} & e^{-\la\int_0^tf(t')\,dt'}\bigl(
\|\D v\|_{L^r_t(L^p)}+\|\p_t v\|_{L^r_t(L^p)}+\|\na
v\|_{L^{\f{2r}{2-r}}_t(L^p)}+\|\na
v\|_{L^q_t(L^{2p})}\bigr)\\
&\qquad\qquad \leq \|\D v_\la\|_{L^r_t(L^p)}+\|\bigl(\p_t
v)_\la\|_{L^r_t(L^p)}+\|\na v_\la\|_{L^{\f{2r}{2-r}}_t(L^p)}+\|\na
v_\la\|_{L^q_t(L^{2p})}\leq C\|v_0\|_{W^{1,p}}.
\end{split}
\eeno While it follows from  \eqref{S2eq9} and $v=\bar{v}+v_L$
 that \beno
\begin{split}
\|v\|_{L^{\f{2p}{p-1}}_t(L^{2p})}^{\f{2p}{p-1}}\leq
&\int_0^t\Bigl(\|\bar{v}(t')\|_{L^{2p}}+\|v_L(t')\|_{L^{2p}}\Bigr)^{\f{2p}{p-1}}\,dt'\\
\leq
&C\Bigl(\|\bar{v}\|_{L^\infty_t(L^2)}^{\f2{p-1}}\|\na\bar{v}\|_{L^2_t(L^2)}^2+t\|v_L\|_{L^\infty_t(L^p)}^2\|\na
v_L\|_{L^\infty_t(L^p)}^{\f2{p-1}}\Bigr)\\
\leq &C\Bigl(\h^{\f{2p}{p-1}}(t)+t\|v_0\|_{L^p}^2\|\na
v_0\|_{L^p}^{\f2{p-1}}\Bigr)\leq \cH_0(t),
\end{split}
\eeno so that we obtain \beq\label{S2eq15}\begin{split} \|\D
v\|_{L^r_t(L^p)}+\|\p_t v\|_{L^r_t(L^p)}+&\|\na
v\|_{L^{\f{2r}{2-r}}_t(L^p)}+\|\na v\|_{L^q_t(L^{2p})}\\
\leq& Ce^{\la \int_0^tf(t')\,dt'}\|v_0\|_{W^{1,p}}\leq Ce^{\la
\cH_0(t)}\|v_0\|_{W^{1,p}} \leq \cH_0(t). \end{split} \eeq This
together with \eqref{S2eq12} ensures that \beq
\label{S2eq15a}\begin{split} \|\na\pi\|_{L^r_t(L^p)}\leq
C\Bigl(\|v\|_{L^{\f{2p}{p-1}}_t(L^{2p})}\|\na
v\|_{L^q_t(L^{2p})}+\|\D v\|_{L^r_t(L^p)}\Bigr)\leq \cH_0(t).
\end{split}
\eeq Moreover, there holds \beq \label{S2eq15b}\begin{split}
&\|\na v\|_{L^{\f{2pr}{2(p+r)-pr}}_t(L^\infty)}\leq C\|\na
v\|_{L^{\f{2r}{2-r}}_t(L^p)}^{1-\f2p}\|\D
v\|_{L^r_t(L^p)}^{\f2p}\leq \cH_0(t)\quad\mbox{for}\ \ \forall \
r\in ]1,2[.
\end{split}
\eeq It is easy to observe that when $r$ varies from $1$ to $2,$ we
have $\s=\f{2pr}{2(p+r)-pr}\in \bigl]\f{2p}{2+p}, p\bigr[.$

\no$\bullet$ \underline{The estimate of
$\|v\|_{L^{\f{2r}{2-r}}_t(L^\infty)}$}

In view of \eqref{S2eq4c}, we get by a  similar  derivation of
\eqref{S2eq11} that \beno \bar{v}_\la(t)=-\int^t_0
e^{-\la\int_{t'}^tf(t'')\,dt''}
e^{(t-t')\Delta}\Bbb{P}\Bigl((\r-1)(\p_t\bar{v}+\p_tv_{L})_\la+\r
v\cdot\nabla \bar{v}_\la +\r v\cdot\na v_{L,\la} \Bigr)(t')\,dt',
\eeno where $\Bbb{P}\eqdefa Id-\na\D^{-1}\dive$ denotes the Leray
projection operator to the divergence free vector space. Then
applying the maximal regularity estimate to heat semi-group and
Lemma \ref{S2lem2} yields \beno
\begin{split}
\|\D \bar{v}_\la&\|_{L^r_t(L^p)}+\|\bigl(\p_t
\bar{v}\bigr)_\la\|_{L^r_t(L^p)}+\|\na
\bar{v}_\la\|_{L^{\f{2r}{2-r}}_t(L^p)}\\
\leq &C\bigl(\|(\r-1)(\p_t\bar{v})_\la\|_{L^r_t(L^p)}+\|(\r-1)\p_t
v_L
\|_{L^r_t(L^p)}\bigr)\\
&+C\biggl(\int_0^te^{-\la r\int_{t'}^tf(t'')\,dt''}\Bigl(\|v\cdot\na
\bar v_\la(t')\|_{L^p}^r+\| v\cdot\na
v_{L,\la}(t')\|_{L^p}^r\Bigr)\,dt'\biggr)^{\f1r}.
\end{split}
\eeno
By using Sobolev embedding Theorem, one has
\beno \|v\cdot\na
\ov v_\la(t')\|_{L^p}\leq
C\|v(t')\|_{L^{2p}}\|\na\bar{v}_\la(t')\|_{L^p}^{1-\f1p}\|\D
\bar{v}_\la(t')\|_{L^p}^{\f1p}, \eeno so that thanks to
\eqref{S2eq10},  we get, by applying H\"older's inequality, that
\beno
\begin{split}
\Bigl(\int_0^t&e^{-\la r\int_{t'}^tf(t'')\,dt''}\|v\cdot\na
\bar{v}_\la(t')\|_{L^p}^r\,dt'\Bigr)^{\f1r}\\
\leq &C\Bigl(\int_0^t
e^{-\f{\la pr}{p-1} \int_{t'}^tf(t'')\,dt''}\|v(t')\|_{L^{2p}}^{\f{
pr}{p-1}}\|\na\bar{v}_\la(t')\|_{L^p}^r\,dt'\Bigr)^{\f1r}+\f14\|\D
\bar{v}_\la(t')\|_{L^r_t(L^p)}\\
\leq &C \Bigl(\int_0^t e^{-\frac{2p\lambda}{p-1}
\int_{t'}^tf(t'')\,dt''}
\|v(t')\|_{L^{2p}}^{\f{2p}{p-1}}\,dt'\Bigr)^{\f12}
\|\na\bar{v}_\la(t')\|_{L^{\f{2r}{2-r}}_t(L^p)}
+\f14\|\D\bar{v}_\la(t')\|_{L^r_t(L^p)}\\
\leq
&\f{C}{\la^{\f 12}}\|\na\bar{v}_\la\|_{L^{\f{2r}{2-r}}_t(L^p)}+\f14\|\D
\bar{v}_\la\|_{L^r_t(L^p)}.
\end{split}
\eeno Along the same line, we have \beno \Bigl(\int_0^te^{-\la
r\int_{t'}^tf(t'')\,dt''}\|v\cdot\na
v_{L,\la}(t')\|_{L^p}^r\,dt'\Bigr)^{\f1r}\leq
\f{C}{\la^{\f12}}\|\na{v}_L\|_{L^{\f{2r}{2-r}}_t(L^p)}+\f14\|\D
{v}_L\|_{L^r_t(L^p)}. \eeno As a consequence, we obtain \beno
\begin{split}
\|\D &\bar{v}_\la\|_{L^r_t(L^p)}+\|\bigl(\p_t
\bar{v}\bigr)_\la\|_{L^r_t(L^p)}+\|\na
\bar{v}_\la\|_{L^{\f{2r}{2-r}}_t(L^p)}\\
\leq &C\Bigl(\|\r_0-1\|_{L^\infty}\|\bigl(\p_t
\bar{v}\bigr)_\la\|_{L^r_t(L^p)}
+\frac{1}{\lambda^{\frac 12}}\|\na{v}_L\|_{L^{\f{2r}{2-r}}_t(L^p)}+\|\D
{v}_L\|_{L^r_t(L^p)}+\f{1}{\la^{\f12}}\|\na\bar{v}_\la\|_{L^{\f{2r}{2-r}}_t(L^p)}\Bigr).
\end{split}
\eeno Therefore, by virtue  of \eqref{S2eq0}, by taking $\la$ large
enough and $\|\r_0-1\|_{L^\infty}$ to be sufficiently small, we
achieve \beno \|\D \bar{v}_\la\|_{L^r_t(L^p)}+\|\bigl(\p_t
\bar{v}\bigr)_\la\|_{L^r_t(L^p)}+\|\na
\bar{v}_\la\|_{L^{\f{2r}{2-r}}_t(L^p)}\leq C\|v_0\|_{W^{1,p}}. \eeno
This in particular implies \beno \|\na
\bar{v}\|_{L^{\f{2r}{2-r}}_t(L^p)}\leq C
e^{\la\int_0^tf(t')\,dt'}\|v_0\|_{W^{1,p}}\leq C\cH_0(t),\eeno from
which, \eqref{S2eq9} and the interpolation inequality that \beno
\|f\|_{L^\infty}\leq C\|f\|_{L^2}^{\f{p-2}{2(p-1)}}\|\na
f\|_{L^p}^{\f{p}{2(p-1)}},\eeno  we infer \beq\label{S2eq15c} \|\bar
v\|_{L^{\f{2r}{2-r}}_t(L^\infty)}\leq C \|\bar
v\|_{L^{\f{2r}{2-r}}_t(L^2)}^{\f{p-2}{2(p-1)}}\|\na \bar
v\|_{L^{\f{2r}{2-r}}_t(L^p)}^{\f{p}{2(p-1)}}\leq C\cH_0(t). \eeq
While it is easy to notice that \beno
\|v_L\|_{L^\infty_t(L^\infty)}\leq
C\|v_L\|_{L^\infty_t(L^p)}^{1-\f2p}\|\na
v_L\|_{L^\infty_t(L^p)}^{\f2p}\leq C\|v_0\|_{W^{1,p}}, \eeno this
together with \eqref{S2eq15c} ensures that \beq\label{S2eq15d}
\|v\|_{L^{\f{2r}{2-r}}_t(L^\infty)}\leq C\cH_0(t). \eeq

\no$\bullet$ \underline{The estimate of
$\|X\|_{L^{\infty}_t(W^{1,p})}$}

Finally, as $X$ satisfies  \eqref{S1eq2}, it follows from standard
theory on transport equation and \eqref{S2eq15b} that \beq
\label{S2eq17} \|X(t)\|_{L^p\cap L^\infty}\leq \|X_0\|_{L^p\cap
L^\infty}\exp\Bigl(\int_0^t\|\na v(t')\|_{L^\infty}\,dt'\Bigr)\leq
\cH_0(t). \eeq By taking $\p_i, i=1,2,$ to \eqref{S1eq2}, we get
$$
\d_t \d_i X+v\cdot \nabla \d_i X+\d_i v\cdot\nabla X =\d_i
X\cdot\nabla v+X\cdot\na \d_i v,
$$
from which, we get, by applying standard $L^p$ type energy estimate,
that \beno \f{d}{dt}\|\na X(t)\|_{L^p}\leq 2\|\na
v(t)\|_{L^\infty}\|\na X(t)\|_{L^p}+\|X(t)\|_{L^\infty}\|\na^2
v\|_{L^p}. \eeno Applying Gronwall's inequality and using
\eqref{S2eq15b}, \eqref{S2eq17} yields
\beq \label{S2eq18}
\begin{split} \|\na X(t)\|_{L^p}\leq &\Bigl(\|\na
X_0\|_{L^p}+\|X\|_{L^\infty_t(L^\infty)}\|\na^2
v\|_{L^1_t(L^p)}\Bigr)\exp\Bigl(2\int_0^t\|\na
v(t')\|_{L^\infty}\,dt'\Bigr)\\
\leq &\Bigl(\|\na X_0\|_{L^p}+\|X_0\|_{L^\infty}\|\na^2
v\|_{L^1_t(L^p)}\Bigr)\exp\Bigl(3\int_0^t\|\na
v(t')\|_{L^\infty}\,dt'\Bigr)\leq \cH_0(t).
\end{split} \eeq
This completes the proof of \eqref{S4eq2} and hence the proposition.
\end{proof}

\setcounter{equation}{0}
\section{The estimates of $\D\p_Xv, \p_t\p_Xv,
\na\p_Xv$}\label{S5}

In this section, we investigate the estimates of $\D\p_Xv,
\p_t\p_Xv, \na\p_Xv$ under the striated regularity  assumption of
the initial data:
\begin{equation}\label{initial:striated}
\d_{X_0}^\ell a_0\in L^\infty, \,\d_{X_0}^\ell v_0\in
W^{1-\frac{\ell}{k}\e,p}, \,\d_{X_0}^{\ell-1} X_0\in W^{2,p},
\quad\e\in \left]0,1\right[, \, \forall \ell=1,\cdots,k.
\end{equation}

\begin{lem}\label{S3lem1}
{\sl Let $r_\ell\in \bigl]1,\f{2k}{k+\ell\e}\bigr[$ and $s_\ell\in
\bigl]2,\f{2k}{\ell\e}\bigr[.$ Then under the assumption of
\eqref{initial:striated}, one has \beq\label{S3eq0} \bigl\|\D
e^{t\D}\p_{X_0}^\ell v_0\bigr\|_{L^{r_\ell}_t(L^p)}+\bigl\|\na
e^{t\D}\p_{X_0}^\ell v_0\|_{L^{s_\ell}_t(L^p)}\leq C\|\p_{X_0}^\ell
v_0\|_{W^{1-\f\ell{k}\e,p}},
\quad \forall \ell=1,\cdots,k. \eeq}
\end{lem}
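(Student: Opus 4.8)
The plan is to prove both bounds in \eqref{S3eq0} by a direct Littlewood--Paley decomposition of $g\eqdefa \p_{X_0}^\ell v_0$, tracking the reduced regularity $s\eqdefa 1-\f{\ell}{k}\e\in\, ]0,1[$ and paralleling the mechanism behind Lemma \ref{S2lem1}. Writing $g=\sum_{j\in\Z}\dot\Delta_j g$, which is legitimate since $g\in L^p\subset{\mathcal S}_h'$ on $\R^2$, I would estimate each dyadic block separately. Combining the Bernstein derivative estimates with the heat-semigroup decay on a ring, both furnished by Lemma \ref{lem2.1}, gives for every $j$ that $\|\D e^{t\D}\dot\Delta_j g\|_{L^p}\lesssim 2^{2j}e^{-c2^{2j}t}\|\dot\Delta_j g\|_{L^p}$ and $\|\na e^{t\D}\dot\Delta_j g\|_{L^p}\lesssim 2^{j}e^{-c2^{2j}t}\|\dot\Delta_j g\|_{L^p}$.

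Next I would integrate in time. Since $\|e^{-c2^{2j}t}\|_{L^m_t(\R^+)}\sim 2^{-2j/m}$ for any $m\geq1$, taking the $L^{r_\ell}_t$ and $L^{s_\ell}_t$ norms of the two block estimates yields $\|\D e^{t\D}\dot\Delta_j g\|_{L^{r_\ell}_t(L^p)}\lesssim 2^{2j(1-1/r_\ell)}\|\dot\Delta_j g\|_{L^p}$ and $\|\na e^{t\D}\dot\Delta_j g\|_{L^{s_\ell}_t(L^p)}\lesssim 2^{j(1-2/s_\ell)}\|\dot\Delta_j g\|_{L^p}$, uniformly in $t$ (replacing $]0,t[$ by $\R^+$ only enlarges the time norm, so the final constant is $t$-independent, as the right-hand side of \eqref{S3eq0} shows it should be). Summing over $j$ by the triangle inequality then reduces the whole lemma to bounding the two series $\sum_{j\in\Z}2^{2j(1-1/r_\ell)}\|\dot\Delta_j g\|_{L^p}$ and $\sum_{j\in\Z}2^{j(1-2/s_\ell)}\|\dot\Delta_j g\|_{L^p}$ by $\|g\|_{W^{s,p}}$.

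Each series I would split at $j=0$. On the low-frequency part ($j<0$) the lower constraints $r_\ell>1$ and $s_\ell>2$ make the exponents $2(1-1/r_\ell)$ and $1-2/s_\ell$ strictly positive; bounding $\|\dot\Delta_j g\|_{L^p}\lesssim\|g\|_{L^p}$, the resulting geometric series converge and are $\lesssim\|g\|_{L^p}\lesssim\|g\|_{W^{s,p}}$. On the high-frequency part ($j\geq0$) the upper constraints enter: one has $2(1-1/r_\ell)<s$ precisely when $r_\ell<\f{2k}{k+\ell\e}$, and $1-2/s_\ell<s$ precisely when $s_\ell<\f{2k}{\ell\e}$, so in both cases the exponent sits strictly below $s$. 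A H\"older inequality in $j$ against the sequence $\bigl(2^{js}\|\dot\Delta_j g\|_{L^p}\bigr)_{j\geq0}$ then bounds each high-frequency sum by its $\dB^s_{p,p}$ seminorm; since $p\geq2$ forces $W^{s,p}=F^s_{p,2}\hookrightarrow B^s_{p,p}$, this seminorm is $\lesssim\|g\|_{W^{s,p}}$. Adding the two parts gives \eqref{S3eq0} with $g=\p_{X_0}^\ell v_0$ and $s=1-\f{\ell}{k}\e$.

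I expect the only genuinely delicate point to be the low-frequency bookkeeping. Because $g$ lies merely in the inhomogeneous space $W^{s,p}$, no homogeneous seminorm $\|g\|_{\dB^\sigma_{p,1}}$ with $\sigma<s$ is available, so one cannot sum over all $j$ at once against a single homogeneous norm; this is exactly why the split at $j=0$ is forced. The lower bounds $r_\ell>1$ and $s_\ell>2$ are what let the large-time decay of the heat flow absorb the low modes in $L^p$, while the regularity $s$ absorbs the high modes through the upper bounds --- the Triebel--Lizorkin analogue of the two-endpoint argument used for $W^{1,p}$ in Lemma \ref{S2lem1}.
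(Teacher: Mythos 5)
Your proof is correct, but it takes a genuinely different route from the paper's. The paper first embeds $W^{1-\f{\ell\e}{k},p}\hookrightarrow \dB^0_{p,p}\cap\dB^{1-\f{\ell\e}{k}}_{p,p}$ and then invokes the heat-kernel characterization of negative-index homogeneous Besov norms (Theorem 2.34 of \cite{BCD}) to obtain weighted space-time bounds such as $\bigl\|t^{\f12\bigl(1+\f{\ell\e}{k}\bigr)-\f1p}\D e^{t\D}\p_{X_0}^\ell v_0\bigr\|_{L^p_t(L^p)}\lesssim\|\p_{X_0}^\ell v_0\|_{W^{1-\f{\ell\e}{k},p}}$; it then splits the \emph{time} interval at $t\wedge 1$ and applies H\"older in time on each piece, the higher regularity index handling small times and the lower one large times. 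You instead split in \emph{frequency}: Bernstein and the heat decay from Lemma \ref{lem2.1} give the block estimates, the low frequencies ($j<0$) are absorbed by $\|g\|_{L^p}$ using only $r_\ell>1$, $s_\ell>2$, and the high frequencies ($j\geq 0$) by the Besov norm using the upper bounds on $r_\ell,s_\ell$ together with the embedding $W^{s,p}=F^s_{p,2}\hookrightarrow B^s_{p,p}$, valid since $p\geq 2$. The numerology is of course identical --- your condition $2\bigl(1-\f1{r_\ell}\bigr)<1-\f{\ell\e}{k}$ is exactly the paper's $\bar{r}_\ell\bigl(\f12\bigl(1+\f{\ell\e}{k}\bigr)-\f1p\bigr)<1$ --- but your argument is more self-contained (it needs only the Bernstein-type Lemma \ref{lem2.1}, not the BCD heat characterization) and it makes transparent which hypothesis does what: the lower bounds on $r_\ell,s_\ell$ control low frequencies (equivalently, large times), while the upper bounds control high frequencies (small times). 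What the paper's route buys in exchange is brevity given the \cite{BCD} machinery, and it reuses verbatim the template already set up in the proof of Lemma \ref{S2lem1}.
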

\begin{proof}
Since $\p_{X_0}^\ell v_0\in W^{1-\f{\ell}k\e,p}\hookrightarrow
\dB^0_{p,p}\cap \dB^{1-\f{\ell}k\e}_{p,p},$ so that
$\na\p_{X_0}^\ell v_0\in \dB^{-1}_{p,p}\cap
\dB^{-\f{\ell}k\e}_{p,p}$ and $\D\p_{X_0}v_0\in \dB^{-2}_{p,p}\cap
\dB^{-1-\f{\ell}k\e}_{p,p}.$ And therefore, according to Theorem
2.34 of \cite{BCD}, we infer \beq\label{S3eq3}
\begin{split} \bigl\|\D
e^{t\D}&\p_{X_0}^\ell v_0\bigr\|_{L^{r_\ell}_t(L^p)}\\
\leq & \bigl\|t^{\f12\bigl(1+\f\ell{k}\e\bigr)-\f1p}\D
e^{t\D}\p_{X_0}^\ell v_0\bigr\|_{L^p(]0,t\wedge
1[\times\R^2)}\bigl\|t^{-\bigl(\f12\bigl(1+\f\ell{k}\e\bigr)-\f1p\bigr)}\bigr\|_{L^{\bar{r}_\ell}(]0,t\wedge
1[)}\\
&+\bigl\|t^{1-\f1p}\D e^{t\D}\p_{X_0}^\ell v_0\bigr\|_{L^p(]t\wedge
1,t[\times\R^2)}\bigl\|t^{-\bigl(1-\f1p\bigr)}\bigr\|_{L^{\bar{r}_\ell}(]t\wedge
1,t[)}\\
\leq & C\bigl(\|\D\p_{X_0}^\ell
v_0\|_{\dB^{-1-\f{\ell}k\e}_{p,p}}+\|\D\p_{X_0}^\ell
v_0\|_{\dB^{-2}_{p,p}}\bigr)\leq \|\p_{X_0}^\ell
v_0\|_{W^{1-\f\ell{k}\e,p}},
\end{split} \eeq where $\bar{r}_\ell=\f{r_\ell p}{p-r_\ell},$ and we used the fact
that $\bar{r}_\ell\bigl(\f12\bigl(1+\f\ell{k}\e\bigr)-\f1p\bigr)<1,$
$\bar{r}_\ell\bigl(1-\f1p\bigr)>1$ for any $r_\ell\in
\bigl]1,\f{2k}{k+\ell\e}\bigr[.$

Similarly, one has \beq\label{S3eq4}
\begin{split}
\bigl\|\na e^{t\D}\p_{X_0}^\ell v_0\|_{L^{s_\ell}_t(L^p)}\leq &
\bigl\|t^{\f{\ell}{2k}\e-\f1p}\na e^{t\D}\p_{X_0}^\ell
v_0\bigr\|_{L^p(]0,t\wedge
1[\times\R^2)}\bigl\|t^{-\bigl(\f\ell{2k}\e-\f1p\bigr)}\bigr\|_{L^{\bar{s}_\ell}(]0,t\wedge
1[)}\\
&+ \bigl\|t^{\f12-\f1p}\na e^{t\D}\p_{X_0}^\ell
v_0\bigr\|_{L^p(]t\wedge
1,t[\times\R^2)}\bigl\|t^{-\bigl(\f12-\f1p\bigr)}\bigr\|_{L^{\bar{s}_\ell}(]t\wedge
1,t[)}\\
\leq & C\bigl(\|\na\p_{X_0}^\ell
v_0\|_{\dB^{-\f{\ell}k\e}_{p,p}}+\|\na\p_{X_0}^\ell
v_0\|_{\dB^{-1}_{p,p}}\bigr)\leq \|\p_{X_0}^\ell
v_0\|_{W^{1-\f\ell{k}\e,p}},
\end{split} \eeq where $\bar{s}_\ell=\f{ps_\ell}{p-s_\ell}$ and we used the
fact that $\bar{s}_\ell\bigl(\f12-\f1p\bigr)>1,$
$\bar{s}_\ell\bigl(\f\ell{2k}\e-\f1p\bigr)<1$ for any $s_\ell\in
\bigl]2,\f{2k}{\ell\e}\bigr[.$

Combining \eqref{S3eq3} with \eqref{S3eq4}, we obtain \eqref{S3eq0}.
\end{proof}

\begin{prop}\label{S3prop1}
{\sl Let $r_1, s_1$ be any numbers with $ r_1\in
\bigl]1,\frac{2k}{k+\e}\bigr[,$ $s_1\in \bigl]2,\f{2k}\e\bigr[.$
Then under the assumptions of Proposition \ref{S2prop1} and
\eqref{initial:striated},  one has \beq\label{S3eq4a}
\begin{split}
J_1\eqdefa  &J_0 +\|\d_X a\|_{L^\infty_t(L^\infty)} +
\bigl\|\bigl(\d_t \d_X v,\Delta\d_X
v,\nabla\d_X\pi\bigr)\bigr\|_{L^{r_1}_t(L^p)}
\\
&\qquad +\|\nabla\d_X v\|_{L^{r_1}_t(L^\infty) \cap L^{s_1}_t(L^p)}
+\|\d_X v\|_{L^{s_1}_t(L^\infty)}
\\
&\qquad\qquad\quad + \|\d_t X\|_{L^{s_1}_t(W^{1,p})} + \|
X\|_{L^\infty_t(W^{2,p})} \leq  \cH_1(t).
\end{split} \eeq} \end{prop}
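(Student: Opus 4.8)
The plan is to realize the $\ell=1$ estimate by running, for $w\eqdefa\d_Xv$, the very same weighted maximal-regularity scheme that produced $J_0\leq\cH_0(t)$ in Proposition \ref{S2prop1}. The structural input is that $\d_X$ commutes with the convection operator: if $X$ solves \eqref{S1eq2}, then $\d_tX=\d_Xv-\d_vX$, so the time commutator $[\d_t;\d_X]f=(\d_tX)\cdot\na f$ exactly cancels the convection commutator $[v\cdot\na;\d_X]f$, giving $(\d_t+v\cdot\na)\d_Xf=\d_X\bigl((\d_t+v\cdot\na)f\bigr)$ for every $f$. Applied to the density equation this yields $(\d_t+v\cdot\na)\d_Xa=0$, whence $\|\d_Xa\|_{L^\infty_t(L^\infty)}=\|\d_{X_0}a_0\|_{L^\infty}$ is controlled by \eqref{initial:striated}; this disposes of one term of $J_1$ at once.

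Applying the same commutation to the momentum equation of \eqref{INS} gives a perturbed Stokes system for $w$:
$$\d_tw+v\cdot\na w+(1+a)\bigl(\na\d_X\pi-\D w\bigr)=G,\qquad \dive w=\na X:\na v,$$
with $G\eqdefa-(\d_Xa)(\na\pi-\D v)+(1+a)\bigl([\na;\d_X]\pi-[\D;\d_X]v\bigr)$, where $[\D;\d_X]v=(\D X)\cdot\na v+2\,\na X:\na^2v$ and $[\na;\d_X]\pi=(\na X)\cdot\na\pi$. First I would bound $G$ in $L^{r_1}_t(L^p)$: the factor $(\d_Xa)(\na\pi-\D v)$ is estimated by $\|\d_{X_0}a_0\|_{L^\infty}\|(\na\pi,\D v)\|_{L^{r_1}_t(L^p)}$ via \eqref{initial:striated} and Proposition \ref{S2prop1} (recall $r_1<2$), while the commutators are controlled by $\|X\|_{L^\infty_t(W^{2,p})}$ (which gives $\na X\in L^\infty$ by Sobolev embedding and $\D X\in L^p$) times the norms $\|\na v\|_{L^{r_1}_t(L^\infty)}$, $\|\na^2v\|_{L^{r_1}_t(L^p)}$, $\|\na\pi\|_{L^{r_1}_t(L^p)}$ already furnished by $J_0$. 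Then, writing $w(t)=e^{t\D}\d_{X_0}v_0+\int_0^te^{(t-t')\D}\bigl(a\D w-(1+a)\na\d_X\pi-v\cdot\na w+G\bigr)\,dt'$ and reproducing the argument of Proposition \ref{S2prop1} (the exponential weight $\exp(-\la\int_0^tf)$ with $\la$ large to absorb $v\cdot\na w$, and smallness of $\|a\|_{L^\infty}=\|a_0\|_{L^\infty}$ to absorb $a\D w$), together with Lemma \ref{S3lem1} for the free evolution with data $\d_{X_0}v_0\in W^{1-\e/k,p}$, I would obtain $\|(\d_tw,\D w)\|_{L^{r_1}_t(L^p)}$ and $\|\na w\|_{L^{s_1}_t(L^p)}$; the $L^{r_1}_t(L^\infty)$ bound for $\na w$ and the $L^{s_1}_t(L^\infty)$ bound for $w$ then follow by Gagliardo--Nirenberg interpolation exactly as in \eqref{S2eq15b}.

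The delicate point, flagged in Remark (2), is the pressure term $\na\d_X\pi$: here $w$ is not divergence free and $a$ is merely bounded, so $\na a$ is not a function and cannot be moved to the right-hand side. Taking the divergence of the $w$-equation gives $\D\d_X\pi=-\dive(a\,\na\d_X\pi)+\dive\bigl((1+a)\D w-v\cdot\na w+G\bigr)-\d_t(\dive w)$; after applying $\na\D^{-1}$ the first term is absorbed by smallness of $\|a\|_{L^\infty}$, and the crux is to show that $\na\D^{-1}\d_t(\dive w)=\d_t\na\D^{-1}(\na X:\na v)$ lies in $L^{r_1}_t(L^p)$. I would use $\dive X=0$ (preserved by \eqref{S1eq8}) together with $\dive v=0$ to rewrite $\d_t(\na X:\na v)=\na\d_tX:\na v+\na X:\na\d_tv$ as a \emph{pure divergence}, so that $\na\D^{-1}\d_t(\dive w)$ is bounded by $\|\na X\|_{L^\infty}\|\d_tv\|_{L^p}+\|\d_tX\|_{L^p}\|\na v\|_{L^\infty}$ through the $L^p$-boundedness of $\na\D^{-1}\dive$; inserting $\d_tX=w-v\cdot\na X$ then closes this term in already-controlled norms. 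This reduction of the awkward $\d_t\dive w$ to divergence form is the main obstacle.

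Finally I would close the $X$-estimates from the transport equation $\d_tX+v\cdot\na X=w$. The bound $\|X\|_{L^\infty_t(L^p\cap L^\infty)}$ comes from Gronwall as in \eqref{S2eq17}; differentiating twice and noting that $\na^2w$ is controlled by $\D w\in L^{r_1}_t(L^p)$ (Riesz transforms) while $[\na^2;v\cdot\na]X$ involves only $\na^2v\cdot\na X$ and $\na v\cdot\na^2X$, a further Gronwall argument gives $\|X\|_{L^\infty_t(W^{2,p})}$; and $\|\d_tX\|_{L^{s_1}_t(W^{1,p})}$ follows from $\d_tX=w-v\cdot\na X$ and the $W^{1,p}$-bounds on $w$. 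Since $\|X\|_{L^\infty_t(W^{2,p})}$ feeds back into the forcing $G$, the $w$-estimates and the $X$-estimates must be closed simultaneously by a continuation argument on a maximal interval; the favourable integrability $r_1>1$, $s_1>2$ renders all coupling terms $L^1_t$ in time with a vanishing power of $t$, so Gronwall's inequality closes the loop and yields $J_1\leq\cH_1(t)$.
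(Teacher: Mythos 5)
Your proposal is correct and follows essentially the same route as the paper's proof: the commutation $[\d_X,\d_t+v\cdot\na]=0$ for the density, the perturbed Stokes equation for $\d_Xv$ with the same forcing $F_1$ as in \eqref{S3eq8}, the Duhamel/maximal-regularity scheme with Lemma \ref{S3lem1}, and -- crucially -- the same resolution of the pressure difficulty, using $\div X=\div v=0$ to rewrite $\div\,\d_t(\d_Xv)$ as $\div(\d_tX\cdot\na v)+\div(\d_tv\cdot\na X)$, which is exactly the identity behind the paper's \eqref{S3eq9}. The only deviations are cosmetic: you absorb $v\cdot\na\d_Xv$ with the exponential weight of Proposition \ref{S2prop1}, while the paper bounds it directly by $J_0$-quantities after expanding $\na\d_Xv$; and you close the coupled $\d_Xv$--$X$ estimates by a continuation argument, whereas the paper keeps the coupling term $\bigl(\int_0^t\|\na v(t')\|_{L^\infty}^{r_1}\|\D X(t')\|_{L^p}^{r_1}\,dt'\bigr)^{1/r_1}$ explicit in \eqref{S3eq10}--\eqref{S3eq12}, substitutes into the transport estimate \eqref{S3eq5} for $\D X$, and closes with an absorption plus a single Gronwall argument.
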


\begin{proof} We decompose the proof of this proposition into the
following steps:

 \no$\bullet$ \underline{The estimate of $\p_X a$ and $\D X$}

 For  $\ell=1,\cdots, k,$ it follows from the transport
 equation of \eqref{INS} and \eqref{S1eq2} that $[\p_X; D_t]=0$ for
 $D_t\eqdefa \p_t+v\cdot\na$ and thus
\beq\label{S3eq1}  \d_t\d_X^\ell a+v\cdot\nabla\d_X^\ell a=0, \eeq
which implies
\begin{equation}\label{S3eq2} \|\d_X^\ell a\|_{L^\infty_t(L^\infty)}\leq
\|\d_{X_0}^\ell a_0\|_{L^\infty} .
\end{equation}

While we get, by applying the  operator $\Delta$ to \eqref{S1eq2},
that
$$\displaylines{
\d_t \Delta X+v\cdot\nabla\Delta X+2 \sum_{i=1}^2\p_iv \cdot
\nabla\p_iX
  +\Delta v\cdot\nabla X
=\Delta \d_X v. }$$ By using the standard $L^p$ energy estimate to
the above equation and interpolation inequality that
\beq\label{S3eq2a} \|f\|_{L^\infty}\leq\|f\|_{\dot
B^{\f2p}_{p,1}}\leq C\|f\|_{L^p}^{1-\f2p}\|\na
f\|_{L^p}^{\f2p}\quad\mbox{for}\ \ \forall\  p\in ]2,\infty[, \eeq
we obtain \beno
\begin{split}
\f{d}{dt}\|\D X(t)\|_{L^p}\leq & 2\|\na
v(t)\|_{L^\infty}\|\na^2X(t)\|_{L^p}+\|\D v(t)\|_{L^p}\|\na
X(t)\|_{L^\infty}+\|\D\p_Xv(t)\|_{L^p}\\
\leq &C\Bigl(\|\na v(t)\|_{L^p}^{1-\f2p}\|\D v(t)\|_{L^p}^{\f2p}\|\D
X(t)\|_{L^p}\\
&\quad+\|\D v(t)\|_{L^p}\|\na X(t)\|_{L^p}^{1-\f2p}\|\D
X(t)\|_{L^p}^{\f2p}+\|\D \p_Xv(t)\|_{L^p}\Bigr)\\
\leq &C\Bigl(\bigl(\|\na v(t)\|_{L^p}+\|\D v(t)\|_{L^p}\bigr)\|\D
X(t)\|_{L^p}\\
&\qquad\qquad\qquad\qquad+\|\D v(t)\|_{L^p}\|\na X(t)\|_{L^p}+\|\D
\p_Xv(t)\|_{L^p}\Bigr).
\end{split}
\eeno Applying Gronwall's inequality and using \eqref{S4eq2} leads
to \beq\label{S3eq5}\begin{split} \|\D X(t)\|_{L^p}\leq &C\Bigl(\|\D
X_0\|_{L^p}+ \|\D v\|_{L^1_t(L^p)}\|\na X\|_{L^\infty_t(L^p)}+\|\D
\p_Xv\|_{L^1_t(L^p)}\Bigr)\\
&\quad\times\exp\Bigl(C\bigl(\|\na
v\|_{L^1_t(L^p)}+\|\D v\|_{L^1_t(L^p)}\bigr)\Bigr)\\
\leq & C\cH_1(t)\bigl(1+\|\D \p_Xv\|_{L^1_t(L^p)}\bigr).
\end{split}
\eeq

 \no$\bullet$ \underline{The estimates of   $\Delta\d_X v$ and $\nabla\d_X\pi$}

We first get, by applying $\d_X$ to the momentum equation of
\eqref{inNS}, that \beq \label{S3eq8}
\begin{split}
&\d_t\d_X v + v\cdot\nabla \d_X v -  (1+a)\bigl( \Delta \d_X v -
\nabla \d_X \pi\bigr)
\\
&\qquad
 =\d_X a\bigl(\Delta
v-\nabla\pi\bigr) -(1+a)\Bigl([\Delta; \d_X]v -\sum_{i=1}^2 \nabla
X^i\p_i\pi\Bigr) \eqdefa F_1(v,\pi).
\end{split} \eeq
Taking space divergence to \eqref{S3eq8} gives \beno \div\bigl((1+a)
\nabla\d_X\pi\bigr) =\div\bigl(-\d_t(\d_X v) -v\cdot\nabla \d_X v
+\Delta\d_X v +a\Delta \d_X v + F_1\bigr). \eeno Due to $\dive
v=0=\dive X,$ and \beno [\D; \p_X]f=\D X\cdot\na
f+2\sum_{i=1}^2\p_iX\cdot\na \p_i f, \eeno
 one has
\begin{align*}
-\div\d_t(\d_X v) &=-\div \bigl( \d_t X\cdot\nabla  v +
X\cdot\nabla \d_t v \bigr)
 \\
 &=-\div \bigl(\d_t X\cdot\nabla v\bigr)
 -\div\bigl(\d_t v\cdot\nabla X\bigr),
\end{align*}
and
\begin{align*}
\div  \Delta \d_X v &=\div\bigl([\Delta;\d_X] v\bigr) +\div(
\d_X\Delta v)
\\
&=\div\bigl(\Delta X\cdot\nabla v\bigr)
+2\sum_{i=1}^2\div(\p_iX\cdot\nabla\p_i v)
 +\div\bigl(\Delta v\cdot\nabla X\bigr).
\end{align*}
Therefore, one achieves \beq \label{S3eq9}
\begin{split}
\div\bigl((1+a)\nabla\d_X\pi\bigr) =&-\div(\d_t  X\cdot\nabla v )
-\div\bigl(v\cdot\nabla\d_X v\bigr) +\div\Bigl( \d_Xa(\Delta
v-\nabla\pi)\\
& -(\d_t
v-\Delta v)\cdot\nabla X-2a\nabla X:\nabla^2 v\\
&  +(1+a)\nabla X\cdot\nabla\pi
  \Bigr )-\div\bigl(a\Delta X\cdot\nabla v \bigr) +\div(a\Delta\d_X
  v),
\end{split} \eeq
from which, and $\|a\|_{L^\infty}$ is sufficiently small, we infer
\begin{align*}
\|\nabla\d_X \pi(t)\|_{  L^{p}} &\lesssim \|\d_t X\cdot\nabla
v(t)\|_{L^p} +\|v\cdot\nabla\d_X v(t)\|_{  L^{p}}
\\
&\quad +\bigl\|\bigl( \d_t v, \Delta v, \nabla\pi\bigr)(t)
\bigr\|_{L^p} \bigl\|\bigl(\nabla X, \d_Xa \bigr)(t)
\bigr\|_{L^\infty} \,
  \\
  &\quad+\, \|\nabla v(t)\|_{L^\infty} \|\Delta
X(t)\|_{L^p}
 +\|a_0\|_{L^\infty} \|\Delta\d_X v(t)\|_{ L^{p}}  .
\end{align*}
For any $r_1\in \bigl]1, \f{2k}{k+\e}\bigr[,$  let us take $\s_1,
\s_2\in \bigl]\f{2p}{p+2},p\bigr[,$ $r_{01}, r_{02}, r_{03} \in
]1,\infty],$ $s_{01}, s_{02} \in ]2,\infty[,$ and $r_{0} \in ]1,2[$
so that
$$
\frac{1}{r_1}= \frac {1}{r_{01}}+\f1{\s_1}
+\frac{1}{s_{01}}=\f1{s_{02}}+\f1{r_{03}}+\f1{\s_2}=\f1{s_{02}}
+\frac {1}{r_{02}}+\frac{1}{r_{0}}.
$$
Then by virtue of \eqref{S4eq2} and the equation for $X$
\eqref{S1eq2}, we write
\begin{align*}
\|\d_t X&\cdot\nabla v\|_{L^{r_1}_t(L^p)} +\|v\cdot\nabla\d_X
v\|_{L^{r_1}_t(L^p)}
\\
\lesssim &
 \|\d_t X\otimes\nabla v\|_{L^{r_1}_t(L^p)}
+\|v\otimes\nabla X\otimes\nabla v\|_{ L^{r_1}_t(L^p)}
 +\|  v\otimes X\otimes \nabla^2 v\|_{ L^{r_1}_t(L^p)}
 \\
 \lesssim &
 \|X\|_{L^{r_{01}}_t(L^\infty)}
 \|\nabla v\|_{L^{\s_{1}}_t(L^\infty)}
 \|\nabla v\|_{L^{s_{01}}_t(L^p)}
 \\
 &
 +\|v\|_{L^{s_{02}}_t(L^\infty)}
 \Bigl( \|\nabla X\|_{L^{r_{03}}_t(L^p)}
 \|\nabla v\|_{L^{\s_{2}}_t(L^\infty)}
 +
 \|X\|_{L^{r_{02}}_t(L^\infty)}
 \|\nabla^2 v\|_{L^{r_{0}}_t(L^p)}
 \Bigr)
 \\
  \lesssim & \cH_1(t).
\end{align*}
Therefore, taking into account \eqref{S3eq2},  we obtain
\beq\label{S3eq10}
\begin{split}
\|\nabla\d_X \pi\|_{L^{r_1}_t (L^{p})} &\leq
\cH_1(t)+C\biggl(\bigl\|\bigl( \d_t v, \Delta v, \nabla\pi\bigr)
\bigr\|_{L^{r_1}_t(L^p)}
 \bigl\|\bigl( \nabla X, \d_{X_0}a_0
\bigr)\bigr\|_{L^\infty_t (L^\infty)} \\
& \quad+\Bigl(\int_0^t\|\na v(t')\|_{L^\infty}^{r_1}\|\D
X(t')\|_{L^p}^{r_1}\,dt'\Bigr)^{\f1{r_1}}
+\|a_0\|_{L^\infty}\|\Delta\d_X v\|_{L^{r_1}_t (L^{p})}\biggr)
\\
&\leq C\biggl(\cH_1(t)\bigl(1+\|\na
X\|_{L^\infty_t(L^\infty)}\bigr)+\|a_0\|_{L^\infty}\|\Delta\d_X
v\|_{L^{r_1}_t (L^{p})}\\
&\qquad\qquad\qquad\qquad\qquad+\Bigl(\int_0^t\|\na
v(t')\|_{L^\infty}^{r_1}\|\D
X(t')\|_{L^p}^{r_1}\,dt'\Bigr)^{\f1{r_1}} \biggr).
\end{split} \eeq
Similarly it follows that \beq\label{S3eq11}
\begin{split}
\|F_1\|_{L^{r_1}_t(L^p)}\leq &\|\p_X
a\|_{L^\infty_t(L^\infty)}\bigl(\|\D
v\|_{L^{r_1}_t(L^p)}+\|\na\pi\|_{L^{r_1}_t(L^p)}\bigr)\\
&+\|\na
X\|_{L^\infty_t(L^\infty)}\|\na^2v\|_{L^{r_1}_t(L^p)}+\Bigl(\int_0^t\|\na
v(t')\|_{L^\infty}^{r_1}\|\D
X(t')\|_{L^p}^{r_1}\,dt'\Bigr)^{\f1{r_1}}\\
\leq &\cH_1(t)\bigl(1+\|\na
X\|_{L^\infty_t(L^\infty)}\bigr)+\Bigl(\int_0^t\|\na
v(t')\|_{L^\infty}^{r_1}\|\D
X(t')\|_{L^p}^{r_1}\,dt'\Bigr)^{\f1{r_1}}.
\end{split}
\eeq

On the other hand, in view of \eqref{S3eq8}, we write \beq
\label{S3eq15}
\p_Xv(t)=e^{t\D}\p_{X_0}v_0+\int_0^te^{(t-t')\D}\bigl(-v\cdot\na\p_X
v+a\D\p_Xv-(1+a)\na\p_X\pi+F_1\bigr)(t')\,dt', \eeq from which,
\eqref{S3eq0} and maximal regularity estimate for heat semi-group,
we infer \beno \begin{split}
\|\p_t\p_Xv&\|_{L^{r_1}_t(L^p)}+\|\D\p_Xv\|_{L^{r_1}_t(L^p)}\\
\leq
&\|\p_{X_0}v_0\|_{W^{1-\f\e{k},p}}+C\Bigl(\|F_1\|_{L^{r_1}_t(L^p)}+\|v\cdot\na\p_X
v\|_{L^{r_1}_t(L^p)}\\
&+\|a\|_{L^\infty_t(L^\infty)}\|\D\p_Xv\|_{L^{r_1}_t(L^p)}+\bigl(1+\|a\|_{L^\infty_t(L^\infty)}\bigr)
\|\na\p_X\pi\|_{L^{r_1}_t(L^p)}\Bigr).
\end{split}
\eeno Hence by virtue of \eqref{S3eq10} and \eqref{S3eq11}, we get,
by taking $\|a_0\|_{L^\infty}$ to be sufficiently small, that \beq
\label{S3eq12}\begin{split} \|\p_t\p_Xv\|_{L^{r_1}_t(L^p)}+
 \|\D\p_Xv\|_{L^{r_1}_t(L^p)}\leq &\cH_1(t)\bigl(1+\|\na
X\|_{L^\infty_t(L^\infty)}\bigr)\\
&+C\Bigl(\int_0^t\|\na v(t')\|_{L^\infty}^{r_1}\|\D
X(t')\|_{L^p}^{r_1}\,dt'\Bigr)^{\f1{r_1}}. \end{split} \eeq Resuming
the above estimate into \eqref{S3eq5} and using \eqref{S3eq2a} gives
rise to \beno \begin{split} \|\D X\|_{L^\infty_t(L^p)}\leq &
\cH_1(t)\bigl(1+\|\na X\|_{L^\infty_t(L^p)}^{1-\f2p}\|\D
X\|_{L^\infty_t(L^p)}^{\f2p}\bigr)\\
&+Ct^{1-\f1{r_1}}\Bigl(\int_0^t\|\na v(t')\|_{L^\infty}^{r_1}\|\D
X(t')\|_{L^p}^{r_1}\,dt'\Bigr)^{\f1{r_1}}\\
\leq & \cH_1(t)+\f12\|\D
X\|_{L^\infty_t(L^p)}+Ct^{1-\f1{r_1}}\Bigl(\int_0^t\|\na
v(t')\|_{L^\infty}^{r_1}\|\D
X(t')\|_{L^p}^{r_1}\,dt'\Bigr)^{\f1{r_1}}, \end{split} \eeno which
gives \beno \|\D X\|_{L^\infty_t(L^p)}^{r_1}\leq
\cH_1(t)^{r_1}+Ct^{r_1-1}\int_0^t\|\na v(t')\|_{L^\infty}^{r_1}\|\D
X(t')\|_{L^p}^{r_1}\,dt'. \eeno Applying Gronwall's inequality and
using \eqref{S4eq2} gives \beq \label{S3eq13} \|\D
X\|_{L^\infty_t(L^p)}^{r_1}\leq
\cH(t)\exp\Bigl(Ct^{r_1-1}\int_0^t\|\na
v(t')\|_{L^\infty}^{r_1}\,dt'\Bigr)\leq \cH_1(t). \eeq  This
together with \eqref{S3eq12} and \eqref{S3eq10} ensures that
\beq\label{S3eq14} \|\p_t\p_X v\|_{L^{r_1}_t(L^p)}+\|\D
\p_Xv\|_{L^{r_1}_t(L^p)}+\|\na\p_X\pi\|_{L^{r_1}_t(L^p)}\leq
\cH_1(t). \eeq

 \no$\bullet$ \underline{The estimate of $\nabla\d_X v$ and $\p_tX$}

It is easy to observe that  for any $s_1\in
\big]2,\frac{2k}{\e}\bigr[$,  there holds $r_1=\f{2s_1}{2+s_1}\in
\bigl]1,\frac{2k}{k+\e}\bigr[.$ Hence in view of \eqref{S2eq14},
\eqref{S3eq0} and \eqref{S3eq15}, we get \beno
 \begin{split} \|\na\p_Xv\|_{L^{s_1}_t(L^p)}\leq
&\|\p_{X_0}v_0\|_{W^{1-\f\e{k},p}}+C\Bigl(\|F_1\|_{L^{r_1}_t(L^p)}+\|v\cdot\na\p_X
v\|_{L^{r_1}_t(L^p)}\\
&+\|a\|_{L^\infty_t(L^\infty)}\|\D\p_Xv\|_{L^{r_1}_t(L^p)}+\bigl(1+\|a\|_{L^\infty_t(L^\infty)}\bigr)
\|\na\p_X\pi\|_{L^{r_1}_t(L^p)}\Bigr).
\end{split}
\eeno which together with \eqref{S3eq14} implies that \beq
\label{S3eq16} \begin{split}  &\|\na\p_Xv\|_{L^{s_1}_t(L^p)}\leq
\cH_1(t),\andf\\
&\|\na\d_X v\|_{L^{r_1}_t(L^\infty)} \leq
C\|\na\p_Xv\|_{L^{r_1}_t(L^p)}^{1-\f2p}\|\D\p_Xv\|_{L^{r_1}_t(L^p)}^{\f2p}\leq
\cH_1(t). \end{split} \eeq Whereas since $\nabla v\in
L^{\f{2r}{2-r}}_t(L^p)$ for all $r\in ]1,2[$, one has $\p_Xv\in
L^{\f{2r}{2-r}}_t(L^p)$ and \beq\label{S3eq17} \|\d_X
v\|_{L^{s_1}_t(L^\infty)} \leq
C\|\p_Xv\|_{L^{s_1}_t(L^p)}^{1-\f2p}\|\na\p_Xv\|_{L^{s_1}_t(L^p)}^{\f2p}\leq
\cH_1(t). \eeq  Moreover,  it follows from \eqref{S1eq2} that
 \beno
\|\p_tX\|_{L^{s_1}_t(L^p)}\leq \|v\|_{L^{s_1}_t(L^\infty)}\|\na
X\|_{L^\infty_t(L^p)}+\|\p_Xv\|_{L^{s_1}_t(L^p)}\leq \cH_1(t), \eeno
and
\begin{align*}
\|\nabla\d_t X\|_{L^{s_1}_t(L^p)} &\lesssim \|\nabla v\otimes\nabla
X\|_{L^{s_1}_t(L^p)} +\|v\otimes\nabla^2 X\|_{L^{s_1}_t(L^p)}
+\|\nabla\d_X v\|_{L^{s_1}_t(L^p)}
\\
&\lesssim \|\nabla v\|_{L^{s_1}_t(L^p)} \|\nabla
X\|_{L^\infty_t(L^\infty)} + \|v\|_{L^{s_1}_t(L^\infty)} \|\nabla^2
X\|_{L^\infty_t(L^p)} +\|\nabla\d_X v\|_{L^{s_1}_t(L^p)}
\\
&\leq \cH_1(t).
\end{align*}
This yields \beno \|\d_t X\|_{L^{s_1}_t(W^{1,p})} \leq \cH_1(t).
\eeno This together with (\ref{S3eq13}-\ref{S3eq17}) leads to
\eqref{S3eq4a}.
\end{proof}

\setcounter{equation}{0}
\section{The proof of Theorem \ref{thm1}} \label{S6}

Recalling the definitions of   $J_0, J_1$ given by  \eqref{S4eq2}
and \eqref{S3eq4a} respectively, in general for $\ell\geq 2$ and any
$ r_\ell\in \bigl]1, \frac{2k}{k+\ell\,\e}\bigr[, \, s_\ell\in
\bigl] 2, \frac{2k}{\ell\,\e} \bigr[, $  let us define  $J_\ell(t)$
as follows \beq\label{S5eq1}
\begin{split}
&J_{\ell}(t)\eqdefa J_{\ell-1}(t)
 + \|(\d_t \d_X^{\ell } v,\Delta\d_X^{\ell} v, \nabla\d_X^{\ell
}\pi)\|_{L^{r_{\ell }}_t(L^p)} +\|\d_X^{\ell
}a\|_{L^\infty_t(L^\infty)}\\
&+\|\nabla\d_X^{\ell } v\|_{L^{r_{\ell }}_t(L^\infty) \cap
L^{s_{\ell }}_t(L^p)}+\|\d_X^{\ell } v\|_{L^{s_{\ell }}_t(L^\infty)}
+ \|\d_t \d_X^{\ell-1}X\|_{L^{s_{\ell }}_t(W^{1,p})} +
\|\d_X^{\ell-1} X\|_{L^\infty_t(W^{2,p})}.
\end{split} \eeq
Then Theorem \ref{thm1} essentially follows from the following
proposition:

\begin{prop}\label{S5prop1}
{\sl Under the assumptions of Proposition \ref{S3prop1}, one has
\beq \label{S5eq2} J_\ell(t)\leq \cH_\ell(t), \quad \forall\ \ell
=2,\cdots, k. \eeq }
\end{prop}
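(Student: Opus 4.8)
The plan is to establish \eqref{S5eq2} by induction on $\ell$, the cases $\ell=0$ and $\ell=1$ being already settled by Proposition \ref{S2prop1} and Proposition \ref{S3prop1}. For the inductive step I would assume $J_{\ell-1}(t)\leq\cH_{\ell-1}(t)$ and reproduce, one striated level higher, the three-step scheme used to prove Proposition \ref{S3prop1}. The cheapest ingredient is the density: since $[\d_X;D_t]=0$ with $D_t\eqdef\p_t+v\cdot\na$, the field $\d_X^\ell a$ is transported by $v$, so that $\|\d_X^\ell a\|_{L^\infty_t(L^\infty)}\leq\|\d_{X_0}^\ell a_0\|_{L^\infty}$, which is finite by \eqref{initial:striated}.

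Next I would derive the evolution equation for $\d_X^\ell v$. Applying $\d_X^\ell$ to the momentum equation of \eqref{INS}, written as $D_tv=(1+a)(\D v-\na\pi)$, and using $[\d_X;D_t]=0$ together with the Leibniz rule for the first-order operator $\d_X$, gives
\beq\label{plan:1}
\p_t\d_X^\ell v+v\cdot\na\d_X^\ell v-(1+a)\bigl(\D\d_X^\ell v-\na\d_X^\ell\pi\bigr)=F_\ell,
\eeq
where
\beno
F_\ell\eqdef -(1+a)[\D;\d_X^\ell]v+(1+a)[\na;\d_X^\ell]\pi+\sum_{j=1}^\ell\binom{\ell}{j}\d_X^ja\,\bigl(\d_X^{\ell-j}\D v-\d_X^{\ell-j}\na\pi\bigr).
\eeno
This is the exact analogue of \eqref{S3eq8}. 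The decisive point is that, inside $[\D;\d_X^\ell]v$, the only contribution involving the top-order quantity $\D\d_X^{\ell-1}X$ is of the form $\D\d_X^{\ell-1}X\cdot\na v$; every other factor in $F_\ell$ is a velocity/pressure/density object at striated level $\leq\ell-1$ or is the field $\d_X^{\ell-1}X$ carrying at most one derivative.

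I would then close a coupled estimate for the two remaining top-order quantities $\D\d_X^\ell v$ and $\D\d_X^{\ell-1}X$, exactly as in \eqref{S3eq5}--\eqref{S3eq14}. Setting $Y\eqdef\d_X^{\ell-1}X$, the identity $D_tY=\d_X^\ell v$ yields, after applying $\D$ and running an $L^p$ energy estimate with the interpolation \eqref{S3eq2a}, a bound of the shape $\|\D Y\|_{L^\infty_t(L^p)}\leq\cH_\ell(t)\bigl(1+\|\D\d_X^\ell v\|_{L^1_t(L^p)}\bigr)$, the analogue of \eqref{S3eq5}; this also furnishes $\|\na\d_X^{\ell-1}X\|_{L^\infty_t(L^\infty)}$ through $W^{2,p}\hookrightarrow W^{1,\infty}$. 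In the other direction, taking the space divergence of \eqref{plan:1} produces an elliptic equation for $\d_X^\ell\pi$; using the smallness of $\|a_0\|_{L^\infty}$ I would bound $\|\na\d_X^\ell\pi\|_{L^{r_\ell}_t(L^p)}$ and $\|F_\ell\|_{L^{r_\ell}_t(L^p)}$ by $\cH_\ell(t)$ plus the coupling term $\bigl(\int_0^t\|\na v\|_{L^\infty}^{r_\ell}\|\D Y\|_{L^p}^{r_\ell}\,dt'\bigr)^{1/r_\ell}$ and $\|a_0\|_{L^\infty}\|\D\d_X^\ell v\|_{L^{r_\ell}_t(L^p)}$, as in \eqref{S3eq10}. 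Feeding this into the Duhamel formula for \eqref{plan:1} and combining maximal regularity for the heat semigroup with Lemma \ref{S3lem1} applied to the datum $\d_{X_0}^\ell v_0\in W^{1-\frac{\ell}{k}\e,p}$ gives the analogue of \eqref{S3eq12}; substituting back and invoking Gronwall's inequality as in \eqref{S3eq13} decouples $\D Y$ and $\D\d_X^\ell v$, so that both, and then $\na\d_X^\ell\pi$ and $\p_t\d_X^\ell v$, are bounded by $\cH_\ell(t)$.

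Finally, the lower time-integrability norms follow as in \eqref{S3eq16}--\eqref{S3eq17}: Lemma \ref{S2lem2} applied to the Duhamel formula for \eqref{plan:1} gives $\|\na\d_X^\ell v\|_{L^{s_\ell}_t(L^p)}\leq\cH_\ell(t)$ upon choosing $r_\ell=\frac{2s_\ell}{2+s_\ell}$, interpolation yields $\|\na\d_X^\ell v\|_{L^{r_\ell}_t(L^\infty)}$ and $\|\d_X^\ell v\|_{L^{s_\ell}_t(L^\infty)}$, and the equation $D_tY=\d_X^\ell v$ gives $\|\p_t\d_X^{\ell-1}X\|_{L^{s_\ell}_t(W^{1,p})}\leq\cH_\ell(t)$; summing these against \eqref{S5eq1} yields \eqref{S5eq2}. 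The main obstacle is not the scheme, which is structurally identical to the case $\ell=1$, but the combinatorial bookkeeping of $F_\ell$: one must expand the high-order commutators $[\D;\d_X^\ell]$ and $[\na;\d_X^\ell]$, split each resulting product by H\"older's inequality in the mixed space--time norms with an exponent choice generalizing the relation displayed just before \eqref{S3eq10}, and verify that apart from the single coupling factor $\D\d_X^{\ell-1}X\cdot\na v$ every term is absorbed into $\cH_\ell(t)$ by the induction hypothesis $J_{\ell-1}\leq\cH_{\ell-1}$ together with the freshly established bounds on $\D\d_X^{\ell-1}X$ and $\na\d_X^{\ell-1}X$.
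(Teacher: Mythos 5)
Your proposal is correct and follows essentially the same route as the paper: induction on $\ell$ with the transported density, the equation \eqref{S5eq9} for $\d_X^\ell v$ whose source $F_\ell$ has $\d_X^{\ell-1}\D X\cdot\na v$ as its only top-order coupling (the paper's Lemma \ref{S5lem2}), an elliptic estimate for $\na\d_X^\ell\pi$ closed by the smallness of $\|a_0\|_{L^\infty}$ (Lemma \ref{S5lem3}), and the coupled Gronwall argument between $\D\d_X^\ell v$ and $\D\d_X^{\ell-1}X$ of \eqref{S5eq16}--\eqref{S5eq19}. The bookkeeping you defer is exactly the content of the paper's Lemmas \ref{S5lem1}--\ref{S5lem3}; the only point worth flagging is that for the pressure the paper does not take $\dive$ of \eqref{S5eq9} directly but instead propagates the divergence-form equation $\dive\bigl((1+a)\na\p_X^\ell\pi\bigr)=\dive G_\ell$ inductively via \eqref{S5eq13a}, which is how the terms $\dive\,\d_t\d_X^\ell v$ and $\dive\,\D\d_X^\ell v$ are kept as divergences of controlled $L^p$ quantities rather than of the top-order unknowns themselves.
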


 Its
proof will depend on the following three lemmas:

\begin{lem}\label{S5lem1}
{\sl For any $\ell \in\{1,\cdots,k\, \},$ $r_\ell \in \bigl]1,
\frac{2k}{k+\ell\,\e}\bigr[,$ and $ s_\ell\in
\bigl]2,\frac{2k}{\ell\,\e}\bigr[,$  we have \beq
\label{S5eq3}\begin{split} &\bigl\|\d_X^i\nabla\d_X^{\ell-i}
X-\nabla\d_X^\ell X\bigr\|_{L^\infty_t(L^\infty)}+
\bigl\|\d_X^i\nabla \d_X^{\ell-i} X-\nabla \d_X^\ell
X\|_{L^\infty_t(W^{1,p})}\\
&+ \bigl\|\d_X^i\nabla^2\d_X^{\ell-i} X-\nabla^2 \d_X^\ell
X\bigr\|_{L^\infty_t(L^p)}+\bigl\|\d_X^i\d_t\d_X^{\ell-i}
X-\d_t\d_X^\ell X\bigr\|_{L^{s_\ell}_t(W^{1,p})} \lesssim
J_{\ell}^{\ell+1}.
\end{split} \eeq
When $i\neq \ell$, one has \beq \label{S5eq4}\begin{split}
&\bigl\|\d_X^i \nabla \d_X^{\ell-i} v-\nabla\d_X^\ell v\bigr\|
_{L^{r_{\ell-1}}_t(L^\infty)\cap
L^{s_{\ell-1}}_t(L^p)}+\bigl\|\d_X^i \nabla^2 \d_X^{\ell-i}
v-\nabla^2\d_X^\ell v\bigr\| _{L^{r_{\ell-1}}_t(L^p)}
\\
&+\bigl\|\d_X^i \nabla \d_X^{\ell-i} \pi-\nabla\d_X^\ell
\pi\bigr\|_{L^{r_{\ell-1}}_t(L^p) }+\bigl\|\d_X^i \d_t\d_X^{\ell-i}
v-\d_t\d_X^\ell v\| _{L^{r_{\ell-1}}_t(L^p) } \lesssim
J_{\ell-1}^{\ell+1} ,
\end{split} \eeq
and in the case when $i=\ell$, there holds \beq
\label{S5eq5}\begin{split} &\bigl\|\d_X^\ell\nabla v-\nabla\d_X^\ell
v +\d_X^{\ell-1}\nabla X\cdot\nabla v
\bigr\|_{L^{r_{\ell-1}}_t(L^\infty)\cap L^{s_{\ell-1}}_t(L^p)}
\lesssim J_{\ell-1}^{\ell+1}
\\
&\bigl\|\d_X^\ell \Delta   v - \Delta\d_X^\ell v +\d_X^{\ell-1}
\Delta X\cdot\nabla v +2\d_X^{\ell-1}\nabla X:\nabla^2
v\bigr\|_{L^{r_{\ell-1}}_t(L^p)} \lesssim J_{\ell-1}^{\ell+1},
\\
&\bigl\|\d_X^\ell \nabla   \pi-\nabla\d_X^\ell \pi
+\d_X^{\ell-1}\nabla X\cdot\nabla\pi\bigr\| _{L^{r_{\ell-1}}_t(L^p)
} \lesssim J_{\ell-1}^{\ell+1},
\\
&\bigl\|\d_X^\ell \d_t v-\d_t\d_X^\ell v +\d_X^{\ell-1}\d_t
X\cdot\nabla v\bigr\| _{L^{r_{\ell-1}}_t(L^p) } \lesssim
J_{\ell-1}^{\ell+1}.
\end{split} \eeq}
\end{lem}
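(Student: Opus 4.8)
The plan is to reduce each difference in \eqref{S5eq3}--\eqref{S5eq5} to a finite sum of explicit commutator terms, and then to bound every such term by a monomial in the norms defining $J_\ell$ (respectively $J_{\ell-1}$). The whole computation rests on three elementary commutator identities. Since $\partial_X=X\cdot\nabla$ is a first order derivation, it obeys the exact Leibniz rule $\partial_X(fg)=(\partial_X f)\,g+f\,\partial_X g$, and one checks directly that
\[
[\partial_X,\partial_i]g=-(\partial_i X)\cdot\nabla g,\qquad
[\partial_X,\Delta]g=-\Delta X\cdot\nabla g-2\,\nabla X:\nabla^2 g,
\]
while the equation \eqref{S1eq2} for $X$ gives $[\partial_X,\partial_t]g=-(\partial_t X)\cdot\nabla g.$ These are the only obstructions to commuting $\partial_X$ with $\nabla,\Delta,\partial_t$, and each one splits off precisely one factor that is a derivative of $X$.

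First I would treat the gradient lines. Setting $C_i\eqdefa\partial_X^i\nabla\partial_X^{\ell-i}g-\nabla\partial_X^\ell g$, the first identity above yields the recursion $C_{i+1}=C_i-\partial_X^i\bigl(\nabla X\cdot\nabla\partial_X^{\ell-i-1}g\bigr)$ with $C_0=0,$ hence
\[
C_i=-\sum_{m=0}^{i-1}\partial_X^m\bigl(\nabla X\cdot\nabla\partial_X^{\ell-m-1}g\bigr).
\]
Expanding each $\partial_X^m$ by Leibniz and then reducing the remaining mixed objects $\partial_X^b\nabla\partial_X^c g$ by the same identities, $C_i$ becomes a finite sum of monomials, each a product of several factors of the form $\partial_X^{a}\nabla X$ times exactly one factor $\nabla\partial_X^{c}g$. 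Two bookkeeping facts drive the estimate: the total striated order is conserved and equal to $\ell$, so every factor has order at most $\ell$; and each commutation splits off at most one $\nabla X$, so each monomial carries at most $\ell+1$ factors, which is the origin of the exponent $\ell+1$. When $i\neq\ell$ the surviving $g$-factor is always $\nabla\partial_X^{c}g$ with $c\le\ell-1,$ so only $J_{\ell-1}$ data occur and \eqref{S5eq4} follows; when $i=\ell$ the single term $-(\partial_X^{\ell-1}\nabla X)\cdot\nabla g,$ coming from $m=\ell-1$ with all of $\partial_X^{\ell-1}$ falling on $\nabla X,$ must be kept on the left-hand side of \eqref{S5eq5}, after which all remaining monomials again involve only $J_{\ell-1}.$ The second-derivative, pressure and time-derivative lines of \eqref{S5eq5} are produced identically, starting instead from $[\partial_X,\Delta]$ and $[\partial_X,\partial_t]$ and extracting the corresponding leading terms $\partial_X^{\ell-1}(\Delta X\cdot\nabla v+2\nabla X:\nabla^2 v),$ $\partial_X^{\ell-1}\nabla X\cdot\nabla\pi$ and $\partial_X^{\ell-1}\partial_t X\cdot\nabla v.$ The estimates \eqref{S5eq3} for $X$ itself are the same computation with $g=X$; here the subtraction of $\nabla\partial_X^\ell X$ keeps every surviving $g$-factor at order $\le\ell-1,$ so that it lies in $L^\infty_t(W^{1,p})\hookrightarrow L^\infty_t(L^\infty)$ and is controlled within $J_\ell$ through $\|\partial_X^{\ell-1}X\|_{L^\infty_t(W^{2,p})}$; this is why no leading term need be removed and why the bound is $J_\ell^{\ell+1}$ rather than $J_{\ell-1}^{\ell+1}.$

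It then remains to estimate each monomial. For a product I would place all factors but one in $L^\infty$ in the space variable, using $p>2$ and $W^{1,p}\hookrightarrow L^\infty$ together with the two-dimensional interpolation inequalities \eqref{S2eq6} and \eqref{S3eq2a}, and keep the last factor in $L^p,$ so that spatial H\"older closes in $L^p$ (or in $L^\infty$ for the lines measured there). In time I would split the exponents by H\"older, matching $\sum_j 1/q_j$ to the prescribed target exponent $r_\ell,$ $s_\ell,$ $r_{\ell-1}$ or $s_{\ell-1}.$ Since each factor $\partial_X^{a}\nabla X,$ $\nabla\partial_X^{c}v,$ $\partial_t\partial_X^{c}X,$ and so on, is bounded in one of the space-time norms appearing in $J,$ and since we may assume $J\ge1,$ every monomial is dominated by $J^{\ell+1}.$

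The hard part will be precisely this last exponent bookkeeping: one must verify that for each of the finitely many monomials in the expansion, which have at most $\ell+1$ factors, there is an admissible splitting of the time integrability compatible with the ranges $r_\ell\in\bigl]1,\tfrac{2k}{k+\ell\e}\bigr[$ and $s_\ell\in\bigl]2,\tfrac{2k}{\ell\e}\bigr[$ (and the analogous ranges at level $\ell-1$). This is exactly where the specific form of these intervals is used; once the commutator identities are in place, the algebraic expansion and the spatial H\"older and Sobolev estimates are routine.
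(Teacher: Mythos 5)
Your proposal is correct and takes essentially the same route as the paper's proof: the same commutator identities for $[\p_X,\na]$, $[\p_X,\D]$, $[\p_X,\p_t]$, the same telescoping sum $\p_X^i\na\p_X^{\ell-i}g-\na\p_X^\ell g=-\sum_{m}\p_X^m\bigl(\na X\cdot\na\p_X^{\ell-m-1}g\bigr)$ expanded by Leibniz, the same extraction of the top-order terms $\p_X^{\ell-1}\na X\cdot\na v$, $\p_X^{\ell-1}\D X\cdot\na v+2\p_X^{\ell-1}\na X:\na^2v$, $\p_X^{\ell-1}\na X\cdot\na\pi$, $\p_X^{\ell-1}\p_tX\cdot\na v$ when $i=\ell$, and the same Sobolev/H\"older estimates with the time-exponent splitting $\f1{r_j}=\f1{s_{n+1}}+\f1{s_{j-n-1}}$ that the intervals for $r_\ell,s_\ell$ are designed to permit. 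The only difference is organizational and inessential: the paper closes the estimates by induction on $\ell$, invoking Remark \ref{S5rmk1} to control the mixed objects $\p_X^a\na\p_X^bX$, $\p_X^a\na\p_X^bv$ at lower order, whereas you iterate the commutator reduction until only pure factors $\na\p_X^cX$ and $\na\p_X^cg$ remain.
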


\begin{rmk}\label{S5rmk1}
It follows  from \eqref{S5eq3}, \eqref{S5eq4} and \eqref{S5eq5} that
\beq\label{S5eq5a}
\begin{split}
& \|\d_X^i\nabla\d_X^{\ell-i} X \|_{L^\infty_t(L^\infty)}+
\|\d_X^i\nabla \d_X^{\ell-i} X \|_{L^\infty_t(W^{1,p})},
\\
&\qquad\qquad+\|\d_X^i\nabla^2\d_X^{\ell-i} X \|_{L^\infty_t(L^p)}+
\|\d_X^i\d_t\d_X^{\ell-i} X \|_{L^{s_{\ell+1}}_t(W^{1,p})} \lesssim
J_{\ell+1}^{\ell+1},
\end{split} \eeq
and \beq\label{S5eq5b}
\begin{split}
&\|\d_X^i \nabla \d_X^{\ell-i} v \| _{L^{r_{\ell }}_t(L^\infty)\cap
L^{s_{\ell}}_t(L^p)} +\|\d_X^i \nabla^2 \d_X^{\ell-i} v
\|_{L^{r_{\ell }}_t(L^p)}
\\
&\qquad\qquad\qquad+\|\d_X^i \nabla \d_X^{\ell-i} \pi \|
_{L^{r_{\ell }}_t(L^p) }+\|\d_X^i \d_t\d_X^{\ell-i} v \|
_{L^{r_{\ell }}_t(L^p) } \lesssim J_{\ell}^{\ell+1}.
\end{split} \eeq
\end{rmk}

\begin{proof} It is easy to observe that
\beno
\begin{split}
&\|\p_X\na X-\na\p_XX\|_{L^\infty_t(L^\infty)}+\|\p_X\na
X-\na\p_XX\|_{L^\infty_t(W^{1,p})}\\
&+\|\p_X\na^2X-\na^2\p_XX\|_{L^\infty_t(L^p)}+\|\p_X\p_tX-\p_t\p_XX\|_{L^{s_1}_t(W^{1,p})}\\
&\leq C\|\na X\|_{L^\infty_t(W^{1,p})}\bigl(\|\na
X\|_{L^\infty_t(W^{1,p})}+\|\p_tX\|_{L^{s_1}_t(W^{1,p})}\bigr)\lesssim
J_1^2.
\end{split}
\eeno This shows that \eqref{S5eq3} holds for $\ell=1.$ And
\eqref{S5eq4} and \eqref{S5eq5} hold trivially for $\ell=1.$ And
hence Lemma \ref{S5lem1} holds for $k=1.$

Let us now assume that (\ref{S5eq3}-\ref{S5eq5b}) hold for
$\ell\leq j-1$ for $j\leq k.$ We are going to prove that
(\ref{S5eq3}-\ref{S5eq5})  also hold for $\ell=j,$  which also
implies \eqref{S5eq5a} and \eqref{S5eq5b} for $\ell=j.$ As a matter
of fact, it follows from a trivial calculation that for $i\leq j-1,$
\begin{align*}
\d_X^{i+1}\nabla\d_X^{j-i-1} f &=\nabla\d_X^j f+\sum_{m=0}^{i}
\d_X^m[\d_X,\nabla]\d_X^{j-m-1}f
\\
&=\nabla\d_X^j f -\sum_{m=0}^{i} \d_X^m(\nabla X\cdot\nabla
\d_X^{j-m-1}f)
\\
&=\nabla\d_X^j f -\sum_{m=0}^{i}\sum_{n=0}^{m} C_m^n \d_X^n\nabla X
\cdot\d_X^{m-n}\nabla \d_X^{j-m-1}f,
\end{align*}
from which and  the induction assumptions, we infer
\beq\label{S5eq6}
\begin{split}
&\bigl\|\d_X^{i+1}\nabla\d_X^{j-i-1} X-\nabla\d_X^j
X\bigr\|_{L^\infty_t(L^\infty)}+\bigl\|\d_X^{i+1}\nabla\d_X^{j-i-1}
X-\nabla\d_X^j X\bigr\|_{L^\infty_t(W^{1,p})}\\
&\lesssim \sum_{m=0}^{i}\sum_{n=0}^{m} \|\p_X^n\na
X\|_{L^\infty_t(W^{1,p})}\|\p_X^{m-n}\na\p_X^{j-m-1}X\|_{L^\infty_t(W^{1,p})}\\
&\lesssim \sum_{m=0}^{i}\sum_{n=0}^{m}
 J_{n+1}^{n+1}  J_{j-n }^{j-n}
\lesssim J_j^{j+1}.
\end{split} \eeq

By the same manner, it also easy to observe that for $i\leq j-1,$
\begin{align*}
\d_X^{i+1}\nabla^2\d_X^{j-i-1} f &=\nabla^2\d_X^j f +\sum_{m=0}^{i}
\d_X^m[\d_X,\nabla^2]\d_X^{j-m-1}f
\\
&=\nabla^2\d_X^j f -\sum_{m=0}^{i}\d_X^m \bigl( \nabla^2
X\cdot\nabla \d_X^{j-m-1}f + 2\nabla X\cdot\nabla^2 \d_X^{j-m-1}f
\bigr)
\\
&=\nabla^2\d_X^j f -\sum_{m=0}^{i}\sum_{n=0}^{m} C_m^n\Bigl(
\d_X^n\nabla^2 X \cdot\d_X^{m-n}\nabla \d_X^{j-m-1}f\\
&\qquad\qquad\qquad\qquad\qquad\quad+2 \d_X^n(\nabla X)
\cdot\d_X^{m-n}\nabla^2 \d_X^{j-m-1}f \Bigr).
\end{align*}
This together with the inductive assumptions ensures that \beq
\label{S5eq7}
\begin{split}
\|\p_X^{i+1}&\na^2\p_X^{j-i-1}X-\na^2\p_X^jX\|_{L^\infty_t(L^p)}\\
&\lesssim
\sum_{m=0}^i\sum_{n=0}^m\Bigl(\|\p_X^{m-n}\na\p_X^{j-m-1}X\|_{L^\infty_t(L^\infty)}\|\p_X^n\na^2X\|_{L^\infty_t(L^p)}\\
&\qquad\qquad\quad
+\|\p_X^{m-n}\na^2\p_X^{j-m-1}X\|_{L^\infty_t(L^p)}\|\p_X^n\na
X\|_{L^\infty_t(L^\infty)}\Bigr) \lesssim J_j^{j+1}.
\end{split}
\eeq

Finally notice that
\begin{align*}
\d_X^{i+1}\d_t\d_X^{j-i-1} f&=\d_t\d_X^j f +\sum_{m=0}^{i}
\d_X^m[\d_X,\d_t]\d_X^{j-m-1}f
\\
&=\d_t\d_X^j f -\sum_{m=0}^{i} \d_X^m( \d_t X\cdot\nabla
\d_X^{j-m-1}f )
\\
&=\d_t\d_X^j f -\sum_{m=0}^{i}\sum_{n=0}^{m} C_m^n  \d_X^n\d_t X
\cdot\d_X^{m-n}\nabla \d_X^{j-m-1}f,
\end{align*}
which together with the inductive assumptions ensures that
\beq\label{S5eq8}
\begin{split}
&\bigl\|\p_X^{i+1}\p_t\p_X^{j-i-1}X-\p_t\p_X^jX\bigr\|_{L^{s_j}_t(W^{1,p})}\\
&\qquad\lesssim \sum_{m=0}^{i}\sum_{n=0}^{m} C_m^n\|\d_X^n\d_t
X\|_{L^{s_j}_t(W^{1,p})} \|\d_X^{m-n}\nabla
\d_X^{j-m-1}X\|_{L^\infty_t(W^{1,p})}\lesssim J_j^{j+1}.
\end{split}
\eeq \eqref{S5eq6} together with \eqref{S5eq7} and \eqref{S5eq8}
shows that \eqref{S5eq3} holds for $\ell=j.$

Exactly by the same manner, and using the fact that for any $r_j\in
\bigl]1,\frac{2k}{k+j\e}\bigr[$, there exist $s_{n+1}\in \bigl]2,
\f{2k}{(n+1)\e}\bigr[$ and $ s_{j-n-1}\in \bigl]2,
\f{2k}{(j-n-1)\e}\bigr[$, $n=0,\cdots, j-1$ such that
$$
\frac{1}{r_j} = \frac{1}{s_{n+1}} +\frac{1}{s_{j-n-1}},
$$
we can also prove that \eqref{S5eq4} and \eqref{S5eq5} holds for
$\ell=j.$ This completes the proof of Lemma \ref{S5lem1}.
\end{proof}

Before proceeding, let us  take the operator $\d_X^{\ell-1}$ to
\eqref{S3eq8} to get \beq\label{S5eq9} \d_t\d_X^\ell v
+v\cdot\nabla\d_X^\ell v -(1+a)\bigl(\Delta\d_X^\ell
v-\nabla\d_X^\ell\pi\bigr) =F_\ell(v,\pi), \quad\forall
\ell=2,\cdots,k. \eeq where $F_\ell(v,\pi)$ can be inductively
defined as \beq\label{S5eq10} F_\ell(v,\pi)
=F_1(\d_X^{\ell-1}v,\d_X^{\ell-1}\pi) +\d_X F_{\ell-1}(v,\pi),
\quad\ell\geq 2. \eeq

\begin{lem}\label{S5lem2}
{\sl For $\ell=2,\cdots,k$ and   for all $ r_{\ell}\in
\bigl]1,\frac{2k}{k+\ell\e}\bigr[,$  there holds \beq\label{S5eq12}
\begin{split}
\|  F_{\ell }\|_{L^{r_{\ell}}_t(L^p)} \lesssim &
  J_{\ell-1}^{\ell+2}
+J_0\|\d_{X_0}^{\ell}a_0\|_{L^\infty_t(L^\infty)}+\Bigl(\int_0^t\Bigl(\|\na
v(t')\|_{L^\infty}^{r_\ell}\|\Delta\p_X^{\ell-1}
X(t')\|_{L^p}^{r_\ell}\\
&\qquad\qquad\quad+\bigl(\|\na^2
v(t')\|_{L^p}^{r_\ell}+\|\na\pi(t')\|_{L^p}^{r_\ell}\bigr)
\|\nabla \d_X^{\ell-1}X(t')\|_{L^\infty}^{r_\ell}\Bigr)\,dt'\Bigr)^{\f1{r_\ell}}.
\end{split} \eeq}
\end{lem}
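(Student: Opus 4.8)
The plan is to reduce everything to the base quantity $F_1$ by unfolding the recursion \eqref{S5eq10}. A straightforward induction on $\ell$ starting from \eqref{S5eq10} gives the closed form
\beq
F_\ell(v,\pi)=\sum_{m=0}^{\ell-1}\d_X^{\ell-1-m}F_1\bigl(\d_X^m v,\d_X^m\pi\bigr),
\eeq
so it suffices to estimate each summand $\d_X^{\ell-1-m}F_1(\d_X^m v,\d_X^m\pi)$ in $L^{r_\ell}_t(L^p)$. Recalling from \eqref{S3eq8} that $F_1(w,q)=\d_X a(\D w-\na q)-(1+a)\bigl([\D;\d_X]w-\sum_i\na X^i\p_i q\bigr)$ with $[\D;\d_X]w=\D X\cdot\na w+2\sum_i\p_iX\cdot\na\p_i w$, I would expand $\d_X^{\ell-1-m}$ across these products by the Leibniz rule for $\d_X=X\cdot\na$. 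This produces a finite sum of terms, each of which is a product of a factor built from $a$ or $X$ (carrying some number of $\d_X$'s and at most two spatial derivatives) and a factor built from $v$ or $\pi$ (carrying some number of $\d_X$'s together with $\D$, $\na^2$ or $\na$).

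Next I would rewrite every factor of the form $\d_X^i\na\d_X^j(\cdot)$ or $\d_X^i\na^2\d_X^j(\cdot)$ as $\na\d_X^{i+j}(\cdot)$, resp. $\na^2\d_X^{i+j}(\cdot)$, plus lower-order commutator errors, invoking Lemma \ref{S5lem1} together with its consequences \eqref{S5eq5a} and \eqref{S5eq5b}. After this reorganization one controls each resulting factor by a single striated norm appearing in some $J_i$, and the product is estimated by H\"older's inequality in time; here I would use the splitting $\tfrac1{r_\ell}=\tfrac1{s_{n+1}}+\tfrac1{s_{\ell-n-1}}$ already exploited at the end of the proof of Lemma \ref{S5lem1} to distribute the time integrability. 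All terms in which \emph{no} single factor carries the full top order --- i.e. neither $a$ nor $X$ is hit by all of the available $\d_X$'s and spatial derivatives --- involve only quantities of order $\le\ell-1$ in $(v,\pi,X)$ and in $a$; counting the factors in the generic such product (up to $\ell$ copies of $X$, one of $v$ or $\pi$, one of $a$) yields the bound $J_{\ell-1}^{\ell+2}$.

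The heart of the argument is the isolation of the two \emph{critical} families of terms, which must be treated separately rather than absorbed into $J_{\ell-1}^{\ell+2}$. The first is the $m=0$ contribution in which all $\ell-1$ copies of $\d_X$ land on the single factor $\d_X a$, producing $\d_X^\ell a\,(\D v-\na\pi)$; since $\d_X^\ell a$ solves the transport equation and hence $\|\d_X^\ell a\|_{L^\infty_t(L^\infty)}=\|\d_{X_0}^\ell a_0\|_{L^\infty}$ by \eqref{S3eq2}, while $(\D v,\na\pi)$ is bounded in $L^{r_\ell}_t(L^p)$ by $J_0$ (as $r_\ell<2$), this term is controlled by $J_0\|\d_{X_0}^\ell a_0\|_{L^\infty_t(L^\infty)}$. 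The second family is the $m=0$ contribution in which all the derivatives fall on $X$: from $\D X\cdot\na v$ one gets $\D\d_X^{\ell-1}X\cdot\na v$, from $\p_iX\cdot\na\p_i v$ one gets $\na\d_X^{\ell-1}X\cdot\na^2 v$, and from $\na X\cdot\na\pi$ one gets $\na\d_X^{\ell-1}X\cdot\na\pi$. Crucially, $\|\D\d_X^{\ell-1}X\|_{L^\infty_t(L^p)}$ and $\|\na\d_X^{\ell-1}X\|_{L^\infty_t(L^\infty)}$ are exactly the quantities sitting inside $J_\ell$ on which Proposition \ref{S5prop1} aims to close; hence they cannot yet be bounded by $J_\ell$, and I would instead retain them inside the explicit time integral on the right-hand side of \eqref{S5eq12}, keeping the estimate in a form amenable to a later Gronwall argument --- exactly as was done for $\ell=1$ in \eqref{S3eq11}.

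I expect the main obstacle to be the combinatorial bookkeeping: verifying that, after the Leibniz expansion and the commutator replacements from Lemma \ref{S5lem1}, the \emph{only} terms failing to be of order $\le\ell-1$ in both the coefficient factor and the solution factor are precisely the critical ones above, and that every remaining term admits a matching H\"older splitting of the time exponent into admissible indices $s_{n+1},s_{\ell-n-1}$. Securing the power $\ell+2$ in $J_{\ell-1}^{\ell+2}$ and checking that no surviving term demands a striated norm of order $\ge\ell$ of $(v,\pi)$ beyond those already placed in $J_{\ell-1}$ or isolated as critical is the delicate part; the smallness of $\|a_0\|_{L^\infty}$ is not needed here, being used only afterwards to absorb $\|a\,\D\d_X^\ell v\|$ into the left-hand side of the maximal-regularity estimate.
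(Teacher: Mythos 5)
Your proposal follows essentially the same route as the paper's proof: unfolding the recursion \eqref{S5eq10} to $F_\ell=\sum_{i=0}^{\ell-1}\d_X^{\ell-1-i}F_1(\d_X^i v,\d_X^i\pi)$, expanding by Leibniz, controlling the commutator errors via Lemma \ref{S5lem1}, isolating exactly the same critical terms — $\d_X^\ell a\,(\D v-\na\pi)$ handled through \eqref{S3eq2} and \eqref{S4eq2}, and the top-order $X$ terms $\d_X^{\ell-1}\D X\cdot\na v$, $\d_X^{\ell-1}\na X:\na^2 v$, $\d_X^{\ell-1}\na X\cdot\na\pi$ swapped to $\D\d_X^{\ell-1}X$, $\na\d_X^{\ell-1}X$ via \eqref{S5eq3} and kept inside the time integral for the later Gronwall closure — and bounding all remaining terms by $J_{\ell-1}^{\ell+2}$ with the same time-exponent splitting. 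The plan, including the remark that the smallness of $\|a_0\|_{L^\infty}$ plays no role in this lemma, is correct and matches the paper's argument.
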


\begin{proof}
 In view of \eqref{S5eq10}, one gets, by induction, that
$$
F_\ell(v,\pi)
 =F_1(\d_X^{\ell-1}v,\d_X^{\ell-1}\pi)
+\d_X F_{\ell-1}(v,\pi) =\sum_{i=0}^{\ell-1}
\d_X^{\ell-1-i}F_1(\d_X^i v,\d_X^i \pi),
$$
which along with the definition of $F_1$ given by \eqref{S3eq8}, we
write
\begin{align*}
F_\ell(v,\pi) =\sum_{i=0}^{\ell-1} \d_X^{\ell-1-i} \Bigl(&
\d_Xa\bigl(\Delta \d_X^i v-\nabla\d_X^i \pi\bigr)
\\
&-(1+a)\bigl(2\nabla X:\nabla^2\d_X^i v - \nabla X\cdot\nabla\d_X^i
\pi + \Delta X\cdot\nabla \d_X^i v\bigr)\Bigr).
\end{align*}
Correspondingly $F_\ell$ reads
\begin{align*}
F_\ell(v,\pi) =&\sum_{i=0}^{\ell-1}\sum_{n=0}^{\ell-1-i}
C^n_{\ell-1-i}\Bigl( \d_X^{n+1}a\, \d_X^{\ell-1-i-n} \bigl(\Delta
\d_X^i v-\nabla\d_X^i \pi\bigr)
\\
&\qquad\qquad\qquad\quad -2 \d_X^{n }(1+a)\, \sum_{m=0}^{\ell-1-i-n}
C_{\ell-1-i-n}^m \d_X^m\nabla X :\d_X^{\ell-1-i-n-m}\nabla^2\d_X^i v
\\
&\qquad\qquad\qquad\quad +\d_X^{n}(1+a) \sum_{m=0}^{\ell-1-i-n}
C_{\ell-1-i-n}^m \d_X^m\nabla X \cdot\d_X^{\ell-1-i-n-m}\nabla\d_X^i
\pi
\\
&\qquad\qquad\qquad\quad - \d_X^{n }(1+a)\, \sum_{m=0}^{\ell-1-i-n}
C_{\ell-1-i-n}^m \d_X^m\Delta X \cdot\d_X^{\ell-1-i-n-m}\nabla\d_X^i
v\Bigr)
\\
\eqdefa & F_\ell^1+F_\ell^2 +F_\ell^3+F_\ell^4.
\end{align*}
It is easy to observe that
\begin{align*}
F_\ell^1 &=\sum_{i=1}^{\ell-1}\sum_{n=0}^{\ell-1-i} C^j_{\ell-1-i}
\d_X^{n+1}a\, \d_X^{\ell-1-i-n} (\Delta \d_X^i v-\nabla\d_X^i \pi)
\\
&\quad +\sum_{n=0}^{\ell-2} C^n_{\ell-1} \d_X^{n+1}a\,
\d_X^{\ell-1-n} (\Delta  v-\nabla \pi) +\d_X^{\ell}a\, (\Delta
v-\nabla \pi),
\end{align*}
from which and Lemma \ref{S5lem1}, we infer \beno \begin{split}
\bigl\|F_\ell^1-& \d_X^{\ell}a\bigl(\Delta   v-\nabla
\pi\bigr)\bigr\| _{L^{r_{\ell-1}}_t(L^p)}\\
 \leq &
\sum_{i=1}^{\ell-1}\sum_{n=0}^{\ell-1-i} C^n_{\ell-1-i}
\|\d_X^{n+1}a\|_{L^\infty_t(L^\infty)}\|\d_X^{\ell-1-i-n} (\Delta
\d_X^i v-\nabla\d_X^i \pi)\| _{L^{r_{\ell-1}}_t(L^p)}
\\
&\qquad\qquad +\sum_{n=0}^{\ell-2} C^n_{\ell-1}
\|\d_X^{n+1}a\|_{L^\infty_t(L^\infty)}\| \d_X^{\ell-1-n}(\Delta
v-\nabla \pi)\| _{L^{r_{\ell-1}}_t(L^p)} \lesssim
J_{\ell-1}^{\ell+2}.
\end{split}
\eeno By the same manner, we also have
$$\longformule{\bigl\|
 F_\ell^2+
  2(1+a)
 \d_X^{\ell-1} \nabla X
 :\nabla^2 v\bigr\| _{L^{r_{\ell-1}}_t(L^p)}+\bigl\|
 F_\ell^3-
  (1+a)
 \d_X^{\ell-1} \nabla X
 \cdot\nabla \pi\bigr\| _{L^{r_{\ell-1}}_t(L^p)}}{{}+\bigl\|
  F_\ell^4+
  (1+a)
 \d_X^{\ell-1} \Delta X
 \cdot\nabla v\bigr\| _{L^{r_{\ell-1}}_t(L^p)}
\lesssim J_{\ell-1}^{\ell+2}.} $$
As a consequence, we conclude
\beq\label{S5eq11}
\begin{split}
&\Bigl\| F_\ell (v,\pi) - \d_X^{\ell}a\, \bigl(\Delta
v-\nabla \pi\bigr)
 +(1+a)\bigl(2
 \d_X^{\ell-1} \nabla X
 :\nabla^2 v
 \\
 &\qquad\qquad\qquad\qquad\quad-
 \d_X^{\ell-1} \nabla X
 \cdot\nabla \pi
+
 \d_X^{\ell-1} \Delta X
 \cdot\nabla v\bigr)
 \Bigr\|_{L^{r_{\ell-1}}_t(L^p)}
\lesssim J_{\ell-1}^{\ell+2},
\end{split} \eeq
which ensures \eqref{S5eq12} thanks to \eqref{S4eq2}, \eqref{S3eq2}
and \eqref{S5eq3}. This completes the proof of Lemma \ref{S5lem2}.
\end{proof}

We also need the following estimate for the pressure function.

\begin{lem}\label{S5lem3}
{\sl For $\ell=2,\cdots, k$ and $r_\ell\in \bigl]1,
\f{2k}{k+\ell\e}\bigr[,$ one has \beq\label{S5eq13}
\begin{split}
\| \na\p_X^\ell\pi\|_{L^{r_{\ell}}_t(L^p)}& \lesssim
  J_{\ell-1}^{\ell+2}
+J_0\|\d_{X_0}^{\ell}a_0\|_{L^\infty_t(L^\infty)}+\|a_0\|_{L^\infty}\|\D\p_X^\ell
v\|_{L^{r_\ell}_t(L^p)}\\
&\quad+\Bigl(\int_0^t
\|a_0\|_{L^\infty} \|\nabla v(t')\|_{L^\infty}^{r_\ell}\|\Delta \d_X^{\ell-1}X(t')\|_{L^p}\\
&\qquad +\|(v\cdot\nabla v(t'), \Delta v(t'),
\na\pi(t'))\|_{L^p}^{r_\ell}\|\na\d_X^{\ell-1}X(t')\|_{L^\infty}^{r_\ell}\,dt'\Bigr)^{\f1{r_\ell}}.
\end{split} \eeq}
\end{lem}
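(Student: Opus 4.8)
The plan is to obtain a non-constant-coefficient elliptic equation for $\partial_X^\ell\pi$ by taking the spatial divergence of \eqref{S5eq9}, in exact parallel with the derivation of \eqref{S3eq9} in the case $\ell=1$. First I would rewrite \eqref{S5eq9} as $(1+a)\nabla\partial_X^\ell\pi=(1+a)\Delta\partial_X^\ell v-\partial_t\partial_X^\ell v-v\cdot\nabla\partial_X^\ell v+F_\ell$ and apply $\dive$, which yields $\Delta\partial_X^\ell\pi+\dive(a\nabla\partial_X^\ell\pi)=\text{(RHS)}$. Using $\dive v=\dive X=0$ together with the commutator identity $[\Delta;\partial_X]f=\Delta X\cdot\nabla f+2\sum_{i=1}^2\partial_iX\cdot\nabla\partial_i f$, all the Laplacians can be shifted onto $v$ and the right-hand side put in divergence form, just as for \eqref{S3eq9}.

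Next I would invert the elliptic operator exactly as in the passage from \eqref{S3eq9} to \eqref{S3eq10}. Writing the equation as $\nabla\partial_X^\ell\pi=\nabla\Delta^{-1}(\text{RHS})-\nabla\Delta^{-1}\dive(a\nabla\partial_X^\ell\pi)$ and using the $L^p$-boundedness of $\nabla\Delta^{-1}\dive$, the self-referential term is bounded by $C\|a\|_{L^\infty}\|\nabla\partial_X^\ell\pi\|_{L^p}$ and, since $\|a(t)\|_{L^\infty}\le\|a_0\|_{L^\infty}$ is small by the transport structure behind \eqref{S3eq2}, is absorbed to the left. The remaining $a$-term $\dive(a\Delta\partial_X^\ell v)$ then produces the third summand $\|a_0\|_{L^\infty}\|\Delta\partial_X^\ell v\|_{L^{r_\ell}_t(L^p)}$ of \eqref{S5eq13}. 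For the genuine forcing I would invoke Lemma \ref{S5lem2}: the estimate \eqref{S5eq12} already isolates the bound $J_{\ell-1}^{\ell+2}$, the highest-order density contribution $J_0\|\partial_{X_0}^\ell a_0\|_{L^\infty_t(L^\infty)}$ coming from $\partial_X^\ell a(\Delta v-\nabla\pi)$ via \eqref{S3eq2} and \eqref{S4eq2}, and the two time-integral terms. The commutator pieces arising from $\dive(\partial_t\partial_X^\ell v)$ and $\dive(v\cdot\nabla\partial_X^\ell v)$ — for instance $\partial_t X\cdot\nabla\partial_X^{\ell-1}v$ and $\nabla v:\nabla\partial_X^\ell v$ — are handled by Remark \ref{S5rmk1} and Lemma \ref{S5lem1}, each being bounded by a product of the $J_m$'s that is absorbed into $J_{\ell-1}^{\ell+2}$.

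The main obstacle is the careful separation of the genuinely top-order-in-$X$ terms, which cannot be folded into $J_{\ell-1}$, from everything else. These are exactly the terms in which $\partial_X^{\ell-1}X$ appears with two derivatives, $\Delta\partial_X^{\ell-1}X$, or with one derivative, $\nabla\partial_X^{\ell-1}X$, paired with a zeroth-order velocity or pressure factor: the term $a\,\partial_X^{\ell-1}\Delta X\cdot\nabla v$ gives the contribution $\|a_0\|_{L^\infty}\|\nabla v\|_{L^\infty}\|\Delta\partial_X^{\ell-1}X\|_{L^p}$, while $\partial_X^{\ell-1}\nabla X$ tested against $(v\cdot\nabla v,\Delta v,\nabla\pi)$ — this last combination being $\partial_t v$ modulo $a$-factors through the momentum equation of \eqref{INS} — gives the factor $\|(v\cdot\nabla v,\Delta v,\nabla\pi)\|_{L^p}^{r_\ell}\|\nabla\partial_X^{\ell-1}X\|_{L^\infty}^{r_\ell}$. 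Because $\partial_X^{\ell-1}X$ is precisely the quantity controlled one level up, these must be retained as the time integrals in \eqref{S5eq13} rather than estimated in a fixed norm. A related delicate point is the self-reference of the pressure: $F_\ell$ and the divergence computation both involve $\nabla\partial_X^\ell\pi$ and lower pressure derivatives, so one must check that the only top-order occurrence is the $a\nabla\partial_X^\ell\pi$ absorbed by the elliptic inversion, every other pressure term reducing either to $\nabla\pi$ (controlled by $J_0$) or to $\nabla\partial_X^i\pi$ with $i\le\ell-1$ (controlled by $J_{\ell-1}$); this is what renders the estimate non-circular.
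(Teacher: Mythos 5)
Your overall architecture is sound, and it is genuinely different from the paper's: the paper never takes the divergence of the level-$\ell$ momentum equation \eqref{S5eq9}. Instead it applies $\p_X$ repeatedly to the level-zero pressure equation $\dive\bigl((1+a)\na\pi\bigr)=\dive G_0$ with $G_0=-v\cdot\na v+a\D v$, obtaining the recursion \eqref{S5eq13a} for $G_\ell$, and then runs a two-parameter induction on $\|\p_X^iG_{\ell-j}\|_{L^{r_{\ell-1}}_t(L^p)}$ (estimate \eqref{S5eq13b}) to isolate the leading terms \eqref{S5eq13e}. The decisive advantage of that organization is that $\d_tv$ is eliminated once and for all at level zero, using $\dive\d_tv=0$, \emph{before} any $\p_X$ is applied: no time derivative of $v$ ever enters the forcing $G_\ell$.

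This is exactly where your proposal has a genuine gap. Taking $\dive$ of \eqref{S5eq9} forces you to handle $\dive(\d_t\p_X^\ell v)$, and your claim that its commutator pieces ``are handled by Remark \ref{S5rmk1} and Lemma \ref{S5lem1}, each being bounded by a product of the $J_m$'s that is absorbed into $J_{\ell-1}^{\ell+2}$'' cannot be right as stated. Lemma \ref{S5lem1} (estimates \eqref{S5eq4}--\eqref{S5eq5}) only compares different orderings of $\d_t,\na,\p_X$ \emph{modulo} $J_{\ell-1}$-controlled errors, while the reordered quantities themselves are bounded in \eqref{S5eq5b} only by $J_\ell^{\ell+1}$, i.e.\ at top order; indeed $\d_t\p_X^\ell v$ is one of the very quantities being estimated at this level, so no commutator lemma can reduce its $L^p$ norm to $J_{\ell-1}$. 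What is actually needed is to rewrite $\dive(\d_t\p_X^\ell v)$ as the divergence of an $L^p$-controlled vector field by exploiting $\dive\d_tv=0$ and $\dive X=0$, e.g.\ through repeated use of $\dive(X\cdot\na w)=\dive(w\cdot\na X)+\dive(X\,\dive w)$. That recursion produces, on the one hand, terms like $\na X:\na\d_t\p_X^{m}v$ whose $L^p$ norms are controlled by \emph{no} $J_m$ (no norm of $\na\d_t\p_X^mv$ is ever propagated) and which must themselves be pushed back into divergence form, and, on the other hand, terms like $\d_tv\cdot\p_X^{m}\na X$, which (after replacing $\d_tv$ by the momentum equation) are the true source of the factor $\|(v\cdot\na v,\D v,\na\pi)\|_{L^p}\|\na\p_X^{\ell-1}X\|_{L^\infty}$ in \eqref{S5eq13}. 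Carrying this out for all $\ell\le k$ is an induction of essentially the same weight as the paper's \eqref{S5eq13b}; until you supply it, the proof is incomplete at its crux. By contrast, the small discrepancies in your end result — invoking \eqref{S5eq12} as a black box yields the time integrals without the $\|a_0\|_{L^\infty}$ factor and without the $v\cdot\na v$ entry — are harmless, since those terms are only fed into Gronwall's inequality in the proof of Proposition \ref{S5prop1}.
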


\begin{proof} Motivated by \eqref{S2eq11a} and \eqref{S3eq9},  we assume that $\p_X^{\ell-1}\pi$ verifies
\beno \dive\bigl((1+a)\na \p_X^{\ell-1}\pi\bigr)=\dive G_{\ell-1}.
\eeno As $\dive X=0,$ we have \beno \p_X\dive
f=\dive(\p_Xf)-\dive(f\cdot\na X), \eeno so that we have
\beq\label{S5eq13a}
\begin{split}
\dive\bigl((1+a)\na
\p_X^{\ell}\pi\bigr)=\dive\Bigl(&\p_XG_{\ell-1}-G_{\ell-1}\cdot\na
X-\p_Xa\na\p_X^{\ell-1}\pi\\
&+(1+a)\bigl(\na\p_X^{\ell-1}\pi\cdot\na X+\na
X\cdot\na\p_X^{\ell-1}\pi\bigr)\Bigr)\eqdefa \dive G_\ell.
\end{split}
\eeq

Let us assume by induction that for $0\leq i\leq j\leq \ell-1$,
\beq\label{S5eq13b} \|\p_X^iG_{\ell-1-j}\|_{L^{r_{\ell-1}}_t(L^p)}
\lesssim J_{\ell-1}^{\ell+1+i-j}. \eeq It is easy to observe from
\eqref{S5eq13a} that \beno
\begin{split}
\p_X^iG_{\ell-j}=\p_X^i\Bigl(&\p_XG_{\ell-1-j}
-G_{\ell-1-j}\cdot\na
X-\p_Xa\na\p_X^{\ell-1-j}\pi\\
&+(1+a)\bigl(\na\p_X^{\ell-1-j}\pi\cdot\na X+\na
X\cdot\na\p_X^{\ell-1-j}\pi\bigr)\Bigr).
\end{split}
\eeno We shall decompose the estimate of $\p_X^iG_{\ell-j}$ with
$0\leq i\leq j\leq \ell$ into the following cases:
\begin{itemize}
\item[(1)]  $0\leq i< j\leq \ell-2.$
It follows from the inductive assumption \eqref{S5eq13b} and Remark
\ref{S5rmk1} that for $0\leq i\leq j\leq \ell-2$, \beno
\begin{split}
&\|\d_X^{i+1}G_{\ell-1-j}\|_{L^{r_{\ell-1}}_t(L^p)} \lesssim
J_{\ell-1}^{\ell+2+i-j}, \hbox{ if\  moreover }\ i<j,
\\
&\|\p_X^i(G_{\ell-1-j}\cdot\nabla X)\|_{L^{r_{\ell-1}}_t(L^p)}
\lesssim \sum_{0\leq m\leq i}
\|\p_X^m(G_{\ell-1-j})\|_{L^{r_{\ell-1}}_t(L^p)} \|\p_X^{i-m}\nabla
X\|_{L^\infty_t(L^\infty)}
\\
&\qquad\qquad\qquad\qquad\qquad\qquad\lesssim
J_{\ell-1}^{\ell+1+m-j} J_{i-m+1}^{i-m+1} \lesssim
J_{\ell-1}^{\ell+2+i-j},
\\
&\|\p_X^i(\p_X a\nabla\p_X^{\ell-1-j}\pi)\|_{L^{r_{\ell-1}}_t(L^p)}
\lesssim\sum_{0\leq m\leq i} \|\p_X^{m+1}a\|_{L^\infty_t(L^\infty)}
\|\p_X^{i-m}\nabla \p_X^{\ell-1-j}\pi\|_{L^{r_{\ell-1}}_t(L^p)}
\\
&\qquad\qquad\qquad\qquad\qquad\qquad\quad\lesssim J_{m+1}
J_{\ell-1+i-m-j}^{\ell+i-m-j} \lesssim J_{\ell-1}^{\ell+1+i-j},
\end{split}
\eeno and
\begin{align*}
&\Bigl\|\p_X^i\Bigl((1+a)\nabla\p_X^{\ell-1-j}\pi\otimes\nabla X\Bigr)\Bigr\|_{L^{r_{\ell-1}}_t(L^p)}
\\
&\lesssim\sum_{\substack{0\leq m\leq i\\
 0\leq n\leq i-m}} \|\p_X^m
(1+a)\|_{L^\infty_t(L^\infty)} \|\p_X^{n}\nabla
\p_X^{\ell-1-j}\pi\|_{L^{r_{\ell-1}}_t(L^p)} \|\p_X^{i-m-n}\nabla
X\|_{L^\infty_t(L^\infty)}
\\
&\lesssim
J_{m} J_{\ell-1+n-j}^{\ell+n-j} J_{i+1-m-n}^{i+1-m-n}
\lesssim
J_{\ell-1}^{\ell+2+i-j}.
\end{align*}
Hence we obtain
$$
\|\p_X^i G_{\ell-j}\|_{L^{r_{\ell-1}}_t(L^p)} \lesssim
J_{\ell-1}^{\ell+2+i-j}\ \  \hbox{ when }\ \ 0\leq i<j\leq \ell-2.
$$

\item[(2)]  $0\leq i=j\leq \ell-2.$  We first deduce from case (1) that
 \beno
\p_X^iG_{\ell-i}=\p_X^{i+1}G_{\ell-1-i}
+R_{\ell}^{\ell-i}\with
\|R_{\ell}^{\ell-i}\|_{L^{r_{\ell-1}}_t(L^p)}
\lesssim
J_{\ell-1}^{\ell+2}. \eeno
Along this line, we can show that
 \beq\label{S5eq13c}
 \begin{split}
&\p_X^iG_{\ell-i}=\p_X^{\ell-1}G_{1}+R_{\ell}^{2}\with
\|R_{\ell}^{2}\|_{L^{r_{\ell-1}}_t(L^p)}\lesssim
J_{\ell-1}^{\ell+2}.
\end{split}\eeq
This   shows that
\beno \|\p_X^iG_{\ell-i}-\p_X^{\ell-1} G_1\|_{L^{r_{\ell-1}}_t(L^p)}\lesssim
J_{\ell-1}^{\ell+2}\quad\mbox{for}\ \ i=0,\cdots \ell-2,\eeno
with
\begin{align*}
&G_1=\p_X G_0-G_0\cdot\nabla X-\p_X a\nabla\pi
+(1+a)(\nabla\pi\cdot\nabla X+\nabla X\cdot\nabla\pi),
\\
&G_0=-v\cdot\nabla v+a\Delta v.
\end{align*}
Noticing that
\begin{equation}\label{S5eq13d}
\begin{array}{l}
\Bigl\|\Bigl( \p_X^{\ell-1}G_1-\p_X^\ell G_0
+G_0\cdot \p_X^{\ell-1}\nabla X
+\p_X^\ell a\nabla\pi
\\
\quad
-(1+a)(\nabla\pi\cdot\p_X^{\ell-1} \nabla X
+\p_X^{\ell-1}\nabla X\cdot\nabla\pi)\Bigr)\Bigr\|_{L^{r_{\ell-1}}_t(L^p)}
\lesssim J_{\ell-1}^{\ell+2},
\\
\|\p_X^{\ell}G_0+\p_X^\ell v\cdot\nabla v+v\cdot\p_X^\ell \nabla v
-\p_X^\ell a\Delta v-a\p_X^\ell\Delta v\|_{L^{r_{\ell-1}}_t(L^p)}
\lesssim J_{\ell-1}^{\ell+2},
\end{array}\end{equation}
and
\beno
\begin{split}
\|v\cdot\na\p_X^\ell v\|_{L^{r_\ell}_t(L^p)}\lesssim
\|v\|_{L^s_t(L^\infty)}\bigl(&\|\na
X\|_{L^\infty_t(L^p)}\|\na\p_X^{\ell-1}v\|_{L^{r_{\ell-1}}_t(L^\infty)}\\
&+\|X\|_{L^\infty_t(L^\infty)}\|\D\p_X^{\ell-1}v\|_{L^{r_{\ell-1}}_t(L^p)}\bigr),
\end{split}
\eeno \beno \|\p_X^\ell v\cdot\na v\|_{L^{r_\ell}_t(L^p)}\leq
\|X\|_{L^\infty_t(L^\infty)}\|\na\p_X^{\ell-1}v\|_{L^{r_{\ell-1}}_t(L^\infty)}\|\na
v\|_{L^s_t(L^p)}, \eeno where $s\in ]2,\infty[$ so that
$\f1{r_\ell}=\f1s+\f1{r_{\ell-1}},$
we know that
$$\|\p_X^i G_{\ell-i}\|_{L^{r_\ell}_t(L^p)}\lesssim J_{\ell}^{\ell+2}.$$

\item[(3)]  $j=\ell-1$, $0\leq i\leq \ell-2.$ Note that when $j=\ell-1$,
$$
G_{\ell-1-j}=G_0=-v\cdot\nabla v+a\Delta v,
\quad \nabla\p_X^{\ell-1-j}\pi=\nabla\pi
$$
by the same arguments as in the case $0\leq i<j\leq \ell-2$ one obtains also the   estimate  $\|\p_X^i G_{\ell-j}\|_{L^{r_{\ell-1}}_t(L^p)}\lesssim J_{\ell-1}^{\ell+2+i-j}$ for  the case $0\leq i\leq \ell-2$, $j=\ell-1$.

\item[(4)]  $j=\ell$, $0\leq i\leq \ell-1.$ In this case, there  holds
$$
\|\p_X^i G_0\|_{L^{r_{\ell-1}}_t(L^p)}
\lesssim J_{\ell-1}^{i+2}.
$$

\item[(5)]  $i=j=\ell-1$ or $i=j=\ell.$  As in the case  when $0\leq i=j\leq
\ell-2$, one has
\begin{align*}
&\|\p_X^{\ell-1}G_1\|_{L^{r_{\ell}}_t(L^p)},
\,\|\p_X^{\ell}G_0\|_{L^{r_{\ell}}_t(L^p)} \lesssim
J_{\ell}^{\ell+2}.
\end{align*}
\end{itemize}
To conclude, we have arrived at the following for $0\leq i\leq j\leq \ell\leq k$:
$$
\|\p_X^i G_{\ell-j}\|_{L^{r_\ell}_t(L^p)}
\lesssim J_{\ell}^{\ell+2+i-j},
$$
and hence   \eqref{S5eq13b} follows by induction.

Thanks to \eqref{S5eq13b}, \eqref{S5eq13c} and \eqref{S5eq13d}, we infer
\beq\label{S5eq13e}
\begin{split}
\bigl\|G_\ell-\d_X^\ell a\Delta v-a\p_X^\ell\Delta v
+G_0\cdot\p_X^{\ell-1}\na X+\p_X ^\ell a \nabla\pi
\\
-(1+a)\bigl(\na\pi\cdot\p_X^{\ell-1}\na X+
&\p_X^{\ell-1}\na X\cdot\na\pi\bigr)\bigr\|_{L^{r_{\ell-1}}_t(L^p)}
\lesssim J_{\ell-1}^{\ell+2}.
\end{split} \eeq We apply \eqref{S5eq5} to deal with the term $[\d_X^\ell\Delta v; \Delta\d_X^\ell v]$
and \eqref{S5eq3} to the term $[\d_X^{\ell-1}\nabla X;
\nabla\d_X^{\ell-1}X]$. Then \eqref{S5eq13} follows from
\eqref{S5eq13a} and \eqref{S5eq13e}.

 \end{proof}

We  now turn to the proof of Proposition \ref{S5prop1}.

\begin{proof}[Proof of Proposition \ref{S5prop1}]

\no$\bullet$ \underline{The estimates of   $\p_t\d_X^\ell v,
\Delta\d_X^\ell v$ and $\na\p_X^\ell \pi$}

As in the previous sections, we reformulate \eqref{S5eq9} as
\beq\label{S5eq15}
\begin{split}
\d_X^\ell v(t) =e^{t\Delta}\d_{X_0}^\ell v_0 +\int^t_0
e^{(t-t')\Delta}\tilde F_\ell(t') \,dt'\with\\
\tilde F_\ell \eqdefa -v\cdot\nabla \d_X^\ell v +a\Delta \d_X^\ell v
-(1+a)\nabla\d_X^\ell \pi +F_\ell(v,\pi),
\end{split} \eeq
from which and \eqref{S3eq0}, we infer \beq\label{S5eq15a}
\|\p_t\p_X^\ell v\|_{L^{r_\ell}_t(L^p)}+\|\D\p_X^\ell
v\|_{L^{r_\ell}_t(L^p)}\leq
 C\bigl(\|\p_{X_0}^\ell v_0\|_{W^{1-\f{\ell}k\e,p}}+\|\tilde{F}_\ell\|_{L^{r_\ell}_t(L^p)}\bigr).
 \eeq
 Note that for any $r_\ell\in \bigl]1,\f{2k}{k+\ell\e}\bigr[,$ there
 exist $\s_1,\s_2\in ]2,\infty[$ and $r_{\ell-1}\in
 \bigl]1,\f{2k}{k+(\ell-1)\e}\bigr[,$ $s_{\ell-1}\in
 \bigl]2,\f{2k}{(\ell-1)\e}\bigr[$ so that
 $\f1{r_\ell}=\f1{\s_1}+\f1{s_{\ell-1}}=\f1{\s_2}+\f1{r_{\ell-1}}.$
  Then it follows from Remark
\ref{S5rmk1} that
  \beno
\begin{split}
\|v\cdot\na\p_X^\ell v\|_{L^{r_\ell}_t(L^p)}\lesssim &
\|v\|_{L^{\sigma_1}_t(L^\infty)}\|\na
X\|_{L^\infty_t(L^\infty)}\|\na\p_X^{\ell-1}
v\|_{L^{s_{\ell-1}}_t(L^p)}\\
&+\|v\|_{L^{\sigma_2}_t(L^\infty)}\|
X\|_{L^\infty_t(L^\infty)}\|\na^2\p_X^{\ell-1}
v\|_{L^{r_{\ell-1}}_t(L^p)}\lesssim J_{\ell-1}^{\ell+2}.
\end{split}
\eeno Resuming the above estimate, the Estimates \eqref{S5eq12} and
\eqref{S5eq13} into \eqref{S5eq15a} and taking $\|a_0\|_{L^\infty}$
to be sufficiently small, we deduce from the maximal regularity
estimate to the heat semi-group that \beq\label{S5eq16}
\begin{split} \|&\p_t\p_X^\ell v\|_{L^{r_\ell}_t(L^p)}+\|\D\p_X^\ell
v\|_{L^{r_\ell}_t(L^p)}+\|\na\p_X^\ell\pi\|_{L^{r_\ell}_t(L^p)}\leq
 \cH_\ell(t)+
C\Bigl(\int_0^t\bigl(\|\na^2 v(t')\|_{L^p}^{r_\ell}\\
 &\qquad\qquad\quad +\|\na
\pi(t')\|_{L^p}^{r_\ell} + (1+\|v(t')\|_{L^p})^{r_\ell} \|\na
v(t')\|_{L^\infty}^{r_\ell}\bigr)\|\na\p_X^{\ell-1}
X(t')\|_{W^{1,p}}^{r_\ell}\,dt'\Bigr)^{\f1{r_\ell}}.
\end{split}
\eeq

 \no$\bullet$ \underline{The estimates of   $\Delta\d_X^{\ell-1} X$}

We first get, by applying the operator $\d_X^{\ell-1}$ to
\eqref{S1eq2}, that
$$
\d_t\d_X^{\ell-1} X+v\cdot\nabla\d_X^{\ell-1} X=\d_X^\ell v,
$$
from which, we deduce, by using $L^p$ type energy estimate and
\eqref{S3eq4a}, that \beno
\begin{split}
\|\p_X^{\ell-1}X(t)\|_{L^p}\leq
&\|\p_{X_0}^{\ell-1}X_0\|_{L^p}+\|\p_X^\ell v\|_{L^1_t(L^p)}\\
\leq
&\|\p_{X_0}^{\ell-1}X_0\|_{L^p}+\|X\|_{L^\infty_t(L^\infty)}\|\na\p_X^{\ell-1}v\|_{L^1_t(L^p)}\leq
J_{\ell-1}^{\ell+1},
\end{split}
\eeno and \beno
\begin{split}
\|\na\p_X^{\ell-1}X(t)\|_{L^p}\leq
&\|\na\p_{X_0}^{\ell-1}X_0\|_{L^p}+\int_0^t\bigl(\|\na
v(t')\|_{L^\infty}\|\na\p_X^{\ell-1}X(t')\|_{L^p}\\
&+\|\na X(t')\|_{L^\infty}\|\na
\p_X^{\ell-1}v(t')\|_{L^p}+\|X(t')\|_{L^\infty}\|\na^2\p_X^{\ell-1}v(t')\|_{L^p}\bigr)\,dt'.
\end{split}
\eeno
Applying Gronwall's inequality and using \eqref{S3eq4a}, we
obtain \beno
\begin{split}
\|\na\p_X^{\ell-1}X\|_{L^\infty_t(L^p)}\leq
&\Bigl(\|\na\p_{X_0}^{\ell-1}X_0\|_{L^p}+\|\na
X\|_{L^\infty_t(L^\infty)}\|\na
\p_X^{\ell-1}v\|_{L^1_t(L^p)}\\
&+\|X\|_{L^\infty_t(L^\infty)}\|\na^2\p_X^{\ell-1}v\|_{L^1_t(L^p)}\Bigr)\exp\bigl(\|\na
v\|_{L^1_t(L^\infty)}\bigr)\lesssim
J_{\ell-1}^{\ell+1}\exp\bigl(CJ_0\bigr).
\end{split}
\eeno This gives \beq\label{S5eq17} \|\d_X^{\ell-1}
X\|_{L^\infty_t(W^{1,p})}
 \leq \cH_\ell(t).
\eeq

By the same manner, noting that
$$
\d_t\Delta \d_X^{\ell-1} X +v\cdot\nabla \Delta \d_X^{\ell-1} X
=-\Delta v\cdot\nabla \d_X^{\ell-1} X -2\nabla
v:\nabla^2\d_X^{\ell-1} X +\Delta\d_X^\ell v,
$$
we get, by using $L^p$ type energy estimate, that \beno
\begin{split}
\|\D\p_X^{\ell-1}X(t)\|_{L^p}\leq
&\|\D\p_{X_0}^{\ell-1}X_0\|_{L^p}+\int_0^t\bigl(\|\D
v(t')\|_{L^p}\|\na\p_X^{\ell-1}X(t')\|_{L^\infty}\\
&+2\|\na
v(t')\|_{L^\infty}\|\na^2\p_X^{\ell-1}X(t')\|_{L^p}
+\|\D\p_X^\ell v(t')\|_{L^p}\bigr)\,dt'.
\end{split}
\eeno
Using \eqref{S3eq2a} and then applying Gronwall's inequality,
we achieve \beq
\begin{split}\label{S5eq18}
\| \D \d_X^{\ell-1} X\|_{L^\infty_t(L^{p})} \leq &\Bigl(
\|\Delta\d_{X_0}^{\ell-1}X_0\|_{L^p}
+\|\na\p_X^{\ell-1}X\|_{L^\infty_t(L^p)}\|\D
v\|_{L^1_t(L^p)}+\|\Delta\d_X^\ell v\|_{L^1_t(L^p)}
\Bigr)\\
&\quad\times\exp\Bigl(C( \|\nabla v\|_{L^1_t(L^\infty)} +\|\Delta
v\|_{L^1_t(L^p)}\bigr)\Bigr)
\\
\leq &\cH_\ell(t)\bigl(1+  \|\Delta\d_X^\ell v\|_{L^1_t(L^p)}\bigr).
\end{split} \eeq
 Substituting \eqref{S5eq16} into \eqref{S5eq18} gives rise to
\beno \begin{split}\|\D\p_X^{\ell-1}X(t)\|_{L^p}\leq
\cH_\ell(t)+\cH_\ell(t)&
\Bigl(\int_0^t\bigl(\|\na^2v(t')\|_{L^p}^{r_\ell}+\|\na\pi(t')\|_{L^p}^{r_\ell}
\\
&+(1+\|v(t')\|_{L^p}^{r_\ell})\|\na v(t')\|_{L^\infty}^{r_\ell}\bigr)\|\D\p_X^{\ell-1}
X(t')\|_{L^p}^{r_\ell}\,dt'\Bigr)^{\f1{r_\ell}}. \end{split} \eeno
Taking $r_\ell$ th power to the above inequality and then applying
Gronwall's inequality yields \beno
\|\D\p_X^{\ell-1}X\|_{L^\infty_t(L^p)}\leq \cH_\ell(t), \eeno which
together with \eqref{S5eq16} and \eqref{S5eq17} ensures that
\beq\label{S5eq19}
\|\p_t\p_X^{\ell}v\|_{L^{r_\ell}_t(L^p)}+\|\D\p_X^\ell
v\|_{L^{r_\ell}_t(L^p)}+\|\na\p_X^\ell\pi\|_{L^{r_\ell}_t(L^p)}+
\|\p_X^{\ell-1}X\|_{L^\infty_t(W^{2,p})}\leq \cH_\ell(t). \eeq
Correspondingly for $s_\ell$ determined by  $\frac{1}{r_\ell}=\frac
12+\frac {1}{s_\ell}$, we have \beq\label{S5eq20} \|\nabla\d_X^\ell
v\|_{L^{r_\ell}_t(L^\infty)\cap
 L^{s_\ell}_t(L^p)}+
\|\d_X^\ell v\|_{L^{s_\ell}_t(L^\infty)}+\|\d_t\d_X^{\ell-1}
X\|_{L^{s_\ell}_t(W^{1,p})} \leq \cH_\ell(t). \eeq This completes
the proof of Proposition \ref{S5prop1}.
\end{proof}

Finally we are in a position to complete the proof of Theorem
\ref{thm1}.

\begin{proof}[Proof of Theorem \ref{thm1}] In order to complete the proof of  Theorem
\ref{thm1}, we first mollify the initial data to be
$(\r_{0,n},v_{0,n}, X_{0,n})$ so that according to \cite{Lions96},
\eqref{inNS} has a unique global solution $(\r_n, v_n, X_n)$ and
then repeating the proofs of Proposition \ref{S2prop1}, Proposition
\ref{S3prop1} and Proposition \ref{S5prop1} to show that the
approximate solutions $(\r_n, v_n, X_n)$ satisfies the uniform
estimates \eqref{S4eq2}, \eqref{S3eq4a} and \eqref{S5eq2}. Finally a
standard compactness argument as that in \cite{Chemin98} can be
applied to prove the limit $(\r, v, X)$ of such approximate
solutions satisfies the required estimate of Theorem \ref{thm1}.
Since the velocity field $v\in L^1_\loc(\R^+; \mbox{Lip}),$ the
uniqueness of such solutions can be proved by using Lagrangian
formulation of \eqref{inNS} as that in \cite{DM12, HPZ}. We omit the
details here.
\end{proof}

 \setcounter{equation}{0}
\section{The proof of \eqref{initial:v0}}\label{Sect7}

The goal of this section is to present the proof of
\eqref{initial:v0}. Similar to Section 2.2 of \cite{Chemin88}, we
define the product operator $R_q$ as follows \beq\label{S6eq5a}
R_q(\alpha_1,\cdots,\alpha_m)\eqdefa\int_{[0,1]^{m}}\int_{\R^2}
\prod_{j=1}^m \alpha_j(x+f_j(\tau)2^{-q}y)\, h(\tau,y)\,dy d\tau,
\eeq where $h\in C([0,1]^m\times\R^2;\cS(\R^2)),$ $f_j\in
L^\infty([0,1]^m)$ and $f_j(\tau)\neq 0$ for $\tau \in ]0,1[^m,$ and
there exists a nonnegative function $H(t,r)\in C([0,1]^m;L^1),$
which is non-increasing  with respect to $r$ variable  so that \beq
\label{S6eq6} |h(t,y)|\leq H(t,|y|). \eeq We also recall the
definition of the maximal function to a $L^1_\loc$ function $f$ from
\cite{BCD} \beno \forall\ x\in\R^2,\quad
Mf(x)\eqdefa\sup_{r>0}\f1{\pi r^2}\int_{|y-x|\leq r}|f(y)|\,dy.
\eeno

 \begin{lem}\label{lem:Rq}
{\sl  Let $X$ be a $\delta,k-$regular vector field (see
\eqref{S6eq4}) and $\al_j$ satisfy
$\mbox{supp}\widehat{\al}_j\subset B(0,2^q),$ for $j=1,\cdots, m.$
 Then for any $\ell\leq k,$ there exist integers $\gamma_n(\la_i)$ such that
 \beq\label{S6eq7}
\begin{split}
 \bigl| (T_{X\cdot\nabla})^\ell& R_q(\alpha_1,\cdots,\alpha_m)
\bigr|\\
 & \lesssim \sum_{n,|\lambda|\leq \ell} 2^{q(\ell-|\lambda|)
\delta } \min_{i=1,\cdots,m} \Bigl\{
  M^{\gamma_n(\la_i)}\bigl( (T_{X\cdot\nabla})^{\lambda_i}\alpha_i \bigr)
\prod_{j=1,j\neq i}^m
\|(T_{X\cdot\nabla})^{\lambda_j}\alpha_j\|_{L^\infty} \Bigr\}.
\end{split} \eeq Here and in what follows, $A\lesssim B$ means that there exists a positive constant
$C$ which depends on $\|X\|^{1-\delta,k-1}_{\delta, X}$ so that $A\leq
CB.$}
\end{lem}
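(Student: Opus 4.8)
My overall strategy is to reduce the estimate for the $\ell$-fold paraproduct operator $(T_{X\cdot\nabla})^\ell$ applied to $R_q$ to a controlled sum of lower-order paraproduct expressions, exploiting both the specific convolution structure of $R_q$ in \eqref{S6eq5a} and the $\delta,k$-regularity hypothesis \eqref{S6eq4} on $X$. The key observation is that applying $T_{X\cdot\nabla}$ to a product-type integral like $R_q$ produces, by a Leibniz-type distribution of the derivative $X\cdot\nabla$ across the factors $\alpha_j(x+f_j(\tau)2^{-q}y)$, new terms that fall into two categories: those where the derivative hits one of the factors $\alpha_j$ (converting $\alpha_j$ into $\partial_X\alpha_j$, i.e.\ raising its $\lambda_j$ index), and those where the derivative is absorbed by differentiating $X$ itself, which is where the factor $2^{q\delta}$ (from the $\delta$-regularity, with the frequency localization supp$\,\widehat\alpha_j\subset B(0,2^q)$) and the maximal-function operators $M^{\gamma_n}$ will enter.

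First I would establish the base case $\ell=0$, which is essentially the statement that $R_q(\alpha_1,\dots,\alpha_m)$ itself is pointwise dominated by $\min_i M^{\gamma}(\alpha_i)\prod_{j\neq i}\|\alpha_j\|_{L^\infty}$; this follows directly from the bound \eqref{S6eq6} on the kernel $h$ together with the non-increasing majorant $H$, by recognizing the integral against $H(\tau,|y|)$ as controlled by the Hardy--Littlewood maximal function of the selected factor $\alpha_i$ (the integer power $\gamma$ arising from iterating the maximal-function bound as in Chapter 2 of \cite{BCD} and Section 2.2 of \cite{Chemin88}). Then I would induct on $\ell$: assuming \eqref{S6eq7} holds up to $\ell-1$, I apply one more copy of $T_{X\cdot\nabla}$. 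The commutation of $T_{X\cdot\nabla}$ with the frequency-localized convolution structure, combined with the $\delta,k$-regularity controlling $\|X\|^{1-\delta,k-1}_{\delta,X}$, should reproduce an expression of the same form with the multi-index $\lambda$ incremented and an extra factor of $2^{q\delta}$ whenever the derivative is spent on $X$ rather than on an $\alpha_j$; this accounts for the power $2^{q(\ell-|\lambda|)\delta}$, since $\ell-|\lambda|$ exactly counts how many of the $\ell$ derivatives landed on the vector field.

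The main obstacle I anticipate is the careful bookkeeping required to show that after $\ell$ applications, the resulting object is \emph{still} of the form $R_q$ (with a possibly modified kernel $h$ still satisfying a bound of the type \eqref{S6eq6} with a new non-increasing majorant), so that the maximal-function estimate from the base case can be reapplied at each stage. Concretely, each $T_{X\cdot\nabla}$ introduces a paraproduct sum $\sum_j S_{j-1}(X\cdot\nabla)\Delta_j(\cdot)$, and one must verify that the localization of $\alpha_i$ in $B(0,2^q)$ lets us exchange the Littlewood--Paley projections with the convolution kernel at the cost of Bernstein-type factors $2^{q\delta}$ while keeping the selection of the distinguished index $i$ (and hence the $\min_i$) intact. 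Tracking how the integers $\gamma_n(\lambda_i)$ grow under iteration, and confirming that the implicit constant depends only on $\|X\|^{1-\delta,k-1}_{\delta,X}$ and not on $q$, is where the delicacy lies; I expect this to mirror the conormal/striated estimates of \cite{Chemin88, Chemin91} but adapted to the $2^{q\delta}$ scaling dictated by the $\delta$-regularity exponent.
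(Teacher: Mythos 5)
Your skeleton --- base case $\ell=0$ by recognizing the integral against $H(\tau,|y|)$ as a maximal-function average, then induction on $\ell$ with bookkeeping of whether each derivative lands on a factor $\alpha_j$ (raising $\lambda_j$) or is spent on $X$ (costing $2^{q\delta}$) --- is exactly the paper's strategy, and your base case is carried out there in just the way you describe (radial non-increasing majorant, integration in $r$, yielding $M(\alpha_i)(x)\|H(\tau,\cdot)\|_{L^1}$).

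The gap is in the inductive step, at precisely the point you flag as ``the main obstacle'' and then defer to \cite{Chemin88}. The operator $T_{X\cdot\nabla}$ is a paraproduct, not a derivation, so there is no Leibniz rule to distribute it across the factors of $R_q$ (and note that raising $\lambda_j$ means applying $T_{X\cdot\nabla}$ to $\alpha_j$, not $\p_X$ as you wrote); worse, in $T_{X\cdot\nabla}R_q=\sum_{q_1\le q+N_0}2^{q_1}S_{q_1-1}X\cdot\varphi_1(2^{-q_1}D)R_q$ the truncated field $S_{q_1-1}X$ is evaluated at $x$, outside the integral, while each $\alpha_j$ is evaluated at the shifted point $x+f_j(\tau)2^{-q}y$, so even the genuine gradient terms do not reassemble into $R_q(\dots,T_{X\cdot\nabla}\alpha_j,\dots)$. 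The paper closes this with two concrete devices your plan does not supply: (i) telescoping $S_{q_1-1}X=(S_{q_1-1}X-S_{q-1}X)+S_{q-1}X$, the differences being controlled in $L^\infty$ by the striated $C^{1-\delta}$ regularity of $X$ via \eqref{II1} (this produces the terms $R^1_q$, $R^2_q$ and the power $2^{q\delta(\ell+1-|\lambda|)}$); and (ii) for the remaining term $R^3_q$, the first-order Taylor expansion $S_{q_1-1}X(x)=S_{q_1-1}X(x+2^{-q}f_j(t)y)-\int_0^1 2^{-q}f_j(t)\,y\cdot\nabla S_{q_1-1}X(x+\tau 2^{-q}f_j(t)y)\,d\tau$, whose leading part places the vector field at the same shifted point as $\alpha_j$ and hence reconstitutes $R_q(\alpha_1,\dots,T_{X\cdot\nabla}\alpha_j,\dots,\alpha_m)$, while the remainder gains the crucial factor $2^{-q}$ (beating the $2^{q_1}$) and is itself an operator of type \eqref{S6eq5a} with $m+1$ factors (the new factor being $\nabla S_{q_1-1}X$) and kernel $y\,h(t,y)$, still admissible under \eqref{S6eq6}, so the induction hypothesis applies to it. Without (ii) the induction cannot close, because ``still of the form $R_q$'' is exactly what fails for the naive distribution. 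A further structural point you miss: the paper runs this induction jointly with the estimates \eqref{II1}--\eqref{II2} of Lemma \ref{lem:prop} (needed to handle $\varphi_1(2^{-q_1}D)R_q$ and $\nabla S_{q_1-1}X$ inside the step), so the two lemmas are proved simultaneously, not sequentially as your plan implies.
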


With the above lemma,  similar to Theorem 2.2.1 of \cite{Chemin88},
Theorem 3.2.2 of \cite{Chemin91} and Lemma 2.2 of \cite{ZQ}, we have

\begin{lem}\label{lem:prop}
{\sl If  the vector field $X$ is $\delta,k-$regular, let the
striated norms $\|\cdot\|^{\s,\ell}_{\delta,X},
|\cdot|^{s,p,\ell}_{\delta,X}$ and $|\cdot\|^{s,p,\ell}_{\delta,X}$
be given respectively  by  \eqref{S1eq7}, \eqref{S6eq5} and
\eqref{S1eq3}. Then the following statements hold for all $\ell\leq
k$:
\begin{enumerate}[(I)]
\item if $\mbox{supp}\widehat{v}, \mbox{supp}\widehat{w}\subset B(0,2^q),$
then there exist integers $\gamma_n(\la_1),$ so that
   \beq \label{S6eq8} \bigl|(T_{X\cdot\nabla})^\ell(vw) \bigr|
    \lesssim
    \sum_{n,\lambda_1+\lambda_2\leq \ell}
    2^{q(\ell-\lambda_1-\lambda_2) \delta }
      M^{\gamma_n(\la_1)}\bigl( (T_{X\cdot\nabla})^{\lambda_1}v \bigr)
    \|(T_{X\cdot\nabla})^{\lambda_2}w\|_{L^\infty};
    \eeq

\item for any smooth function $\theta$ with compact support in an annulus $\cC,$ one has
  \beq \label{II1}  \begin{split}
   & \|(T_{X\cdot\nabla})^{\ell} \theta(2^{-q}D)w\|_{L^\infty}
    \lesssim
    2^{ q (\ell\delta -\sigma) }
    \| w\|^{\sigma,\ell}_{\delta,X},
 \\
    &\|(T_{X\cdot\nabla})^\ell \theta(2^{-q}D) w\|_{L^\infty}
    \lesssim
    \sum_{0\leq \lambda\leq \ell}
    2^{ q (\ell-\lambda)\delta   }
    \|(T_{X\cdot\nabla})^\lambda w \| _{L^\infty} ;
    \end{split}\eeq
    and there exist integers $\gamma_n(\la)$  so that
    \beq \label{II2}
    \begin{split}
    &\Bigl\| \bigl( 2^{q(s-\ell\delta)}
         (T_{X\cdot\nabla})^\ell \theta(2^{-q}D)w \bigr)_{\ell^2(\N)}
         \Bigr\|_{L^p}\lesssim |w|^{s,p,\ell}_{\delta,X},
       \\
    &\bigl|(T_{X\cdot\nabla})^\ell \theta(2^{-q}D) w\bigr|
    \lesssim
    \sum_{0\leq n,\lambda\leq \ell}
    2^{ q (\ell-\lambda)\delta   }
    M^{\gamma_n(\la)}\bigl( (T_{X\cdot\nabla})^\lambda w \bigr);
    \end{split}\eeq

\item let $\chi(\xi)$ be smooth $S^m$ Fourier-multiplier with $\chi(L\xi)=L^m\chi(\xi)$ for $|\xi|\geq M,$  then
    \beq \|\chi(D)w\|^{\sigma-m,\ell}_{\delta,X}
    \lesssim \| w\|^{\sigma ,\ell}_{\delta,X},
   \andf |\chi(D)w |^{s-m,p,\ell}_{\delta,X}
     \lesssim  | w |^{s,p,\ell}_{\delta,X} ;\label{III2}
    \eeq

\item We also have the  following estimates for the paraproduct and the remainder term  to the Bony's decomposition:
    \beq\label{IV1} \|T_v w\|^{\sigma_{1}+\sigma_2,\ell}_{\delta,X}
    \lesssim
    \|v\|^{\sigma_1,\ell}_{\delta,X}
    \,\|w\|^{\sigma_2,\ell}_{\delta,X}
    \ \mbox{if}\ \sigma_1< 0,
    \ \  \|T_v w\|^{\sigma_{2},\ell}_{\delta,X}
    \lesssim
    \bigl( \|v\|_{L^\infty}+\|v\|^{0,\ell}_{\delta,X}\bigr)
    \,\|w\|^{\sigma_2,\ell}_{\delta,X},
    \eeq
    and
    \beq  \label{IV2} |T_v w |^{s+\sigma,p,\ell}_{\delta,X}
    \lesssim
    \|v\|^{\sigma,\ell}_{\delta,X}
     \,|w |^{s,p,\ell}_{\delta,X}\
    \ \mbox{if}\   \sigma< 0,\ \ \  |T_v w |^{s,p,\ell}_{\delta,X}
    \lesssim
    \bigl( \|v\|_{L^\infty}+\|v\|^{0,\ell}_{\delta,X}\bigr)
    \, |w |^{s,p,\ell}_{\delta,X},
    \eeq
    or for any $\epsilon>0,$
    \beq \label{IV3}
|T_v w |^{ \sigma+s,p,\ell}_{\delta,X}
    \lesssim
     |v |^{s,p,\ell}_{\delta,X}
     \,\|w \|^{\sigma,\ell}_{\delta,X} \ \
   \ \mbox{if}\       \ s< 0,
     \ \  |T_v w |^{\sigma,p,\ell}_{\delta,X}
    \lesssim
    \bigl(  \|v \|_{W^{\epsilon,p}}+ |v |^{0,p,\ell}_{\delta,X}\bigr)
    \, \|w \|^{\sigma,\ell}_{\delta,X}, \eeq
    and for $\sigma_1+\sigma_2>\ell\delta $, one has
    \beq \label{IV4} \|R(v,w)\|^{\sigma_1+\sigma_2,\ell}_{\delta,X}
     \lesssim
     \|v\|^{\sigma_1,\ell}_{\delta,X}
     \|w\|^{\sigma_2,\ell}_{\delta,X},
     \quad |R(v,w) |^{\sigma_1+\sigma_2,p,\ell}_{\delta,X}
     \lesssim
     \|v\|^{\sigma_1,\ell}_{\delta,X}
      |w |^{\sigma_2,p,\ell}_{\delta,X} ;
  \eeq

\item If $\sigma\in ]\ell\delta,1[$ (or $\sigma\in ]\ell\delta -1,1[$ if $\div X=0$),
then one
 \beq \|w\|^{\sigma,\ell}_{\delta,X}
    \sim \sum_{0\leq i\leq \ell}\|\d_X^i \,w\|_{C^{\sigma-i \delta}},
   \andf |w |^{\sigma,p,\ell}_{\delta,X}
    \sim |w \|^{\sigma,p,\ell}_{\delta,X}.
    \label{V2}
    \eeq
\end{enumerate} }
\end{lem}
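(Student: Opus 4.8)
The plan is to establish the five assertions (I)--(V) successively, using Lemma \ref{lem:Rq} as the single engine and, as the two structural tools replacing the $L^\infty$-Besov arguments of \cite{Chemin88, Chemin91}, the Littlewood--Paley square-function characterization of the Sobolev space $W^{s,p}=F^s_{p,2}$ together with the Fefferman--Stein vector-valued maximal inequality (see \cite{BCD}). The key preliminary remark is that every object to which $(T_{X\cdot\nabla})^\ell$ is applied is frequency-localized and can be cast in the convolution form $R_q$ of \eqref{S6eq5a}: a single block $\theta(2^{-q}D)w$ is $R_q(w)$ with $m=1$ (take $h=\widecheck\theta$, $f_1\equiv-1$), while a product $vw$ with $\Supp\widehat v,\Supp\widehat w\subset B(0,2^q)$ is of the same type with $m=2$. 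Feeding these into \eqref{S6eq7} immediately yields the pointwise bounds \eqref{S6eq8} and the second line of \eqref{II2}, whose right-hand sides are sums of $2^{q(\ell-\lambda)\delta}M^{\gamma_n}\bigl((T_{X\cdot\nabla})^\lambda(\cdot)\bigr)$.

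First I would settle (I) and (II). The $L^\infty$ estimates \eqref{II1} follow from the pointwise bounds by taking suprema, using that $M$ is $L^\infty$-bounded and the definition \eqref{S1eq7} of $\|\cdot\|^{\sigma,\ell}_{\delta,X}$. The genuinely new point is the $L^p(\ell^2)$ estimate, the first line of \eqref{II2}: one forms the square function $(\sum_q 2^{2q(s-\ell\delta)}|(T_{X\cdot\nabla})^\ell\theta(2^{-q}D)w|^2)^{1/2}$, dominates it pointwise via the second line of \eqref{II2} by $\sum_{n,\lambda}(\sum_q 2^{2q(s-\lambda\delta)}|M^{\gamma_n}((T_{X\cdot\nabla})^\lambda\Delta_q w)|^2)^{1/2}$, and applies the vector-valued maximal inequality to remove the $M^{\gamma_n}$; what remains is exactly the square-function description of $\|(T_{X\cdot\nabla})^\lambda w\|_{W^{s-\lambda\delta,p}}$, hence of $|w|^{s,p,\ell}_{\delta,X}$. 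Part (III) is then obtained by writing $\chi(D)w=\sum_q\chi(D)\theta(2^{-q}D)w$: for $|\xi|\geq M$ the homogeneity $\chi(L\xi)=L^m\chi(\xi)$ makes $\chi(D)\theta(2^{-q}D)$ a kernel of admissible $R_q$ type carrying an extra factor $2^{qm}$, so (II) applies block-by-block and resums to the shifted index $s\mapsto s-m$, giving \eqref{III2}; the finitely many low-frequency blocks where $\chi$ fails to be homogeneous are smooth and harmless.

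Next I would treat the paraproduct and remainder estimates (IV) by the standard Bony bookkeeping, now carried out in the striated $L^p(\ell^2)$ norms. Applying $(T_{X\cdot\nabla})^\ell$ to $T_vw=\sum_j S_{j-1}v\,\Delta_jw$ and to $R(v,w)=\sum_j\Delta_jv\,\widetilde\Delta_jw$ and expanding each frequency-localized product by \eqref{S6eq8}, one controls the factors $(T_{X\cdot\nabla})^{\lambda_1}S_{j-1}v$ and $(T_{X\cdot\nabla})^{\lambda_2}\Delta_jw$ through (II). The sign conditions ($\sigma_1<0$ for $T_vw$ in \eqref{IV1}--\eqref{IV3}, and $\sigma_1+\sigma_2>\ell\delta$ for $R(v,w)$ in \eqref{IV4}) are exactly what forces the resulting geometric series in $j$ to converge; the $q$-summation is then closed in $L^p(\ell^2)$ for the Sobolev-type norms $|\cdot|^{s,p,\ell}_{\delta,X}$ and in the corresponding $\ell^r$ for the H\"older-type norms $\|\cdot\|^{\sigma,\ell}_{\delta,X}$, so that the two families are handled in parallel.

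The main obstacle is Part (V), the two-sided equivalence \eqref{V2} between the $(T_{X\cdot\nabla})$-based and the $\p_X=X\cdot\nabla$-based norms. The mechanism is Bony's decomposition of the vector-field action, $\p_Xw=\sum_i X^i\p_iw=T_{X\cdot\nabla}w+\sum_i\bigl(T_{\p_iw}X^i+R(X^i,\p_iw)\bigr)$, so that the difference between $\p_X$ and $T_{X\cdot\nabla}$ is a sum of paraproduct and remainder terms controlled by (IV) using the $\delta,k$-regularity \eqref{S6eq4} of $X$. Iterating this identity to pass between $\p_X^\ell$ and $(T_{X\cdot\nabla})^\ell$, one must keep every commutator remainder summable, and this is precisely where the constraint $\sigma\in\,]\ell\delta,1[$ enters, the remainder $R(X,\nabla w)$ requiring a lower bound of the form $\sigma>\ell\delta$ for convergence. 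The delicate point is the divergence-free refinement $\sigma\in\,]\ell\delta-1,1[$: here one rewrites $\sum_i R(X^i,\p_iw)=\sum_i\p_iR(X^i,w)$, legitimate because $\sum_iR(\p_iX^i,w)=R(\div X,w)=0$, which exhibits the remainder as a full derivative of an object one index more regular and thereby relaxes the lower bound on $\sigma$ by one. Tracking this gain consistently across all $\ell$ levels, while simultaneously controlling the iterated maximal-function compositions $M^{\gamma_n}$ uniformly in $q$, is the heart of the argument and the step demanding the most care in the Triebel--Lizorkin setting, where the $L^p(\ell^2)$ norm forbids the termwise $L^\infty$ handling available in Chemin's original Besov framework.
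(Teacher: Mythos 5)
Your parts (I), (III), (IV) and (V) follow essentially the paper's own route: \eqref{S6eq8} read off from Lemma \ref{lem:Rq}, the homogeneity of $\chi$ reducing (III) to (II), Bony bookkeeping for (IV), and for (V) the identity $\d_X w-T_{X\cdot\nabla}w=T_{\nabla w}\cdot X+\div R(X,w)-R(\div X,w)$ with the divergence-free cancellation relaxing the lower bound on $\sigma$. The gap is in part (II), which is the engine feeding all the other parts.

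Your derivation of the $L^p(\ell^2)$ estimate (first line of \eqref{II2}) is circular. Applying Lemma \ref{lem:Rq} to $\theta(2^{-q}D)w=R_q\bigl(\tilde\theta(2^{-q}D)w\bigr)$ yields a pointwise bound by terms $2^{q(\ell-\lambda)\delta}M^{\gamma_n}\bigl((T_{X\cdot\nabla})^{\lambda}\tilde\theta(2^{-q}D)w\bigr)$, i.e.\ with the dyadic cutoff \emph{inside} the iterated vector field. After Fefferman--Stein you are left with $\bigl\|\bigl(2^{q(s-\lambda\delta)}(T_{X\cdot\nabla})^{\lambda}\tilde\theta(2^{-q}D)w\bigr)_{q}\bigr\|_{L^p(\ell^2)}$, which you then identify with the "square-function description" of $\|(T_{X\cdot\nabla})^{\lambda}w\|_{W^{s-\lambda\delta,p}}$. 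But that norm is $\bigl\|\bigl(2^{q(s-\lambda\delta)}\Delta_q(T_{X\cdot\nabla})^{\lambda}w\bigr)_{q}\bigr\|_{L^p(\ell^2)}$, with the block \emph{outside}: exchanging $\Delta_q$ with $(T_{X\cdot\nabla})^{\lambda}$, at the price of lower-order terms carrying the gain $2^{q(\ell-\lambda)\delta}$ rather than the trivial $2^{q(\ell-\lambda)}\|X\|_{L^\infty}$, is precisely the content of \eqref{II2}, so you have assumed what is to be proved. The same regress occurs pointwise: the top-order term $\lambda=\ell$ produced by Lemma \ref{lem:Rq} is $M^{\gamma}\bigl((T_{X\cdot\nabla})^{\ell}\tilde\theta(2^{-q}D)w\bigr)$, the very quantity being estimated at a fattened cutoff, so iterating the lemma never removes the cutoff and never reaches $(T_{X\cdot\nabla})^{\lambda}w$ itself.

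What is missing is the commutator mechanism. The paper proves \eqref{II2} by induction on $\ell$ via the splitting $(T_{X\cdot\nabla})^{\ell+1}\theta(2^{-q}D)w=(T_{X\cdot\nabla})^{\ell}[T_{X\cdot\nabla};\theta(2^{-q}D)]w+(T_{X\cdot\nabla})^{\ell}\theta(2^{-q}D)(T_{X\cdot\nabla}w)$: the second term is handled by the induction hypothesis applied to $T_{X\cdot\nabla}w$ (this is what legitimately moves one power of the operator past the cutoff), while the commutator is decomposed into three pieces $T^1_q,T^2_q,T^3_q$ whose bounds rest on (i) the smallness $\|(T_{X\cdot\nabla})^{\lambda_1}(S_{q_1-1}X-S_{q-1}X)\|_{L^\infty}\lesssim 2^{-q_1(1-(1+\lambda_1)\delta)}\|X\|^{1-\delta,k-1}_{\delta,X}$ for $|q_1-q|$ bounded, and (ii) a first-order Taylor expansion of the kernel of $[S_{q-1}X;\theta(2^{-q}D)]$ producing the factor $2^{-q}\nabla S_{q-1}X=O(2^{-q(1-\delta)})$. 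These two gains convert $2^{q}\|X\|_{L^\infty}$ into $2^{q\delta}\|X\|^{1-\delta,k-1}_{\delta,X}$, which your argument never produces. Note also that in the paper Lemma \ref{lem:Rq} and \eqref{II2} are established by a \emph{joint} induction (the proof of Lemma \ref{lem:Rq} at order $\ell+1$ invokes \eqref{II2} at orders $\leq\ell$), so Lemma \ref{lem:Rq} cannot serve as a stand-alone black box from which \eqref{II2} "immediately" follows. Once (II) is proved by this commutator induction, the remainder of your outline goes through.
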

The proofs of Lemma \ref{lem:Rq} and Lemma \ref{lem:prop} will be
postponed to the Appendix \ref{appb}. We mention that the main
difference between the proof here and that in \cite{Chemin88,
Chemin91, ZQ} is that the Sobolev space $W^{s,p}$ is of
Triebel-Lizorkin type space, which is not of Besov type spaces as in
the previous papers.

We now present the proof of \eqref{initial:v0}.

\begin{proof}[Proof of \eqref{initial:v0}] It is easy to observe
from \eqref{initial:v0} and \eqref{V2} that it suffices to prove
\beq\label{S6eq18} |\p_{X_0}v_0|^{1-\e/{k},p,k-1}_{\e/{k},X_0}\leq
C\bigl(\|\om_0\|_{L^1\cap L^p}+|\om_0|^{0,p,k}_{\e/{k},X_0}\bigr).
\eeq As a matter of fact, in view of \eqref{initial:X0} and $p>2$,
we have
\begin{equation*}
X_0\in   C^{1-\delta}_{\delta}(X_0, k-1),\quad \mbox{for any}\
\delta \in ]0,1[ \with  \delta k<1.
\end{equation*}
In particular, this gives \beq\label{S6eq19} X_0\in
C^{1-\varepsilon/k}_{\varepsilon/k}(X_0,k-1) \quad \mbox{for
any}\quad \varepsilon\in ]0,1[. \eeq While for any smooth cut-off
function $\zeta(\xi)= \left\{\begin{array}{l} \displaystyle
1, \quad |\xi|\leq 1,\\
\displaystyle 0,\quad |\xi|\geq 2, \end{array}\right.$, we deduce
from \eqref{initial:X0} and \eqref{S6eq0}  that \beq\label{S6eq20}
|\p_{X_0}\zeta(D)v_0|^{1-\e/{k},p,k-1}_{\e/{k},X_0}\leq
C\|\om_0\|_{L^1\cap L^p}. \eeq  So it remains to estimate
$\p_{X_0}(Id-\zeta(D))\na^\perp\D^{-1}\om_0.$ Indeed
 applying Bony's decomposition gives
$$\longformule{
\p_{X_0}(Id-\zeta(D))\na^\perp\D^{-1}\om_0=T_{X_0\cdot\na}(Id-\zeta(D))\na^\perp\D^{-1}\om_0}{{}+
T_{\na(Id-\zeta(D))\na^\perp\D^{-1}\om_0}\cdot
X_0+R(X_0,\na(Id-\zeta(D))\na^\perp\D^{-1}\om_0).} $$ Applying
\eqref{III2} gives \beno
\bigl|T_{X_0\cdot\na}(Id-\zeta(D))\na^\perp\D^{-1}\om_0\bigr|^{1-\e/{k},p,k-1}_{\e/{k},X_0}\leq
\bigl|(Id-\zeta(D))\na^\perp\D^{-1}\om_0\bigr|^{1,p,k}_{\e/{k},X_0}\leq
C|\om_0|^{0,p,k}_{\e/{k},X_0}. \eeno Applying \eqref{IV3} and the
property \eqref{S6eq19}   with $0<\varepsilon'<\varepsilon$ yields
\begin{align*}
\bigl|T_{\na(Id-\chi(D))\na^\perp\D^{-1}\om_0}\cdot
X_0\bigr|^{1-\e/{k},p,k-1}_{\e/{k},X_0} &\leq
C|\om_0|^{(\varepsilon'-\varepsilon)/k,p,k-1}_{\e/{k},X_0}
\|X_0\|^{1-\varepsilon'/{k},k-1}_{\e/{k},X_0}
\\
&\leq C|\om_0|^{0,p,k-1}_{\e/{k},X_0}
 \|X_0\|^{1-\varepsilon'/{k},k-1}_{\varepsilon'/{k},X_0}.
\end{align*}  Finally applying \eqref{IV4} and then using \eqref{III2} leads
to \beno\begin{split} \bigl|
R(X_0,&\na(Id-\zeta(D))\na^\perp\D^{-1}\om_0)\bigr|^{1-\e/{k},p,k-1}_{\e/{k},X_0}\\
\leq
&
C\bigl|\na(Id-\zeta(D))\na^\perp\D^{-1}\om_0\bigr|^{0,p,k-1}_{\e/{k},X_0}\|X_0\|^{1-\e/{k},k-1}_{\e/{k},X_0}\\
\leq& C
|\om_0|^{0,p,k}_{\e/{k},X_0}\|X_0\|^{1-\e/{k},k-1}_{\e/{k},X_0}.
\end{split}
\eeno This in turn shows that \beno
\bigl|\p_{X_0}(Id-\zeta(D))\na^\perp\D^{-1}\om_0
\bigr|^{1-\e/{k},p,k-1}_{\e/{k},X_0}\leq C
|\om_0|^{0,p,k}_{\e/{k},X_0}\|X_0\|^{1-\e/{k},k-1}_{\e/{k},X_0},
\eeno which together with \eqref{S6eq20} gives rise to
\eqref{S6eq18}. \end{proof}

\appendix

\setcounter{equation}{0}
\section{The proofs of Lemma \ref{lem:Rq} and Lemma
\ref{lem:prop}}\label{appb}

In this appendix we shall present  the proofs of Lemma \ref{lem:Rq}
and Lemma \ref{lem:prop}.

\begin{proof}[Proof of Lemma \ref{lem:Rq}]
We first prove \eqref{S6eq7} for $\ell=0.$ In this case, according
to \eqref{S6eq5a} and \eqref{S6eq6}, for any $i\in \{1,\cdots m\
\},$ we have \beq\label{Apeq13} |R_q(\alpha_1,\cdots,\alpha_m)|\leq
\prod_{j=1,j\neq
i}^m\|\al_j\|_{L^\infty}\int_{[0,1]^m}\int_{\R^2}|\al_i(x
+f_i(\tau)2^{-q}y)|H(\tau,|y|)\,dy\,d\tau. \eeq Yet similar to the
proof of Proposition 1.16 of \cite{BCD}, for any $\tau\in ]0,1[,$
one has \beno
\begin{split}
\int_{\R^2}&|\al_i(x+f_i(\tau)2^{-q}y)|H(\tau,|y|)\,dy\\
=&\Bigl(\f{2^{q}}{f_i(\tau)}\Bigr)^2\int_0^\infty
H\Bigl(\tau,\f{2^q r}{f_i(\tau)}\Bigr)\int_{|y|=r}|\al_i(x-y)|\,d\s_r(y)\,dr\\
=&-\Bigl(\f{2^{q}}{f_i(\tau)}\Bigr)^3\int_0^\infty \p_r
H\Bigl(\tau,\f{2^q
r}{f_i(\tau)}\Bigr)\int_{B(x,r)}|\al_i(y)|\,dy\,dr\\
\leq &-\pi\Bigl(\f{2^{q}}{f_i(\tau)}\Bigr)^3
M(\al_i)(x)\int_0^\infty
\p_r H\Bigl(\tau,\f{2^q r}{f_i(\tau)}\Bigr)r^2\,dr\\
=&M(\al_i)(x)\|H(\tau,\cdot)\|_{L^1}.
\end{split}
\eeno Substituting the above estimate into \eqref{Apeq13} shows that
\eqref{II1} holds for $\ell=0.$

Inductively, we assume that \eqref{S6eq7}, which implies
\eqref{S6eq8},  and \eqref{II2}, which we shall prove late,  hold
for $\ell\leq k-1,$ we are going to prove that \eqref{S6eq7} holds
for $\ell+1.$ Note for $ \varphi_1(\xi)=i\xi\varphi(\xi),$ one has
$\sum_{q_1\in\Z}2^{q_1}\varphi_1(2^{-q_1}D)=\na,$ which gives (see
also the Appendix of \cite{Chemin88})
\begin{align*}
T_{X\cdot\nabla}&R_q(\alpha_1,\cdots,\alpha_m)\\
 &=\sum_{q_1\leq
q+N_0}2^{ q_1} S_{q_1-1}X\cdot  \varphi_1(2^{-q_1}D)
R_q(\alpha_1,\cdots,\alpha_m)
\\
&=\sum_{q_1\leq q+N_0}2^{ q_1} \bigl( S_{q_1-1}X- S_{q-1}X \bigr)
\cdot  \varphi_1(2^{-q_1}D) R_q(\alpha_1,\cdots,\alpha_m)
\\
&\quad +\sum_{q_1\leq q+N_0} 2^{ q_1} \bigl(
S_{q-1}X-S_{q_1-1}X\bigr) \cdot\sum_{j=1}^m
 R_q(\alpha_1,\cdots, \varphi_1(2^{-q_1}D)\alpha_j,\cdots,\alpha_m)
 \\
 &\quad
 +\sum_{q_1\leq q+N_0}
 S_{q_1-1}X \cdot \sum_{j=1}^m
 R_q(\alpha_1,\cdots,\nabla\varphi(2^{-q_1}D)\alpha_j,\cdots,\alpha_m)
 \\
 &\eqdefa R^1_q+R^2_q+R^3_q.
\end{align*}
It follows from the inductive assumption, \eqref{S6eq8}, that
\begin{align*}
| (T_{X\cdot\nabla})^\ell R^1_q| \lesssim & \sum_{\substack{q_1\leq
q+N_0\\ n_1,\ell_1+\ell_2\leq \ell}} 2^{q_1}
2^{q\delta(\ell-\ell_1-\ell_2)} \|(T_{X\cdot\nabla})^{\ell_1}
(S_{q_1-1}X-S_{q-1}X)\|_{L^\infty}
\\
&\qquad\qquad\qquad\qquad \times M^{\gamma_{n_1}(\ell_2)}\bigl(
(T_{X\cdot\nabla})^{\ell_2} \varphi_1(2^{-q_1}D)
R_q(\alpha_1,\cdots,\alpha_m) \bigr).
\end{align*}
For $q_1< q $,  applying  (\ref{II1})  and using assumption that
$1-k\delta>0$ yields
\begin{align*}
\|(T_{X\cdot\nabla})^{\ell_1} (S_{q_1-1}X-S_{q-1}X)\|_{L^\infty}
\lesssim& \sum_{q_2=q_1-1}^{q-2} \|(T_{X\cdot\nabla})^{\ell_1}
\Delta_{q_2}X\|_{L^\infty}
\\
\lesssim& \sum_{q_2=q_1-1}^{q-2} 2^{-q_2(1-(1+\ell_1)\delta)}
\|X\|^{1-\delta,k-1}_{\delta,X}\\
 \lesssim &
2^{-q_1(1-(1+\ell_1)\delta)}\|X\|^{1-\delta,k-1}_{\delta,X}.
\end{align*}
Along the same line for $q_1\in [q,q+N_0]$, there holds the same
estimate for $\|(T_{X\cdot\nabla})^{\ell_1}
(S_{q_1-1}X-S_{q-1}X)\|_{L^\infty}.$

Whereas applying \eqref{II2} and then  the inductive assumption
gives
\begin{align*}
&\Bigl| (T_{X\cdot\nabla})^{\ell_2}
 \varphi_1(2^{-q_1}D)R_q(\alpha_1,\cdots,\alpha_m) \Bigr|
\\
&\lesssim \sum_{0\leq  n_2,\ell_3\leq
\ell_2}2^{q_1\delta(\ell_2-\ell_3)} M^{\gamma_{n_2}(\ell_3)}\bigl(
(T_{X\cdot\nabla})^{\ell_3} R_q(\alpha_1,\cdots,\alpha_m)\bigr)
\\
&\lesssim \sum_{\substack{|\lambda|\leq \ell_3\leq \ell_2\\ n_3\leq
\ell_3}} 2^{q(\ell_2-|\lambda|) \delta } \min_{i=1,\cdots,m} \Bigl\{
  M^{\gamma_{n_2}(\ell_3)+\gamma_{n_3}(\la_i)}\bigl( (T_{X\cdot\nabla})^{\lambda_i}\alpha_i \bigr)
\prod_{j=1,j\neq i}^m
\|(T_{X\cdot\nabla})^{\lambda_j}\alpha_j\|_{L^\infty} \Bigr\}.
\end{align*}
Hence we obtain
\begin{align}\label{R1q}
| (T_{X\cdot\nabla})^\ell R^1_q | \lesssim \sum_{n,|\lambda|\leq
\ell } 2^{q\delta(\ell+1-|\lambda|)} \min_{i=1,\cdots,m} \Bigl\{
  M^{\gamma_n(\la_i)}\bigl( (T_{X\cdot\nabla})^{\lambda_i}\alpha_i \bigr)
\prod_{j\neq i}
\|(T_{X\cdot\nabla})^{\lambda_j}\alpha_j\|_{L^\infty} \Bigr\}.
\end{align}
The same argument leads to the same estimate  for $R^2_q$.

Finally  noticing that
$$
S_{q_1-1}X(x) =S_{q_1-1}X(x+2^{-q}f_j(t)y) -\int^1_0
2^{-q}f_j(t)y\cdot \nabla S_{q_1-1} X (x+\tau 2^{-q}f_j(t)
y)\,d\tau,
$$
we write \beq\label{R3q}
\begin{split}
 R^3_q=&\sum_{j=1}^m
 R_q(\alpha_1,\cdots,T_{X\cdot\nabla}\alpha_j,\cdots,\alpha_m)
 \\
& -\sum_{q_1\leq q+N_0} 2^{{q_1-q}}
 \sum_{j=1}^m
 R_q^{(j)}(\nabla S_{q_1-1}X,
 \alpha_1,\cdots, \varphi_1(2^{-q_1}D)\alpha_j,\cdots,\alpha_m),
\end{split} \eeq
where  $R_q^{(j)}$ is given by \beno
\begin{split}
 &R_q^{(j)}(\nabla S_{q_1-1}X,
 \alpha_1,\cdots,
 \varphi_1(2^{-q_1}D)\alpha_j,\cdots,\alpha_m)\\
 &=\int_0^1\int_{[0,1]^m}\int_{\R^d}\na
 S_{q-1}X(x+\tau 2^{-q}f_j(t)y)\\
 &\quad\times \varphi_1(2^{-q}D)\al_j(x+2^{-q}f_j(t)y)\Prod_{i\neq
 j}\al_i(x+2^{-q}f_i(t)y)\cdot y f_j(t) h(t,y)
\,dy\,dt\,d\tau.
\end{split}\eeno
Using the inductive assumption and taking into account the fact that
$$
\|(T_{X\cdot\nabla})^{\mu}\nabla S_{q_1-1}X\|_{L^\infty} \lesssim
\sum_{q_2\leq q_1-2} 2^{q_2\delta(1+\mu)} \|\nabla X\|^{
-\delta,\mu}_{\delta,X} \lesssim
2^{q_1\delta(1+\mu)}\|X\|^{1-\delta,k-1}_{\delta,X},
\quad\mu\leq\ell\leq k-1,
$$
we write \beno
\begin{split}
\bigl|&(T_{X\cdot\na})^\ell R_q^{(j)}(\nabla S_{q_1-1}X,
 \alpha_1,\cdots,
 \varphi_1(2^{-q_1}D)\alpha_j,\cdots,\alpha_m)\bigl|\\
 &\lesssim \sum_{n,\mu+|\la|\leq
 \ell}2^{q(\ell-\mu-|\la|)\delta}\|(T_{X\cdot\na})^{\mu}\nabla
 S_{q_1-1}X\|_{L^\infty}\\
 &\qquad\qquad\qquad\times\min_{i=1,\cdots,m}\Bigl\{M^{\gamma_n(\la_i)}\bigl( (T_{X\cdot\nabla})^{\lambda_i}\bar{\alpha}_i \bigr)
\prod_{j=1,j\neq i}^m
\|(T_{X\cdot\nabla})^{\lambda_j}\bar{\alpha}_j\|_{L^\infty}
\Bigr\}\\
&\lesssim \sum_{n,|\la|\leq
 \ell}2^{q(\ell+1-|\lambda|)\delta}\|X\|^{1-\delta,k-1}_{\delta,X}\min_{i=1,\cdots,m}\Bigl\{M^{\gamma_n(\la_i)}\bigl( (T_{X\cdot\nabla})^{\lambda_i}{\alpha}_i \bigr)
\prod_{j=1,j\neq i}^m
\|(T_{X\cdot\nabla})^{\lambda_j}{\alpha}_j\|_{L^\infty}\Bigr\}
\end{split}
\eeno where $\bar{\alpha}_i= \left\{\begin{array}{l} \displaystyle
\alpha_i, \qquad\qquad\quad\ \  i\neq j,\\
\displaystyle \varphi_1(2^{-q_1}D)\alpha_j ,\quad i=j,
\end{array}\right.$ and we used \eqref{II1} and \eqref{II2} in the
last step. This together with \eqref{R3q} and the inductive
assumption ensures that
 that $R^3_q$  shares the same estimate as $R^1_q$ in \eqref{R1q}. This
completes the proof of Lemma \ref{lem:Rq}.
\end{proof}

Next let us turn to the proof of  Lemma \ref{lem:prop}.

\begin{proof}[Proof of Lemma \ref{lem:prop}]
The statements concerning the norm
$\|\cdot\|^{\sigma,\ell}_{\delta,X}$ have been proved in Section 2.2
of \cite{Chemin88}, Theorem 3.2.2 of \cite{Chemin91} and Lemma 2.2
of \cite{ZQ}. We just need to prove the  estimates for the norm
$|\cdot|^{\sigma,p,\ell}_{\delta,X}$. Firstly the Estimate
\eqref{S6eq8} follows directly from \eqref{S6eq7}.

 \no$\bullet$ \underline{Proof of (\ref{II2})}

 In view of Proposition 1.16 and Remark 1.17 of \cite{BCD}, \eqref{II2} holds for $\ell=0$. Inductively, let us assume  that \eqref{II2}
holds for $\ell\leq k-1,$ we are going to prove this estimate for
$\ell+1.$ Notice that
\begin{align*}
(T_{X\cdot\nabla})^{\ell+1}\theta(2^{-q}D)(w)
=(T_{X\cdot\nabla})^\ell[T_{X\cdot\nabla};\theta(2^{-q}D)]w
+(T_{X\cdot\nabla})^\ell \theta(2^{-q}D)(T_{X\cdot\nabla}w).
\end{align*}
It is easy to observe from the spectral properties to terms of
Bony's decomposition that there exist a smooth function
$\tilde{\th}$ with compact support in some annulus of $\R^2$ and a
fixed integer $N_\th$ so that
\begin{align*}
[T_{X\cdot\nabla};\theta(2^{-q}D)]w &=\sum_{|q_1-q|\leq N_\theta}
[S_{q_1-1}X; \theta(2^{-q}D)] \varphi(2^{-q_1}D) \na\tilde
\theta(2^{-q}D) w
\\
=&2^q\sum_{|q_1-q|\leq N_\theta}
 \bigl(S_{q_1-1}X-S_{q-1}X\bigr)
\cdot\theta(2^{-q}D)\varphi(2^{-q_1}D)\tilde\th_1(2^{-q} D) w
\\
& +2^q[S_{q-1}X;\theta(2^{-q}D)] \tilde\th_1(2^{-q} D) w
\\
&-2^q\sum_{|q_1-q|\leq N_\theta}\theta(2^{-q}D)
\bigl(\bigl(S_{q_1-1}X-S_{q-1}X\bigr) \cdot
\varphi(2^{-q_1}D)\tilde\th_1(2^{-q} D) w\bigr)
\\ \eqdefa
&T^1_q+T^2_q+T^3_q,
\end{align*}
where $\tilde\th_1(\xi)\eqdefa i\xi\tilde\th(\xi).$

It follows form (\ref{S6eq8}) and (\ref{II1})  that
\begin{align*}
|(T_{X\cdot\nabla})^\ell T^1_q| &\lesssim
\sum_{n_1,\lambda_1+\lambda_2\leq
\ell}2^{q\delta(\ell-\lambda_1-\lambda_2)}
 2^q\sum_{|q_1-q|\leq N_\theta}
\|(T_{X\cdot\nabla})^{\lambda_1} (S_{q_1-1}X-S_{q-1}X)\|_{L^\infty}
\\
&\qquad\qquad\qquad\qquad\qquad\qquad \times
M^{\gamma_{n_1}(\la_2)}\bigl( (T_{X\cdot\nabla})^{\lambda_2}
  \theta(2^{-q}D)\varphi(2^{-q_1}D)\tilde\th_1(2^{-q}D) w \bigr)
  \\
  &\lesssim
\sum_{n_1,\lambda_1+\lambda_2\leq
\ell}2^{q\delta(\ell-\lambda_1-\lambda_2)} \,2^q\, 2^{q(\lambda_1
\delta-(1-\delta))} \|X\|^{1-\delta,k-1}_{\delta,X}
\\
&\qquad\qquad\qquad\qquad\qquad\qquad \times
  M^{\gamma_{n_1}(\la_2)}\bigl( (T_{X\cdot\nabla})^{\lambda_2}
  \theta(2^{-q}D)\varphi(2^{-q_1}D)\tilde\th_1(2^{-q}D) w \bigr),
\end{align*}
which together with the inductive assumption implies
\beq\label{T1q2} \bigl| (T_{X\cdot\nabla})^\ell T^1_q \bigr|
  \lesssim
\sum_{n,\lambda\leq \ell} 2^{q\delta(\ell+1-\lambda)}
  M^{\gamma_n(\la)}\bigl( (T_{X\cdot\nabla})^{\lambda}
  w \bigr).\eeq
Furthermore, there holds \beq\label{T1q}
\begin{split}
\Bigl\|\bigl(& 2^{q(s-(\ell+1)\delta)} (T_{X\cdot\nabla})^\ell T^1_q
\bigr)_{\ell^2(\N)} \Bigr\|_{L^p}\\
 &\lesssim \sum_{\lambda\leq
\ell} \Bigl\| \bigl( 2^{q(s-\lambda\delta)}
  M^{\gamma(\la)}
  \bigl( (T_{X\cdot\nabla})^{\lambda}
  \theta(2^{-q}D)  w\bigr) \bigr)_{\ell^2(\N)}
\Bigr\|_{L^p}
\\
&\lesssim \sum_{\lambda\leq \ell} \Bigl\| \bigl(
2^{q(s-\lambda\delta)}
  (T_{X\cdot\nabla})^{\lambda}
  \theta(2^{-q}D)  w \bigr)_{\ell^2(\N)}
\Bigr\|_{L^p} \lesssim |w|^{s,p,\ell}_{\delta,X}.
\end{split}\eeq

Whereas by applying  (\ref{II2}) for $\la\leq \ell$ and
\eqref{S6eq8}, one has
\begin{align*}
|(T_{X\cdot\nabla})^\ell T^3_q| &\lesssim
 2^q\sum_{\substack{|q_1-q|\leq N_\theta\\ n_1,
\la\leq \ell}} 2^{q\delta(\ell-\la)} M^{\gamma_{n_1}(\la)}\bigl(
(T_{X\cdot\nabla})^{\la} \bigl( (S_{q_1-1}X-S_{q-1}X)
\varphi(2^{-q_1}D)\tilde\th_1(2^{-q}D) w\bigr) \bigr)
  \\
  &\lesssim
\sum_{\substack{|q_1-q|\leq N_\theta, n_1\leq \ell\\
n_2,\lambda_1+\lambda_2\leq\la\leq
\ell}}2^{q\delta(\ell-\lambda_1-\lambda_2)} \,2^q\,
\|(T_{X\cdot\nabla})^{\lambda_1}(S_{q_1-1}X-S_{q-1}X)\|_{L^\infty}
\\
&\qquad\qquad\qquad\qquad\qquad\qquad\times
M^{\gamma_{n_1}(\la)+\gamma_{n_2}(\la_2)}\bigl(
(T_{X\cdot\nabla})^{\lambda_2}
  \varphi(2^{-q_1}D)\tilde\th_1(2^{-q}D) w \bigr).
\end{align*}
Then the  Estimates \eqref{T1q2}-\eqref{T1q}  also hold  for
$T^3_q$.

To deal with $T^2_q,$ let us denote $h_\th\eqdefa\cF^{-1}\th,$ then
one has
\begin{align*}
T^2_q &= 2^{3q} \int_{\R^2}h_\theta(2^{q}(x-y)) \bigl( S_{q
-1}X(x)-S_{q -1}X(y)\bigr) \cdot \tilde\th_1(2^{-q}D) w(y)\,dy
\\
&=  \int_{\R^2}\int^1_0
 h_\theta(z)
z\cdot\nabla S_{q -1}X(x+(1-t)2^{-q}z) \cdot\tilde\th_1(2^{-q}D)
w(x-2^{-q}z)\,dt\,dz,
\end{align*}
so that by applying \eqref{S6eq7}, we obtain
\begin{align*}
|(T_{X\cdot\nabla})^\ell T^2_q| \lesssim
\sum_{n,\lambda_1+\lambda_2\leq
\ell}2^{q\delta(\ell-\lambda_1-\lambda_2)}
\|(T_{X\cdot\nabla})^{\lambda_1}(\nabla S_{q-1} X)\|_{L^\infty}
M^{\gamma_n(\la_2)}\bigl((T_{X\cdot\nabla})^{\lambda_2}
\tilde\th_1(2^{-q}D) w\bigr).
\end{align*}
Yet applying \eqref{II1} yields
$$
\|(T_{X\cdot\nabla})^{\lambda_1}(\nabla S_{q-1} X)\|_{L^\infty}
\lesssim 2^{ q\delta(1+\lambda_1)} \|X\|^{1-\delta,k-1}_{\delta,X},
$$
from which and  the inductive assumption, we conclude that the
Estimates \eqref{T1q2}-\eqref{T1q} hold for $T^2_q$ as well.

 Finally note that
$$
|T_{X\cdot\nabla} w|^{s-\delta,p, \ell}_{\delta,X} \lesssim
|w|^{s,p,\ell+1}_{\delta,X},
$$
we deduce from the inductive assumption that \beno
\begin{split}
&\bigl|(T_{X\cdot\nabla})^\ell
\theta(2^{-q}D)(T_{X\cdot\nabla}w)\bigr|\lesssim
\sum_{n,\la\leq \ell}2^{q\bigl((\ell+1)-(\la+1)\bigr)}M^{\gamma_n(\la+1)}\bigl((T_{X\cdot\na})^{\la+1}w\bigr)\andf\\
&\Bigl\|\Bigl(
2^{q\bigl((s-\delta)-\ell\delta\bigr)}(T_{X\cdot\nabla})^\ell
\theta(2^{-q}D)(T_{X\cdot\nabla}w) \Bigr)_{\ell^2(\N)}
\Bigr\|_{L^p}\lesssim |w|^{s,p,\ell+1}_{\delta,X}.
\end{split}
\eeno This proves (\ref{II2}) holds for $\ell+1$.

 \no$\bullet$ \underline{Proof of (\ref{III2})} Without loss of
 generality, we may assume that $\mbox{supp}\chi\subset \{x:\ |\xi |\geq M\
 \}.$ Then
it is easy to observe that there exists a smooth function
$\varphi_\chi$ with compact support in some annulus of $\R^2$ such
that for $q\geq 0$
\begin{align*}
  \Delta_q (T_{X\cdot\nabla})^\ell \chi(D)w
&=  2^{qm}\Delta_q (T_{X\cdot\nabla})^\ell \varphi_\chi(2^{-q}D) w.
\end{align*}
Then we infer
\begin{align*}
\|(T_{X\cdot\nabla})^\ell \chi(D)w\|_{W^{s-m-\ell\delta,p}}
&=\Bigl\| \bigl( 2^{q(s-m-\ell\delta)} \Delta_q
(T_{X\cdot\nabla})^\ell \chi(D)w \bigr)_{\ell^2(\N)} \Bigr\|_{L^p}
\\
&= \Bigl\| \bigl( 2^{q(s -\ell\delta)} \Delta_q
(T_{X\cdot\nabla})^\ell \varphi_\chi(2^{-q}D) w \bigr)_{\ell^2(\N)}
\Bigr\|_{L^p}
\\
&\lesssim \Bigl\| \bigl( 2^{q(s -\ell\delta)} M \bigl(
(T_{X\cdot\nabla})^\ell \varphi_\chi(2^{-q}D) w
\bigr)\bigr)_{\ell^2(\N)} \Bigr\|_{L^p}
\\
&\lesssim \Bigl\| \bigl( 2^{q(s -\ell\delta)}
 (T_{X\cdot\nabla})^\ell
\varphi_\chi(2^{-q}D) w \bigr)_{\ell^2(\N)} \Bigr\|_{L^p}
\\
&\lesssim |w|^{s,p,\ell}_{\delta,X}, \hbox{ by  (\ref{II2})}.
\end{align*}
And (\ref{III2}) follows.

 \no$\bullet$ \underline{Proof of the para-product estimates.}
 Due to the support properties to the Fourier transform of the terms in the  para-product decomposition, for any
  integer $\ell,$ there exists a positive integer $N_\ell$ so that
 \begin{align*}
 \Delta_q (T_{X\cdot\nabla})^\ell T_v w
 =\Delta_q \sum_{|q_1-q|\leq N_\ell} (T_{X\cdot\nabla})^\ell (S_{q_1-1}v \Delta_{q_1}w).
 \end{align*}
Applying \eqref{S6eq8} gives
 \begin{align*}
 \Bigl| \Delta_q (T_{X\cdot\nabla})^\ell T_v w \Bigr|
 &\lesssim
 \sum_{|q_1-q|\leq N}
  M\bigl( (T_{X\cdot\nabla})^\ell (S_{q_1-1}v \Delta_{q_1}w) \bigr)
  \\
  &\lesssim
 \sum_{\substack{|q_1-q|\leq N\\ n,\lambda_1+\lambda_2\leq \ell}}
  2^{q_1\delta(\ell-\lambda_1-\lambda_2)}
  \|(T_{X\cdot\nabla})^{\lambda_1} S_{q_1-1}v\|_{L^\infty}
  M^{\gamma_n(\la_2)+1}\bigl( (T_{X\cdot\nabla})^{\lambda_2}\Delta_{q_1}w \bigr)  .
 \end{align*}
 Yet it follows from \eqref{II1} that
 \begin{equation*}
 \|(T_{X\cdot\nabla})^{\lambda_1} S_{q_1-1}v\|_{L^\infty}
 \lesssim\left\{\begin{array}{l}
  2^{q_1(-\sigma+\lambda_1\delta)}\|v\|^{\sigma,\ell}_{\delta,X}\
 \hbox{ if }\ \sigma<0,
 \\
 \|v\|_{L^\infty}\
 \hbox{ if }\lambda_1=0,
 \\
 2^{q_1\lambda_1\delta}\|v\|^{0,k}_{\delta,X}\
 \hbox{ if }\lambda_1>0.
 \end{array}\right.
 \end{equation*}
Therefore, we get, by applying \eqref{II2} that  for $\sigma<0$
\begin{align*}
\|(T_{X\cdot\nabla})^\ell T_v w\|_{W^{s+\sigma-\ell\delta,p}}
&=\Bigl\| \bigl( 2^{q(s+\sigma-\ell\delta)} \Delta_q
(T_{X\cdot\nabla})^\ell T_v w \bigr)_{\ell^2(\N)} \Bigr\|_{L^p}
\\
&\lesssim \|v\|^{\sigma,\ell}_{\delta,X} \sum_{\la_2\leq\ell}\Bigl\|
\bigl( 2^{q(s-\la_2\delta)}
(T_{X\cdot\nabla})^{\lambda_2}\Delta_{q_1}w \bigr)_{\ell^2(\N)}
\Bigr\|_{L^p}
\\
&\lesssim \|v\|^{\sigma,\ell}_{\delta,X} |w|^{s,p,\ell}_{\delta,X},
\end{align*}
and
\begin{align*}
\|(T_{X\cdot\nabla})^\ell T_v w\|_{W^{s -\ell\delta,p}} &=\Bigl\|
\bigl( 2^{q(s -\ell\delta)} \Delta_q  (T_{X\cdot\nabla})^\ell T_v w
\bigr)_{\ell^2(\N)} \Bigr\|_{L^p}
\\
&\lesssim
\bigl(\|v\|_{L^\infty}+\|v\|^{0,\ell}_{\delta,X}\bigr)\sum_{\la_2\leq\ell}\Bigl\|
\bigl( 2^{q(s-\la_2\delta)}
(T_{X\cdot\nabla})^{\lambda_2}\Delta_{q_1}w \bigr)_{\ell^2(\N)}
\Bigr\|_{L^p}
\\
&\lesssim (\|v\|_{L^\infty}+\|v\|^{0,\ell}_{\delta,X})
|w|^{s,p,\ell}_{\delta,X}.
\end{align*}
This proves \eqref{IV2}.

By the same manner, we have for $s<0$
\begin{align*}
\|(T_{X\cdot\nabla})^\ell T_v w\|_{W^{\sigma+s -\ell\delta,p}}
&=\Bigl\| \bigl( 2^{q(\sigma+s-\ell\delta)} \Delta_q
(T_{X\cdot\nabla})^\ell T_v w \bigr)_{\ell^2(\N)} \Bigr\|_{L^p}
\\
 &\lesssim \|w\|^{\sigma,\ell}_{\delta,X} \sum_{ \lambda_1 \leq \ell}
\Bigl\| \Bigl( \sum_{q_2\leq q+N} 2^{(q-q_2)( s-\lambda_1\delta)}
2^{ q_2 ( s-\lambda_1\delta)} (T_{X\cdot\nabla})^{\lambda_1}
\Delta_{q_2} v \Bigr)_{\ell^2(\N)} \Bigr\|_{L^p}
\\
&\lesssim \|w\|^{\sigma,\ell}_{\delta,X} \sum_{ \lambda_1 \leq \ell}
\Bigl\| \bigl( 2^{ q  ( s-\lambda_1\delta)}
(T_{X\cdot\nabla})^{\lambda_1} \Delta_{q } v \bigr)_{\ell^2(\N)}
\Bigr\|_{L^p}
\\
&\lesssim
 |v |^{s,p,\ell}_{\delta,X}
\|w\|^{\sigma,\ell}_{\delta,X}.
\end{align*}
Notice  that
 when for $s=0,$  one has
$$
\Bigl\| \Bigl( \sum_{q_2\leq q+N} 2^{-(q-q_2)\lambda_1\delta} 2^{
-q_2 \lambda_1\delta} (T_{X\cdot\nabla})^{\lambda_1} \Delta_{q_2} v
\Bigr)_{\ell^2(\N)} \Bigr\|_{L^p} \lesssim \left\{\begin{array}{l}
\|v\|^{0,\ell}_{\delta,X}\ \ \quad\  \ \mbox{if}\ \
\lambda_1>0,\\
C_\epsilon \|v\|_{W^{\epsilon,p}}\ \ \mbox{if}\ \ \lambda_1=0.
\end{array}\right. $$ This yields the second estimate of
(\ref{IV3}).

To handle the estimate of (\ref{IV4}), we write
$$
\Delta_q (T_{X\cdot\nabla})^\ell R(v,w) =\Delta_q \sum_{q_1\geq q-N}
(T_{X\cdot\nabla})^\ell \Delta_{q_1}v\tilde\Delta_{q_1}w,
$$
  from which and (\ref{II1}), we infer
 \begin{align*}
 \bigl| \Delta_q (T_{X\cdot\nabla})^\ell R(v,w) \bigr|
 &\lesssim
 \sum_{q_1\geq q-N}
  M\bigl( (T_{X\cdot\nabla})^\ell (\Delta_{q_1}v\tilde\Delta_{q_1}w) \bigr)
  \\
  &\lesssim
 \sum_{\substack{q_1\geq q-N\\ n,\lambda_1+\lambda_2\leq \ell}}
  2^{q_1\delta(\ell-\lambda_1-\lambda_2)}
  \|(T_{X\cdot\nabla})^{\lambda_1} \Delta_{q_1}v \|_{L^\infty}
  M^{\gamma_n(\la_2)+1}\bigl( (T_{X\cdot\nabla})^{\lambda_2}\tilde\Delta_{q_1}w \bigr)
  \\
  &\lesssim
 \sum_{\substack{q_1\geq q-N\\ n,\lambda_2\leq \ell}}
  2^{-q_1(\sigma_1-(\ell-\lambda_2)\delta )} \|v\|^{\sigma_1,\ell}_{\delta,X}
  M^{\gamma_n(\la_2)+1}\bigl( (T_{X\cdot\nabla})^{\lambda_2}\tilde\Delta_{q_1}w \bigr).
 \end{align*}
Then we get, by applying \eqref{II2}, that
\begin{align*}
&\|(T_{X\cdot\nabla})^\ell
R(v,w)\|_{W^{\sigma_1+\sigma_2-\ell\delta,p}}
\\
&=\Bigl\| \bigl( 2^{q(\sigma_1+\sigma_2-\ell\delta)} \Delta_q
(T_{X\cdot\nabla})^\ell R(v,w) \bigr)_{\ell^2(\N)} \Bigr\|_{L^p}
\\
&\lesssim \|v\|^{\sigma_1,\ell}_{\delta,X} \sum_{n, \lambda_2\leq
\ell} \Bigl\| \Bigl( \sum_{q_1\geq q-N}
2^{(q-q_1)(\sigma_1+\sigma_2-\ell\delta)} 2^{q_1( \sigma_2
-\lambda_2 \delta )} M^{\gamma_n(\la_2)+1}\bigl(
(T_{X\cdot\nabla})^{\lambda_2}\tilde\Delta_{q_1}w
\Bigr)_{\ell^2(\N)} \Bigr\|_{L^p}
\\
&\lesssim \|v\|^{\sigma_1,\ell}_{\delta,X} \sum_{n, \lambda_2\leq
\ell} \Bigl\| \bigl(2^{q( \sigma_2 -\lambda_2 \delta )}
M^{\gamma_n(\la_2)+1}\bigl(
(T_{X\cdot\nabla})^{\lambda_2}\tilde\Delta_{q_1}w
\bigr)_{\ell^2(\N)} \Bigr\|_{L^p}\quad \hbox{if
}\sigma_1+\sigma_2>\ell\delta
\\
&\lesssim \|v\|^{\sigma_1,l}_{\delta,X}
|w|^{\sigma_2,p,\ell}_{\delta,X}.
\end{align*} This proves \eqref{IV4}.

\no$\bullet$\underline{Proof of (\ref{V2})}
 It is easy to observe that (\ref{V2}) holds for $\ell=0.$  Inductively we assume that \eqref{V2} holds for $\ell$ provided   that
 $
 \sigma\in ]\ell\delta,1[$
 or $
 \sigma\in ]\ell\delta-1,1[$
  if $\div X=0.
 $
 We next show that
 \beq\label{Apeq1}
 \|(T_{X\cdot\nabla})^{\ell+1} w\|_{W^{\sigma-(\ell+1)\delta,p}}
 \lesssim | w\|_{\delta,X}^{\s,p,\ell+1} \andf
 \|\d_X^{\ell+1} w\|_{W^{\s-(\ell+1)\delta,p}}
 \lesssim |w|^{\sigma,p,\ell+1}_{\delta,X},
 \eeq
provided that
\begin{equation}\label{cond:sigma}
\sigma \in ](\ell+1)\delta, 1[ \hbox{ or }\sigma\in
](\ell+1)\delta-1,1[ \hbox{ if}\ \ \div X=0.
\end{equation}
It is easy to observe that
\begin{align*}
(T_{X\cdot\nabla})^{\ell+1}w
&=(T_{X\cdot\nabla})^{\ell}(T_{X\cdot\nabla}-\d_X)w
+(T_{X\cdot\nabla})^{\ell}\d_X w.
\end{align*}
And by using Bony's decomposition, one has \beno \begin{split}
(\d_X-T_{X\cdot\nabla})w =& T_{\nabla w}\cdot X+R(X,\nabla w)\\
= & T_{\nabla w}\cdot X+\div R(X, w)-R(\div X,w).
\end{split}
\eeno Due to $\s<1,$ applying \eqref{IV3} yields \beno
\begin{split}
\|(T_{X\cdot\nabla})^{\ell}T_{\nabla w}\cdot
X\|_{W^{\sigma-(\ell+1)\delta,p}} \lesssim & |T_{\nabla w}\cdot
X|^{\sigma-\delta,p,\ell}_{\delta,X}\\
 \lesssim& |\na
w|^{\sigma-1,p,\ell}_{\delta,X}
\|X\|^{1-\delta,\ell}_{\delta,X}\lesssim  |
w|^{\sigma,p,\ell}_{\delta,X} \|X\|^{1-\delta,\ell}_{\delta,X}.
\end{split}
\eeno Similarly whenever, $\s>-1-(\ell+1)\delta,$ applying
\eqref{III2} and \eqref{IV4} leads to \beno
\begin{split}
\|(T_{X\cdot\nabla})^{\ell}\div R(X,
w)\|_{W^{\sigma-(\ell+1)\delta,p}} \lesssim & |\div R(X, w)|^{\sigma-\delta,p,\ell}_{\delta,X}\\
 \lesssim& | R(X, w)|^{1+\sigma-\delta,p,\ell}_{\delta,X}\lesssim  |
w|^{\sigma,p,\ell}_{\delta,X} \|X\|^{1-\delta,\ell}_{\delta,X}.
\end{split}
\eeno The same estimate holds for $(T_{X\cdot\nabla})^{\ell} R(\div
X, w)$ provided that $\s>(\ell+1)\delta.$ So that under the
assumption \eqref{cond:sigma},
 we have
\beq\label{Apeq5}
\|(T_{X\cdot\nabla})^{\ell}(T_{X\cdot\nabla}-\d_X)w\|_{W^{\sigma-(\ell+1)\delta,p}}
\lesssim  | w|^{\sigma,p,\ell}_{\delta,X}
\|X\|^{1-\delta,\ell}_{\delta,X}. \eeq While it follows from
inductive assumption, $| w|^{\sigma,p,\ell}_{\delta,X} \lesssim |
w\|^{\sigma,p,\ell}_{\delta, X},$  that
\begin{align*}
 &\|(T_{X\cdot\nabla})^{\ell}\d_X w\|_{W^{\sigma-(\ell+1)\delta,p}}
\lesssim |\d_X w|^{\sigma-\delta,p,\ell}_{\delta,X} \lesssim |\d_X
w\|^{\sigma-\delta,p,\ell}_{\delta,X} \lesssim |
w\|^{\sigma,p,\ell+1}_{\delta,X}.
\end{align*}
This together with \eqref{Apeq5} proves the first estimate of
\eqref{Apeq1}.

By the same manner, using the  inductive assumptions and
\eqref{Apeq5}, we deduce that  under the condition
\eqref{cond:sigma},
\begin{align*}
\|\d_X^{\ell+1} w\|_{W^{\sigma-(\ell+1)\delta,p}} &\lesssim
\|\d_X^{\ell}(\d_X
w-T_{X\cdot\nabla}w)\|_{W^{\sigma-(\ell+1)\delta,p}}
+\|\d_X^{\ell}T_{X\cdot\nabla}w\|_{W^{\sigma-(\ell+1)\delta,p}}
\\
&\lesssim |\d_X
w-T_{X\cdot\nabla}w|^{\sigma-\delta,p,\ell}_{\delta,X}
+|T_{X\cdot\nabla}w|^{\sigma-\delta,p,\ell}_{\delta,X}
\\
&\lesssim | w|^{\sigma,p,\ell}_{\delta,X}
\|X\|^{1-\delta,\ell}_{\delta,X} +|w|^{\sigma,p,\ell+1}_{\delta,X}
\lesssim |w|^{\sigma,p,\ell+1}_{\delta,X}.
\end{align*} This proves the second estimate of \eqref{Apeq1}.
Thus (\ref{V2}) follows. This completes the proof of Lemma
\ref{lem:prop}.
\end{proof}

\medskip

\noindent {\bf Acknowledgments.} We would like to thank Professor
Jean-Yves Chemin for careful reading and profitable suggestions on
the preliminary version of this paper. Part of this work was done
when we were visiting Morningside Center of the Academy of
Mathematics and Systems Sciences, CAS. We appreciate the hospitality
and the financial support from MCM. P. Zhang is partially supported
by NSF of China under Grant   11371347, the fellowship from Chinese
Academy of Sciences and innovation grant from National Center for
Mathematics and Interdisciplinary Sciences.

\medskip

\end{document}